\theoremstyle{remark}
\newtheorem*{remark}{\bf Remark}
\theoremstyle{plain}
\newtheorem{theorem}{\bf Theorem}[section]
\newtheorem{proposition}[theorem]{\bf Proposition}
\newtheorem{definition}[theorem]{\bf Definition}
\newtheorem{Theorem}{\bf Theorem}
\newtheorem{lemma}[theorem]{\bf Lemma}
\newtheorem{corollary}[theorem]{\bf Corollary}
\def\A{{\mathbb A}}
\def\C{{\mathbb C}}
\def\R{{\mathbb R}}
\def\Z{{\mathbb Z}}
\def\Q{{\mathbb Q}}
\def\D{\mathbb{D}}
\def\KK{\mathbb{K}}
\def\p{\mathbb{P}}
\def\N{{\mathbb N}}
\def\cP{{\mathcal P}}
\def\O{\mathcal{O}}
\def\tC{\tilde{C}}
\def\fX{\mathfrak{X}}
\def\fL{\mathfrak{L}}
\def\um{\underline{m}}
\def\un{\underline{n}}
\def\uw{\underline{w}}
\def\om{\omega}
\def\e{\varepsilon}
\def\a{\alpha}
\def\pe{\textup{ := }}
\def\an{\textup{an}}
\DeclareMathOperator{\Aut}{Aut}
\DeclareMathOperator{\id}{id}
\DeclareMathOperator{\ord}{ord}
\DeclareMathOperator{\reg}{Reg}
\DeclareMathOperator{\sing}{Sing}
\DeclareMathOperator{\Per}{Per}
\DeclareMathOperator{\PCF}{PCF}
\DeclareMathOperator{\TPCF}{TPCF}
\DeclareMathOperator{\area}{Area}
\def\nv{\textup{nv}}
\def\bif{\textup{bif}}
\def\ingram{\textup{ingram}}
\DeclareMathOperator{\poly}{Poly}
\DeclareMathOperator{\spec} {Spec}
\DeclareMathOperator{\card} {Card}
\def\and{{\quad\text{and}\quad}}
\begin{document}

\title{Distribution of postcritically finite polynomials}
\author{C. Favre \and T. Gauthier}
\address{Centre de Math\'ematiques Laurent Schwartz, \'Ecole Polytechnique, 91128 Palaiseau Cedex, France}
\email{favre@math.polytechnique.fr}
\address{LAMFA UMR-CNRS 7352, Universit\'e de Picardie Jules Verne, 33 rue Saint-Leu, 80039 Amiens Cedex 1, France}
\address{Institute for Mathematical Sciences, Stony Brook University, 100 Nicolls Rd, Stony Brook, NY 11794,  USA}
\email{thomas.gauthier@u-picardie.fr}

\thanks{First author is supported by the ERC-starting grant project "Nonarcomp" no.307856}
\maketitle

\begin{abstract}
We prove that  Misiurewicz parameters with prescribed combinatorics and hyperbolic parameters
with $(d-1)$ distinct attracting cycles with given multipliers are equidistributed with respect to the bifurcation measure in the moduli space of degree $d$ complex polynomials. Our proof relies on Yuan's equidistribution results of points of small heights, and uses in a crucial way Epstein's transversality results.
\end{abstract}

\tableofcontents


\section*{Introduction.}
DeMarco \cite{DeMarco1}  proved that, in any holomorphic family of rational maps, the bifurcation locus is the support of a closed positive $(1,1)$-current with continuous potential. In the space $\mathcal{P}_d$  of   complex polynomials (resp. rational maps) of degree $d$ modulo conjugacy by
affine (resp. M\"obius) transformations, this current induces a \emph{bifurcation measure}
which may be seen in many  ways as the analogue of the harmonic measure of the Mandelbrot set when $d\ge3$. This measure was first introduced by Bassanelli and Berteloot \cite{BB1}, and detects maximal bifurcation phenomena. Its support is known to coincide with the closure of strictly postcritically finite parameters (see \cite{favredujardin,buffepstein,Article2}) and with the closure of parameters having a maximal number of neutral cycles by~\cite{BB1}. It is also known to have total Hausdorff dimension~\cite{Article1}.
\par Our main goal  is to prove the equidistribution of postcritically finite parameters to the bifurcation measure in  $\mathcal{P}_d$  for any $d\ge 2$. Our technique builds on Ingram's observation~\cite{Ingram} that $\mathcal{P}_d$  carries a natural height function, and our result ultimately follows from Yuan's equidistribution result of points of small heights~\cite{Yuan}. 
We note that the idea of using Yuan's result for studying problems on parameter spaces of higher dimension also appeared in a recent paper by Ghioca, H'sia and Tucker~\cite{GHT}.

~

\par We  work  with the following  ``orbifold" parameterization of $\cP_d$, see~\cite{BH}. For $(c,a)=(c_1\ldots,c_{d-2},a)\in\C^{d-1}$, we set
\begin{equation}\label{eq:defpoly}
P_{c,a}(z)\pe \frac{1}{d}z^d+\sum_{j=2}^{d-1}(-1)^{d-j}\sigma_{d-j}(c)\frac{z^j}{j}+a^d,
\end{equation}
where $\sigma_j(c)$ is the monic symmetric polynomial in $(c_1,\ldots,c_{d-2})$ of degree $j$.
Observe that the critical points of $P_{c,a}$ are exactly $c_0$, $c_1, \ldots , c_{d-2}$ with the convention that $c_0=0$, and that the canonical projection $\pi:\C^{d-1}\longrightarrow\mathcal{P}_d$  which maps $(c_1,\ldots,c_{d-2},a) \in\C^{d-1}$ to the class of $P_{c,a}$ in $\mathcal{P}_d$ is finite-to-one.

Recall that the \emph{Green function} of a polynomial $P_{c,a}$ is a continuous subharmonic function defined as the following (uniform) limit:
\begin{eqnarray*}
g_{c,a} (z) \pe\lim_{n\rightarrow\infty} d^{-n} \log^+|P^n_{c,a}(z)|~,
\end{eqnarray*}
where $\log^+$ stands for  $\max \{0, \log\}$. The Julia set of $P_{c,a}$ is characterized by the equality $\mathcal{J}(P_{c,a}) = \partial\{ g_{c,a} = 0 \}$.
One can show~\cite[\S 6]{favredujardin} that the function
\begin{center}
$G(c,a)\pe\max\{g_{c,a}(c_0),g_{c,a}(c_1),\ldots,g_{c,a}(c_{d-2})\}$
\end{center}
is a continuous plurisubharmonic (psh) function on $\C^{d-1}$. 
The bifurcation measure is by definition the Monge-Amp\`ere measure of $G$, that is
$\mu_\bif\pe(dd^cG)^{d-1}$. Its support is compact and  
coincides with the Shilov boundary of the connectedness locus $\mathcal{C}_d=\{(c,a)\in\C^{d-1}, \, \mathcal{J}(P_{c,a}) \text{ is connected}\}$. It is contained in the set of parameters at which all critical points bifurcate.

Our first result can be stated as follows.
\begin{Theorem}
For all  $0\leq i\leq d-2$,  pick a sequence of integers $m_{k,i}> n_{k,i}>0$ such that $m_{k,i} - n_{k,i} \rightarrow \infty$ as $k\rightarrow\infty$. 
Consider the probability measure $\mu_k$ that is uniformly distributed over all parameters $(c,a)\in \C^{d-1}$ s.t. $P^{n}(c_i)$ is periodic iff $n\ge n_{k,i}$ and its period
is exactly equal to $m_{k,i} - n_{k,i}$. 

Then the measures $\mu_k$ converge in the weak sense to $\mu_\bif$ as $k\rightarrow\infty$.
\label{maintm1}
\end{Theorem}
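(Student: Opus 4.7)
My strategy follows the blueprint indicated in the abstract: I would couple Ingram's canonical height on $\cP_d$ with Yuan's equidistribution theorem for points of small height, using Epstein's transversality to handle the algebraic geometry of Misiurewicz parameters.

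The first step is to set up the arithmetic machinery. Following Ingram, I define on $\C^{d-1}$ the height
\[
h(c,a) := \sum_{i=0}^{d-2} \hat{h}_{P_{c,a}}(c_i),
\]
where $\hat{h}_{P}$ is the canonical height attached to the polynomial $P$. The crucial point is to show that this height arises from a semi-positively adelically metrized ample line bundle $\mathcal{L}$ on a suitable projective completion $\overline{\cP}_d$ of $\C^{d-1}$. At the archimedean place the local potential is precisely $G(c,a)=\max_{i} g_{c,a}(c_i)$, so that the top archimedean Monge--Amp\`ere measure is $(dd^c G)^{d-1}=\mu_\bif$. At each finite place one must construct analogous continuous semi-positive potentials via the dynamics on the Berkovich line, and verify that the resulting collection forms a bona fide adelic metric.

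Any $(c,a)$ such that every critical point $c_i$ is preperiodic under $P_{c,a}$ satisfies $\hat{h}_{P_{c,a}}(c_i) = 0$ for each $i$, so $h(c,a) = 0$; in particular every point of $\supp\mu_k$ is a $\bar{\Q}$-point of height zero. To apply Yuan's theorem it remains to check Zariski-genericity: no infinite subsequence of orbits in $\supp\mu_k$ may accumulate in a proper Zariski-closed subset of $\overline{\cP}_d$. I would establish this via Epstein's transversality: the hypersurfaces $\{P^{m_{k,i}}_{c,a}(c_i) - P^{n_{k,i}}_{c,a}(c_i) = 0\}$ meet properly for each $k$, the scheme-theoretic intersection is reduced at strict Misiurewicz parameters (so that the points of $\supp\mu_k$ are counted with multiplicity one), and transversality rules out a persistent collapse onto a lower-dimensional algebraic subvariety as $k\to\infty$.

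Once height vanishing, semi-positivity, and genericity are in place, Yuan's equidistribution theorem gives that the Galois-invariant averages of a generic sequence of $\bar{\Q}$-points of height tending to zero converge weakly to the top archimedean Monge--Amp\`ere measure, which is $\mu_\bif$; since $\mu_k$ is by definition uniform on the Galois-invariant finite set $\supp\mu_k$, this yields $\mu_k \to \mu_\bif$. The main technical obstacle will be the construction and verification of the adelic semi-positive metric on $\mathcal{L}$: while at the archimedean place the psh function $G$ is known to be continuous \cite{favredujardin}, the analogous statements at every finite place (existence, continuity, semi-positivity in the sense of Zhang) require a careful treatment of the parameter-dependent Berkovich dynamics and a uniform boundedness argument. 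A secondary subtlety will be making Epstein's transversality effective enough to simultaneously provide reducedness of the Misiurewicz scheme and genericity in the Zariski sense, so that the arithmetic equidistribution can be translated into the stated measure-theoretic convergence.
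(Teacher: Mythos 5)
Your skeleton (adelic semipositive metric, height zero at postcritically finite parameters, Yuan's theorem, transversality for genericity) is the same as the paper's, but two of your steps contain genuine gaps. First, the height you define is Ingram's sum $h_\ingram(c,a)=\sum_i \hat h_{P_{c,a}}(c_i)$, whose archimedean local potential is $\sum_i g_{c,a}(c_i)$, \emph{not} $G=\max_i g_{c,a}(c_i)$ as you assert. This is not a cosmetic slip: the whole point of the paper's construction is that the sum is \emph{not} induced by a continuous metric on $\O(1)\to\p^{d-1}$ (the individual potentials $g_i$ have positive Lelong numbers at $\Gamma_i\cap H_\infty$, so $\sum_i g_i-(d-1)\log^+\max\{|c|,|a|\}$ does not extend continuously to infinity), whereas the max does extend continuously, because near any point of $H_\infty$ at least one $g_i$ is comparable to $\log\max\{|c|,|a|\}$. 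Yuan's theorem requires a continuous semipositive adelic metric, so with the sum height your first step fails; you must replace $h_\ingram$ by $h_\bif=\max$ (one then still has $h_\bif\le h_\ingram\le (d-1)h_\bif$, so the zero sets agree).

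Second, and more seriously, your genericity argument is incomplete. Transversality plus Bezout only yields the \emph{upper} bound $\card(V\cap\TPCF(\um_k,\un_k))=O(d^{|\um_k|-\min_i m_{k,i}})$ for a proper subvariety $V$; by itself this says nothing about the \emph{proportion} of points of $Z_k=\PCF^*(\um_k,\un_k)$ lying on $V$. You need a matching \emph{lower} bound $\card(Z_k)\ge c\,d^{|\um_k|}$, and this cannot be extracted from transversality: the paper obtains it from the combinatorics of critical portraits (Fisher, Bielefeld--Fisher--Hubbard, Kiwi), by counting admissible unlinked tuples of external angles with prescribed preperiod and period. You also need to control the points of $Z_k$ where transversality fails, namely those with degenerate critical points or critical orbit relations $P^l(c_i)=P^{l'}(c_j)$; showing these are $o(d^{|\um_k|})$ again uses the portrait count and is precisely where the hypothesis $m_{k,i}-n_{k,i}\to\infty$ enters --- your proposal never uses this hypothesis. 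Finally, even after all this one only knows that the proportion of $Z_k$ on any fixed $\Q$-divisor tends to $0$; Yuan's theorem demands $Z_k\cap D=\emptyset$ for large $k$, so one must still pass to Galois-stable subsets $Z'_{k,\e}$ with $\card(Z'_{k,\e})\ge(1-\e)\card(Z_k)$ via a diagonal extraction over an enumeration of the $\Q$-divisors, and then let $\e\to0$. None of these steps is routine, and together they constitute the core of the paper's proof.
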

In plain words, this theorem says that strictly postcritically finite polynomials with prescribed combinatorics
are equidistributed with respect to the bifurcation measure. 
Our second result deals with postcritically finite hyperbolic polynomials.
\begin{Theorem}
For each $0\leq i\leq d-2$ choose a sequence of integers $m_{k,i}$ such that $m_{k,i} \rightarrow \infty$ as $k\rightarrow\infty$ with $m_{k,i}\ne m_{k,j}$ for all $i\ne j$ and all $k$. 
Consider the probability measure $\mu'_k$ that is uniformly distributed over all parameters $(c,a)\in \C^{d-1}$ such that $c_1, \ldots c_{d-1}$ are periodic of respective periods exactly $m_{k,1}, \ldots , m_{k,d-1}$ respectively.

Then the measures $\mu'_k$ converge in the weak sense to $\mu_\bif$ as $k\rightarrow\infty$.
\label{maincrittm3}
\end{Theorem}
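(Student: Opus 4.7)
The plan is to prove Theorem \ref{maincrittm3} by applying Yuan's arithmetic equidistribution theorem to the natural adelic semipositive metrized height on a projective compactification $X$ of $\C^{d-1}$ whose archimedean Monge-Amp\`ere measure recovers (a positive multiple of) $\mu_\bif$. Following Ingram, the canonical height $\hat h_i(c,a)\pe \hat h_{P_{c,a}}(c_i)$ of the $i$-th marked critical point extends to a continuous semipositive adelic metric $\overline{L}_i$, whose archimedean local potential is $(c,a)\mapsto g_{c,a}(c_i)$. Setting $\overline L\pe \overline L_0+\cdots+\overline L_{d-2}$, the top archimedean self-intersection of $\overline L$ agrees, up to combinatorial factors, with $\mu_\bif$, via the identification of $\mu_\bif$ as the mixed Monge-Amp\`ere product of the individual critical potentials $T_i\pe dd^c g_\bullet(c_i)$ established in the authors' earlier works.

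Let $Z_k\subset\C^{d-1}$ denote the set-theoretic intersection of the $(d-1)$ hypersurfaces $\{c_i \text{ periodic of exact period } m_{k,i}\}$, so that $\mu'_k$ is the uniform probability measure on $Z_k$. The proof then proceeds in three steps. First, since the periods $m_{k,i}$ are pairwise distinct, at every point of $Z_k$ the $d-1$ critical points lie on $d-1$ disjoint superattracting cycles, and \emph{Epstein's transversality theorem} for superattracting cycles then forces the defining hypersurfaces to intersect properly. Hence $Z_k$ is a $0$-dimensional, $\mathrm{Gal}(\overline{\Q}/\Q)$-invariant complete intersection, and $\mu'_k$ is well defined. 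Second, at each $(c,a)\in Z_k$ every critical point has periodic, hence bounded, orbit, so $g_{c,a}(c_i)=0$ at the archimedean place; the analogous vanishing at non-archimedean places follows from the Call--Silverman theory for preperiodic points. Consequently $\hat h\pe\hat h_0+\cdots+\hat h_{d-2}$ vanishes identically on $Z_k$.

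The delicate step, which I expect to be the main obstacle, is the \emph{genericity} of the sequence $\{Z_k\}$: no subsequence may be contained in a fixed proper algebraic subvariety $V\subsetneq X$. The natural line of attack combines two ingredients: the equidistribution of the normalized periodic hypersurfaces $d^{-m_{k,i}}[\mathrm{Per}_i^*(m_{k,i})]\to T_i$ (with a quantitative speed), and the mixed Monge-Amp\`ere description of $\mu_\bif$ in terms of the $T_i$'s, which is legitimate by Bedford--Taylor theory since each $T_i$ has continuous potential. If an infinite subsequence $Z_{k_j}$ were trapped in $V$, iterated slicing of the wedge product along $V$ together with the convergence of $d^{-m_{k,i}}[\mathrm{Per}_i^*(m_{k,i})]$ would produce a strictly positive $\mu_\bif$-mass on $V$, contradicting the absence of $\mu_\bif$-mass on proper algebraic subvarieties (which holds since $G$ is continuous).

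Granted these three steps, Yuan's arithmetic equidistribution theorem guarantees $\mu'_k\to\mu_\bif$ weakly, which is the conclusion of Theorem \ref{maincrittm3}. Compared to Theorem \ref{maintm1}, the novelty is that the critical points are now strictly periodic rather than strictly preperiodic: this is precisely the regime in which Epstein's transversality statement for superattracting cycles is indispensable, as the periodic hypersurfaces $\mathrm{Per}_i^*(m)$ may otherwise fail to be reduced or to meet properly.
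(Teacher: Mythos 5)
Your overall skeleton (Yuan's theorem, vanishing of the height on the periodic parameters, transversality via Epstein, genericity) is the right one, but two of your steps contain genuine errors. First, the metrization. You assert that each individual canonical height $\hat h_i$, with archimedean potential $(c,a)\mapsto g_{c,a}(c_i)$, ``extends to a continuous semipositive adelic metric $\overline L_i$,'' and you then take the sum (Ingram's height). This is exactly what fails, and it is the central technical point of Sections~\ref{sec:archi} and~\ref{sec:nonarchi}: the current $T_i=dd^c g_{c,a}(c_i)$ has \emph{positive Lelong numbers} at the points of $\Gamma_i\cap H_\infty=\{[c:a:0],\ P_{c,a}(c_i)=0\}$ (Propositions~\ref{propBH} and~\ref{propDF}), so $g_i-\log^+\max\{|c|,|a|\}$ is unbounded below near those points and the induced metric on $\O(1)$ is \emph{not} continuous; the defect persists for the sum. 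The fix is to metrize $\O(1)$ by $G=\max_i g_{c,a}(c_i)$: continuity at infinity then holds because the $P_{c,a}(c_i)$ have no common zero at infinity (Lemma~\ref{lminfinity}), so near any boundary point the max is realized by an index $i$ with $x\notin\Gamma_i$ (Proposition~\ref{propmetric}, and Proposition~\ref{propmetricnon} at finite places). With this choice the archimedean measure $c_1(\overline L)^{d-1}$ is $(dd^cG)^{d-1}=\mu_\bif$ by definition, and no mixed Monge--Amp\`ere identification is needed. As written, your metrized bundle does not satisfy the hypotheses of Theorem~\ref{tmyuan}.

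Second, the genericity step. The condition needed is not ``no subsequence of $Z_k$ is contained in $V$'': Yuan's theorem as stated requires $Z_k\cap D=\emptyset$ for $k$ large, and since this fails here one must run the $\e$-trimming argument from the proof of Theorem~\ref{maintm1}, which in turn requires the \emph{quantitative} estimate $\card(V\cap Z_k)/\card(Z_k)\to 0$ for every proper $V$ defined over $\Q$. Your proposed contradiction by ``iterated slicing'' of $\bigwedge_i d^{-m_{k,i}}[\Per_i^*(m_{k,i})]$ along $V$ would at best exclude $Z_{k_j}\subset V$, which is far weaker, and the slicing itself is unjustified (restriction of currents to $V$ commutes neither with weak limits nor with wedge products). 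The paper's route is Theorem~\ref{thm:ouf}: since the periods $m_{k,i}$ are pairwise distinct, every parameter in $\Per(\um_k)$ has simple critical points, so Epstein transversality (Theorem~\ref{cortransverse2}) applies at \emph{every} such point; Lemma~\ref{lmtransversal} then reduces the count of $V\cap\Per^*(\um_k)$ to isolated intersections with only $q=\dim V<d-1$ of the hypersurfaces, and B\'ezout gives the bound $O(d^{|\um_k|-\min_i m_{k,i}})$, while M\"obius inversion, Lemma~\ref{lm:lwerdeg} and B\'ezout give the lower bound $\card(\Per^*(\um_k))\ge(1-d^{-1})^{d-1}d^{|\um_k|}$. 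Without an argument of this quantitative type your proof does not close.
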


For quadratic polynomials, this result goes back to Levin~\cite{Levin1} (see also \cite{Levin4}). A proof of Levin's result based on a one-dimensional equidistribution result of points of small height was later given by Baker and H'sia~\cite{baker-hsia}.
Estimates for the speed of convergence of $\mu'_k$ to $\mu_\bif$ were then obtained by the first author and Rivera-Letelier~\cite{FRL} using the idea of Baker and H'sia.

\medskip

The set of parameters $(c,a)\in\C^{d-1}$ for which $P_{c,a}$ has a periodic point of exact period $n$ and multiplier $w$ is an algebraic hypersurface of $\C^{d-1}$ which we denote by $\Per^*(n,w)$. Bassanelli and Berteloot  have  studied the distribution of the hypersurfaces $\Per^*(n,w)$ for a fixed $w$, as $n\to\infty$. In particular, they proved in \cite{BB2} that for all $w$ in the closed unit disk, the currents $d^{-n}[\Per^*(n,w)]$ converge to the bifurcation current as $n\to\infty$. In~\cite{BB3}, they also proved that for any $r>0$ the measures
$$\frac{d^{-(k_1(n)+\cdots+k_{d-1}(n))}}{(d-1)!(2\pi)^{d-1}} \int_{[0,2\pi]^{d-1}}\bigwedge_{j=1}^{d-1}[\Per^* (k_j(n),r e^{i\theta_j})]\, d\theta_1\cdots d\theta_{d-1}$$
converge to the bifurcation measure for a suitable choice of increasing functions $k_j:\N\to\N$ (compare with \cite{bsurvey}). Inspired by the seminal work of Briend and Duval~\cite{briendduval2} on the 
construction of the measure of maximal entropy for endomorphisms of the complex projective space, we derive from   Theorem~\ref{maincrittm3} an equidistribution result for the sets
$$
\Per^*(\um,\uw)\pe\bigcap_{i=1}^{d-1}\Per^*(m_i,w_i)~,
$$
when $|w_i|< 1$ for all $i$. Observe that the union of these sets over all $\um$ is known to contain the support of the measure, see~\cite[Theorem~5.2.9]{bsurvey}.
Namely we prove 
\begin{Theorem}
For each $1\leq i\leq d-1$  pick $w_i$ in the open unit disk, and choose a sequence of integers $m_{k,i}$ such that $m_{k,i} \rightarrow \infty$ as $k\rightarrow\infty$. Assume in addition that $m_{k,i}\neq m_{k,j}$ for all $k$ and all $i\neq j$. Then the set $Per^*(\um_k,\uw_k)$ is finite, and the probability measure $\mu''_k$ that is uniformly distributed over this set is well-defined. 

Moreover the sequence $\mu''_k$ converges in the weak sense to $\mu_\bif$ as $k\rightarrow\infty$.
\label{maintm3}
\end{Theorem}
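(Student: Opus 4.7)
I would deduce Theorem~\ref{maintm3} from Theorem~\ref{maincrittm3} by showing that each point of $\Per^*(\um_k,\uw_k)$ sits in a hyperbolic component whose center is a parameter appearing in the support of $\mu'_k$, and then that the two parameters are close enough for the resulting probability measures to share the same limit.

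Fix $(c,a)\in\Per^*(\um_k,\uw_k)$. Since $|w_{k,i}|<1$ for every $i$ and the periods $m_{k,i}$ are pairwise distinct, $P_{c,a}$ has $d-1$ distinct attracting cycles. Fatou's theorem on attracting basins forces each of them to contain a critical point, and since $P_{c,a}$ has exactly $d-1$ critical points a pigeonhole argument shows that each basin contains exactly one. Hence $(c,a)$ belongs to a uniquely determined hyperbolic component $\Omega\subset\mathcal{C}_d$ of period vector $\um_k$; in particular $\Per^*(\um_k,\uw_k)$ is contained in the compact connectedness locus. Finiteness of this intersection follows from the properness of the intersection of the hypersurfaces $\Per^*(m_{k,i},w_{k,i})$ in $\C^{d-1}$, which is guaranteed by Epstein's transversality.

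On each hyperbolic component $\Omega$ of type $\um_k$, the $(d-1)$-tuple of multipliers of the attracting cycles yields a proper holomorphic map $\rho_\Omega\colon\Omega\to\D^{d-1}$ with $\rho_\Omega^{-1}(\underline{0})=\{c_\Omega\}$, where $c_\Omega$ is the PCF hyperbolic center appearing in $\mu'_k$. One then has
$$
\Per^*(\um_k,\uw_k)=\bigsqcup_{\Omega\in\mathcal{H}_k}\rho_\Omega^{-1}(\uw_k),
$$
with $\mathcal{H}_k$ the set of hyperbolic components of type $\um_k$ (which indexes the support of $\mu'_k$). Each point of $\rho_\Omega^{-1}(\uw_k)$ lies in $\Omega$, so its Euclidean distance to $c_\Omega$ is at most $\mathrm{diam}(\Omega)$. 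The crucial technical step is to establish that $\max_{\Omega\in\mathcal{H}_k}\mathrm{diam}(\Omega)\to 0$ as $k\to\infty$; I would attempt this by bounding $\|D\rho_\Omega(c_\Omega)\|$ from below in terms of the periods $m_{k,i}$, using the chain rule along the superattracting cycles at $c_\Omega$, and then combining with a Cauchy-type estimate on $\Omega$ coming from the boundedness of the target $\D^{d-1}$.

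Given these ingredients, for any Lipschitz test function $\varphi$ on $\C^{d-1}$ one obtains
$$
\Bigl|\int\varphi\,d\mu''_k-\int\varphi\,d\mu'_k\Bigr|\le \mathrm{Lip}(\varphi)\,\max_{\Omega\in\mathcal{H}_k}\mathrm{diam}(\Omega)+o(1)\longrightarrow 0,
$$
modulo a uniformly bounded correction coming from $\deg\rho_\Omega$ (or exploiting that this degree is $1$ for generic hyperbolic components), and then Theorem~\ref{maincrittm3} gives $\mu''_k\to\mu_\bif$. The main obstacle is the quantitative shrinking of hyperbolic components with fixed period vector, which plays here the role of the ``separation of preimages under iteration'' that underlies the Briend--Duval construction, and a careful analysis of the multiplier map near centers appears to be the delicate point.
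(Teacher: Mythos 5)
Your global picture --- locate each point of $\Per^*(\um_k,\uw_k)$ inside a hyperbolic component, parameterize that component by the multipliers of its $d-1$ attracting cycles, and compare with the center --- is the same as the paper's. But the step you yourself flag as crucial, namely $\max_{\Omega\in\mathcal{H}_k}\mathrm{diam}(\Omega)\to 0$, is both unproved by your sketch and stronger than what the theorem needs. A lower bound on $\|D\rho_\Omega(c_\Omega)\|$ at the single point $c_\Omega$ does not control $\mathrm{diam}(\Omega)$: you would need to bound $\|D\rho_\Omega^{-1}\|$ uniformly over all of $\D^{d-1}$, and Cauchy estimates degenerate at the boundary of the polydisk. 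Moreover the ``chain rule along the superattracting cycle'' produces dynamical derivatives (which vanish, the multiplier at the center being $0$), not the parameter-derivative of the multiplier map, so it yields no lower bound. Uniform shrinking of \emph{all} components with a given period vector is a local-connectivity-flavoured statement that is not accessible by these soft means, and the paper never proves it.

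What the paper does instead is control \emph{most} components by an averaged area argument, which is exactly what weak convergence requires. It first treats $\uw=\underline{0}$ by summing Theorem~\ref{maincrittm3} over permutations of the critical points (note that $\Per^*(\um_k)\neq\Per^*(\um_k,\underline{0})$: your centers lie in the latter set, i.e.\ in the support of the measure of Corollary~\ref{corcvPern}, not of $\mu'_k$), and then moves one multiplier coordinate at a time from $\uw[j]$ to $\uw[j+1]$. Freezing all coordinates except the $j$-th places the moving points on an algebraic curve $C_{k,j}$ of degree $\prod_{l\neq j}\bar{M}_d(m_{k,l})$; the disks $\phi_{c,a}(\D(0,|w_j|^{-1/2}))$ are pairwise disjoint and contained in $C_{k,j}$, so their total area is at most $\deg C_{k,j}$, which is smaller than $\card\,\Per^*(\um_k,\uw[j])$ by the factor $\bar{M}_d(m_{k,j})^{-1}\to 0$. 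Hence all but an $\e$-fraction of the disks have area $\le\e^2$, and the Briend--Duval length-area lemma converts small area into small diameter for those disks only. This Bezout count is unavailable if you move all $d-1$ multipliers at once, which is why the coordinate-by-coordinate decomposition is essential. Finally, you would need the full strength of Theorem~\ref{tm:parametrize} (the multiplier map is a biholomorphism onto $\D^{d-1}$, proved by quasiconformal surgery), not ``degree one for generic components'': a degree greater than one on some components would destroy the bijection between $\Per^*(\um_k,\uw[j])$ and $\Per^*(\um_k,\uw[j+1])$ on which the comparison of the two counting measures rests.
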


R.~Dujardin and the first author obtained in~\cite[Theorem~5]{favredujardin}
the existence of a sequence of atomic measures supported on strictly postcritically finite parameters (or critically finite hyperbolic polynomials)  and converging to the bifurcation measure. 
Theorems \ref{maintm1} and \ref{maincrittm3} are strengthening of these statements.

\medskip

As mentioned above the proofs of Theorems~\ref{maintm1} and~\ref{maincrittm3} relie on Yuan's equidistribution theorem of points of small height. 

The first problem is to construct a  height on the space of  
polynomials of degree $d$ to which Yuan's result can be applied. In technical terms one needs to
prove that the height is associated with a so-called continuous \emph{adelic semi-positive}
metrized line bundle. 
To any polynomial $P_{c,a}$ defined over $\bar{\Q}$ is associated a natural height function $h_{c,a} : \bar{\Q} \to \R_+$. A first natural height on the space of polynomials is defined by the formula $ \sum_{i=0}^{d-2} h_{c,a}(c_i)$. This height was used by P.~Ingram in op. cit., but it is not associated with a continuous metrization.
Our first observation is that the slightly modified height $ \max_{0\le i \le d-2} h_{c,a}(c_i)$ is induced by a continuous adelic semipositive metric, see Sections~\ref{sec:archi} and~\ref{sec:nonarchi}. Our estimates are very close in spirit to the ones given in~\cite{GHT}.

We then need to overcome a second issue.
Yuan's result only applies to sequences of finite collections of points $Z_k$ that are \emph{generic} in the following sense. For  any proper subvariety $V$ the proportion of points of $Z_k$ lying in $V$ is negligible, or in other words   $\lim_k \card (Z_k\cap V) / \card (Z_k)  = 0$.
Checking this condition for postcritically finite maps constitutes the core of our analysis, and occupies most of Section~\ref{sec:distrib}. To do so we rely on  transversality results describing how the hypersurfaces of parameters where one critical point is preperiodic intersect in the parameter space of polynomials with marked critical points. We thus use in an essential way the key contributions of A.~Epstein as exposed in~\cite{epstein2,buffepstein}.  In Section~\ref{sec:trans} we explain how to adapt his arguments to our situation.

We note that the recent work of Baker and DeMarco~\cite{BD}  deals with the (much) more delicate problem of characterizing those positive dimensional subvarieties $V$ in the parameter space such that $ \card (Z_k\cap V) \to \infty$. We shall not rely on their result.

Another ingredient also appears in the course of the proof of Theorem~\ref{maintm1}. Namely, 
our counting of strictly post-critically finite maps is based on the notion of critical portrait that was introduced by Fisher~\cite{Fisher} and on the continuity result of Bielefeld, Fisher and Hubbard~\cite{BFH} and Kiwi~\cite{kiwi-portrait}.

\medskip

We observe that Yuan's Theorem also yields equidistribution result at finite places. 
For any prime $p>0$, one can replace $\C$ by $\C_p$ in the statements of Theorems~\ref{maintm1} and~\ref{maincrittm3}, the completion of the algebraic closure of $\Q_p$. The corresponding atomic measures $\mu_k, \mu'_k$ are now
supported on the analytic Berkovich space associated with $\A_{\C_p}^{d-1}$, and converges
to the same probability measure $\mu_{\bif,p}$.

~

\par Our approach relies in an essential way on the compactness of the support of $\mu_\bif$ in the space of polynomials. This property is not satisfied by the support of the bifurcation measure in the space of rational maps even in degree $2$, see~\cite[Proposition 5.1]{Mod2}. The equidistribution of postcritically finite parameters in the context of rational maps is thus widely open. There is an important literature on  bifurcations of rational maps, and we refer to~\cite{bsurvey,dsurvey,McMullen} for more informations on this subject.

\medskip

\noindent {\bf Acknowledgements:} we heartfully thank X. Buff for his crucial help in the understanding of the transversality results of A.~Epstein. 


\section{At the archimedean place.}\label{sec:archi}


\subsection{Basics.}
Pick any $(c,a) \in \C^{d-1}$ and consider the degree $d$ polynomial $P_{c,a}$ as in the introduction. Recall that $g_{c,a}(z)=0$ if and only if the forward orbit $\{P^n_{c,a}(z)\}_{n\ge0}$ is bounded. The Julia set $\mathcal{J}_{c,a}$ of $P_{c,a}$ coincides with the boundary of the locus $\{g_{c,a}=0\}$. Moreover $P_{c,a}$ has a connected Julia set if and only if all its critical points have bounded forward orbits i.e. if and only if $G(c,a) \pe\max\{g_{c,a}(c_0),g_{c,a}(c_1),\ldots,g_{c,a}(c_{d-2})\}=0$. 

 Recall that a polynomial $P_{c,a}$ is \emph{Misiurewicz} if all critical points $c_0,\ldots,c_{d-2}$ are preperiodic to repelling cycles. It is \emph{hyperbolic} if all critical points eventually lands on the basin of attraction of an attracting (or super-attracting) cycle.

We shall use the following two results that are proved in~\cite[\S 4]{BB2}.
\begin{lemma}
The polynomials $P_{c,a}(c_i)\in\mathbb{Q}[c_1,\ldots,c_{d-2},a]$ are homogeneous polynomials of degree $d$ for $0\leq i\leq d-2$ with no other common roots than $(0,\ldots,0)$.
\label{lminfinity}
\end{lemma}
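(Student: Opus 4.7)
The plan is to verify homogeneity by a term-by-term degree count, and then to identify the common zero locus by a rigidity argument showing that a common zero forces $P_{c,0}$ to be the monomial $z^d/d$.

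For homogeneity: since $c_0 = 0$, I have $P_{c,a}(c_0) = a^d$, which is homogeneous of degree $d$; for $i\ge 1$, each summand of
\[
P_{c,a}(c_i) = \frac{1}{d}c_i^d + \sum_{j=2}^{d-1}(-1)^{d-j}\sigma_{d-j}(c)\frac{c_i^j}{j} + a^d
\]
is homogeneous of degree $d$ in $(c_1,\ldots,c_{d-2},a)$ because $\sigma_{d-j}(c)\,c_i^j$ has total degree $(d-j)+j = d$.

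For the non-vanishing statement, suppose $(c,a)$ is a common zero. The identity $P_{c,a}(c_0) = a^d$ immediately yields $a=0$. Now $c_0, c_1, \ldots, c_{d-2}$ are precisely the critical points of $P_{c,0}$ (this follows from the definition of the parametrization via $P'_{c,a}(z) = \prod_{i=0}^{d-2}(z-c_i)$, checked by matching elementary symmetric polynomial coefficients), so the assumption forces every critical value of $P_{c,0}$ to vanish.

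The key step is then a rigidity fact: a degree $d$ polynomial all of whose critical values agree must be of the form $\alpha(z-z_0)^d$. Writing $P_{c,0}(z) = \alpha\prod_k(z-\beta_k)^{m_k}$ with distinct roots $\beta_k$, the hypothesis says every root of $P'_{c,0}$ must be a multiple root of $P_{c,0}$, so $\sum_{m_k\ge 2}(m_k-1) = d-1$; combined with $\sum_k m_k = d$ this leaves only the possibility of a single root of multiplicity $d$. Applying this to $P_{c,0}$ and using $P_{c,0}(0) = P'_{c,0}(0) = 0$ to pin down $z_0 = 0$ gives $P_{c,0}(z) = \frac{1}{d}z^d$. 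Matching this with the defining formula forces $\sigma_k(c) = 0$ for every $k = 1,\ldots,d-2$, hence $c_1 = \cdots = c_{d-2} = 0$ as required. I expect the rigidity identification of $P_{c,0}$ as a pure $d$-th power to be the only mildly subtle step; the rest is elementary algebra.
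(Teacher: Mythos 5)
Your proof is correct. Note that the paper itself does not prove this lemma: it is quoted as one of the "two results that are proved in [BB2, \S 4]" (Bassanelli--Berteloot), so there is no in-paper argument to compare against. Your two steps are both sound: the homogeneity count $\deg\bigl(\sigma_{d-j}(c)\,c_i^j\bigr)=(d-j)+j=d$ is immediate from the defining formula, and the rigidity step is a correct instance of the Riemann--Hurwitz count for polynomials. Indeed, once $a=0$ and all critical values vanish, every root of $P'_{c,0}$ is a multiple root of $P_{c,0}$; writing $d-1=\sum_{m_k\ge 2}(m_k-1)=\sum_k(m_k-1)=d-K$ (where $K$ is the number of distinct roots) forces $K=1$, and $P_{c,0}(0)=0$ then pins the unique root at the origin, so $P_{c,0}(z)=\tfrac1d z^d$ and $\sigma_1(c)=\cdots=\sigma_{d-2}(c)=0$, whence $c=0$. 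This gives a clean, self-contained replacement for the citation.
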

For simplicity we write $|c| \pe \max_{1\le i \le d-2} |c_i|$.
\begin{proposition}
One can write
\begin{eqnarray*}
G(c,a)=\log^+\max\left\{|c|,|a|\right\}+O(1)~.
\end{eqnarray*}
Moreover, for any $0\le i \le d-2$, the closure of the set
$\mathcal{B}_i\pe\{(c,a)\in \A_\C^{d-1}, \,  g_{c,a}(c_i)=0\}$ in $\p^{d-1}_\C$ is equal to $\mathcal{B}_i\cup\{[c:a:0]\in\p^{d-1}_\C, \, P_{c,a}(c_i)=0\}$.
\label{propBH}
\end{proposition}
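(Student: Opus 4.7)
The plan is to establish the two assertions in turn, using as main inputs the homogeneity of the polynomials $P_{c,a}(c_i)$ recorded in Lemma~\ref{lminfinity} and a uniform expansion of the Green function $g_{c,a}$ at points lying far above the critical values. The key preparatory estimate is: there exist absolute constants $A,C>0$ such that
\[ \bigl|g_{c,a}(z)-\log|z|\bigr|\le C\quad\text{whenever}\quad |z|\ge A\max(\|(c,a)\|,1), \]
where I write $\|(c,a)\|:=\max(|c|,|a|)$. This will follow from the asymptotic $P_{c,a}(z)=d^{-1}z^d(1+O(\|(c,a)\|/|z|))$ as $|z|/\|(c,a)\|\to\infty$, which forces the forward orbit of such a $z$ to escape at geometric rate and lets one sum the telescoping series
\[ g_{c,a}(z)-\log|z|=\sum_{k\ge 0}d^{-k-1}\bigl(\log|P^{k+1}_{c,a}(z)|-d\log|P^k_{c,a}(z)|\bigr). \]

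For Part~1, compactness of the unit sphere applied to the common-root-free family $\{P_{c,a}(c_i)\}_i$ of homogeneous polynomials of degree $d$ yields a constant $C_1>0$ with $\max_i|P_{c,a}(c_i)|\ge C_1\|(c,a)\|^d$. Picking $i_\ast$ realizing this maximum, $|P_{c,a}(c_{i_\ast})|$ dominates $\|(c,a)\|$ for large parameters, so the preparatory estimate applies at that point and delivers
\[ G(c,a)\ge g_{c,a}(c_{i_\ast})=\tfrac{1}{d}g_{c,a}\bigl(P_{c,a}(c_{i_\ast})\bigr)=\tfrac{1}{d}\log|P_{c,a}(c_{i_\ast})|+O(1)\ge\log^+\|(c,a)\|+O(1). \]
The matching upper bound comes from iterating $g_{c,a}(c_i)=d^{-n}g_{c,a}(P^n_{c,a}(c_i))$: either the orbit stays bounded and $g$ vanishes, or one takes $n$ minimal with $|P^n_{c,a}(c_i)|\ge A\|(c,a)\|$ and applies the preparatory estimate, using the trivial degree-$d$ bound $|P^n_{c,a}(c_i)|\le K\|(c,a)\|^d$ at the jump to conclude.

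For the inclusion $\subseteq$ of Part~2, if $(c_n,a_n)\in\mathcal{B}_i$ converges to $[c_0:a_0:0]$ with $t_n:=\|(c_n,a_n)\|\to\infty$, the scaling identity $P_{tc,ta}(tz)=t^d P_{c,a}(z)$ (a direct consequence of~\eqref{eq:defpoly}) gives
\[ P_{c_n,a_n}((c_n)_i)=t_n^d\,P_{\tilde c_n,\tilde a_n}((\tilde c_n)_i)\sim t_n^dP_{c_0,a_0}(c_{0,i}), \]
where $(\tilde c_n,\tilde a_n):=(c_n,a_n)/t_n\to(c_0,a_0)$. If $P_{c_0,a_0}(c_{0,i})\ne 0$ this dominates $t_n$, so the preparatory estimate forces $g_{c_n,a_n}((c_n)_i)=\log t_n+O(1)\to\infty$, contradicting $(c_n,a_n)\in\mathcal{B}_i$. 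Hence any limit at infinity of $\mathcal{B}_i$ must satisfy $P_{c_0,a_0}(c_{0,i})=0$.

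For the reverse inclusion $\supseteq$, my plan is to exhibit an explicit algebraic subfamily of $\mathcal{B}_i$ whose projective closure covers $Z_i:=\{[c:a:0]:P_{c,a}(c_i)=0\}$. I would take the affine hypersurface $W:=\{(c,a)\in\A^{d-1}:P_{c,a}(c_i)=c_i\}$: on $W$ the $i$-th critical point is a fixed point of $P_{c,a}$, so its forward orbit is trivially bounded and $W\subseteq\mathcal{B}_i$. Since $P_{c,a}(c_i)$ is homogeneous of degree $d$ in $(c,a)$ while $c_i$ has degree $1$, the homogenization of $F(c,a):=P_{c,a}(c_i)-c_i$ with auxiliary variable $s$ computes to $\tilde F(c,a,s)=P_{c,a}(c_i)-s^{d-1}c_i$, whose restriction to the hyperplane at infinity $\{s=0\}$ is exactly $P_{c,a}(c_i)$. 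The projective closure $\overline{W}$ therefore meets $\{s=0\}$ precisely in $Z_i$, and since Zariski and Euclidean closures of a complex algebraic set coincide, every point of $Z_i$ is a Euclidean limit of points of $W\subseteq\mathcal{B}_i$, which completes the closure. The main obstacle throughout is making the Green-function expansion genuinely uniform in $(c,a)$ with explicit constants; once this is in hand, the rest reduces to the compactness argument based on Lemma~\ref{lminfinity} and direct algebraic manipulation.
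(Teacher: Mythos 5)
The paper does not actually prove Proposition~\ref{propBH}: it is quoted from \cite[\S 4]{BB2}. So your proposal has to be judged on its own, and it is correct and essentially self-contained. Your preparatory estimate is the standard uniform B\"ottcher-type expansion $\bigl|g_{c,a}(z)-\log|z|\bigr|\le C$ on $\{|z|\ge A\max(\|(c,a)\|,1)\}$, and the telescoping series does give it with constants independent of $(c,a)$ because each increment $\log|P_{c,a}^{k+1}(z)|-d\log|P_{c,a}^{k}(z)|$ is uniformly bounded on the (forward-invariant) escape region. The lower bound for $G$ via compactness of the sphere and the homogeneity in Lemma~\ref{lminfinity}, the upper bound via the first escape time $n\ge 1$, and the inclusion $\subseteq$ at infinity via the rescaling identity $P_{tc,ta}(tz)=t^dP_{c,a}(z)$ are all sound; this is the same circle of ideas as in \cite{BB2}. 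The reverse inclusion via the hypersurface $W=\{P_{c,a}(c_i)=c_i\}$ of fixed critical points is a clean algebraic shortcut. Two small points deserve one line each to make the write-up airtight: (i) $\mathcal{B}_i$ is closed in $\A_\C^{d-1}$ because $(c,a)\mapsto g_{c,a}(c_i)$ is continuous, so the closure in $\p^{d-1}_\C$ contributes no new affine points; and (ii) to pass from ``the homogenization restricts on $\{s=0\}$ to $P_{c,a}(c_i)$'' to ``$\overline{W}\cap H_\infty$ equals (not merely is contained in) $Z_i$'' you should note that $s$ does not divide $\tilde F$, so by a dimension count no irreducible component of the hypersurface $V(\tilde F)$ lies inside $H_\infty$, whence $V(\tilde F)$ is indeed the Zariski (hence Euclidean) closure of $W$. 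Neither remark is a gap, only a missing sentence.
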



\subsection{The  bifurcation measure.}

Let $\omega$ be the Fubini-Study form on $\p^{d-1}_\C$, defined  by $\om = \frac12 dd^c \log (1 + |a|^2 + \sum_1^{d-1} |c_i|^2)$ in $\A^{d-1}_\C$ and  normalized in such a way that $\int_{\p^{d-1}_\C}\omega^{d-1}=1$. The mass of a positive closed current $T$ of bidegree $(p,p)$ on $\p^{d-1}_\C$ is given by
\begin{eqnarray*}
\|T\|\pe\int_{\p_\C^{d-1}}T\wedge\omega^{d-1-p}.
\end{eqnarray*}
Pick any psh function $u: \mathbb{A}_\C^{d-1}\to [-\infty, +\infty)$ such that $ u (c,a ) \le \log^+\max\{|c|,|a|\} + O(1)$. Then one can show that the  current $T= dd^c u$ extends uniquely to  a  positive closed $(1,1)$ current on $\p^{d-1}_\C$ that does not charge the hyperplane at infinity $H_\infty := \p^{d-1}_\C \setminus \A_\C^{d-1}$, and satisfies $\| T\| = 1$. One can also show that if $H\subset\p^{d-1}$ is an algebraic hypersurface, then $\|[H]\|=\deg(H)$.

~

\par For any  $0\leq i\leq d-2$, the function $g_i(c,a) := g_{c,a}(c_i)$ is continuous and psh in $\A_\C^{d-1}$, and satisfies the upper bound above. In particular  the positive closed $(1,1)$ current $T_i:= dd^c g_i$ extends to $\p^{d-1}_\C$. From this discussion and Proposition~\ref{propBH} (see also~\cite[\S 6]{favredujardin}) we get
\begin{proposition}
For any integer $0\le i \le d-2$,  $T_i$ extends to a positive closed $(1,1)$-current on  $\p^{d-1}_\C$ of mass $1$. Its support $\Gamma_i$ is the closure in $\p^{d-1}_\C$ of $\partial\{g_{c,a}(c_i)=0\}\subset \A_\C^{d-1}$, and  
\begin{center}
$\Gamma_i \cap H_\infty = \{[c:a:0]\in\p^{d-1}_\C, \,  P_{c,a}(c_i)=0\}~.$
\end{center}
\label{propDF}
\end{proposition}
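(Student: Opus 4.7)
My plan is to verify the three assertions of the proposition in turn: the extension of $T_i$ with mass $1$, the identification of $\mathrm{supp}(T_i)\cap\A_\C^{d-1}$, and the description of the trace of $\Gamma_i$ on $H_\infty$.

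\textbf{Extension.} Since $g_i \le G \le \log^+\max\{|c|,|a|\}+O(1)$, the extension criterion stated in the previous paragraph (applied to the psh function $u = g_i$) gives that $T_i = dd^c g_i$ extends uniquely to a positive closed $(1,1)$-current on $\p_\C^{d-1}$ of mass $1$ not charging $H_\infty$. This step is essentially immediate.

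\textbf{Support in the affine chart.} Next I plan to show $\mathrm{supp}(T_i)\cap\A_\C^{d-1}=\partial\{g_i=0\}$. On $\{g_i>0\}$ the function $g_i$ is the locally uniform limit of the pluriharmonic functions $d^{-n}\log|P_{c,a}^n(c_i)|$ (since the polynomial $P_{c,a}^n(c_i)$ is non-vanishing there once $n$ is large enough), hence is pluriharmonic; on the interior of $\{g_i=0\}$ it is identically $0$. In both cases $T_i$ vanishes. Conversely, at a point $(c_0,a_0)\in\partial\{g_i=0\}$, any ball $U$ contains points where $g_i>0$. If $T_i$ vanished on $U$ then $g_i$ would be pluriharmonic on $U$; restricting to a complex line through $(c_0,a_0)$ meeting $\{g_i>0\}$ inside $U$ contradicts the minimum principle for harmonic functions, because $g_i\ge0$ achieves the value $0$ at $(c_0,a_0)$ without being identically zero on that line.

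\textbf{Behavior at infinity.} The set $\Gamma_i$ is then the closure of $\partial\{g_i=0\}$ in $\p_\C^{d-1}$. Since $\partial\{g_i=0\}\subset\mathcal{B}_i$, Proposition~\ref{propBH} gives immediately
\[
\Gamma_i\cap H_\infty\subset\{[c:a:0]\in\p_\C^{d-1},\,P_{c,a}(c_i)=0\}.
\]
For the reverse inclusion, fix $p=[c_0:a_0:0]$ with $P_{c_0,a_0}(c_i)=0$ and an arbitrary connected neighborhood $V$ of $p$ in $\p_\C^{d-1}$. By Proposition~\ref{propBH} again, $V\cap\mathcal{B}_i\ne\emptyset$, so $V\cap\A_\C^{d-1}$ contains a point with $g_i=0$. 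On the other hand Lemma~\ref{lminfinity} ensures that $\{P_{c,a}(c_i)=0\}\cap H_\infty$ is a proper algebraic hypersurface of $H_\infty$, so $V\cap H_\infty$ contains a point $q$ with $P_{c,a}(c_i)\ne0$; a small enough neighborhood of $q$ stays in the open complement of $\overline{\mathcal{B}_i}^{\,\p_\C^{d-1}}$, providing points of $V\cap\A_\C^{d-1}$ with $g_i>0$. As $V\cap\A_\C^{d-1}=V\setminus H_\infty$ is obtained by removing a complex codimension-one subvariety, it remains connected; joining the two points by a path and invoking continuity of $g_i$, we find a point of $\partial\{g_i=0\}$ inside $V$. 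Hence $p\in\Gamma_i$.

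\textbf{Expected obstacle.} Steps 1 and 2 are standard pluripotential calculations. The delicate step is the last one: one must combine Proposition~\ref{propBH} (which only gives the closure of $\mathcal{B}_i$, a set that typically has non-empty interior when $c_i$ is trapped by an attracting cycle) with the fact that $\{P_{c,a}(c_i)=0\}$ is a proper hypersurface of $H_\infty$ to produce genuine boundary points accumulating on the locus at infinity. The topological argument using the connectedness of $V\setminus H_\infty$ is what makes this work.
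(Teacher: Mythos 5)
Your argument is correct and follows exactly the route the paper intends: the extension and mass statement come from the criterion recalled in the preceding paragraph, the affine description of the support is the standard pluriharmonicity/minimum-principle computation, and the trace on $H_\infty$ is extracted from Proposition~\ref{propBH} together with Lemma~\ref{lminfinity}. Your connectedness argument for the reverse inclusion at infinity is precisely the detail the paper leaves to the reference \cite[\S 6]{favredujardin}; the only point you gloss over is that $\Gamma_i$ has no extra piece inside $H_\infty$ itself, which follows since $T_i$ does not charge $H_\infty$ and a positive closed $(1,1)$-current supported in a hypersurface is a multiple of its integration current.
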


Observe that $T_i$ does not have locally bounded potentials at points on $\Gamma_i\cap H_\infty$, and indeed admits positive Lelong numbers there.
However it admits continuous potential elsewhere and we may thus consider  any intersection product of the form $T_{i_1}\wedge \ldots \wedge T_{i_j}$ in the sense of pluripotential theory, see e.g.~\cite{BedfordTaylor}.

\begin{proposition}
For any integer $0\le i \le d-1$, we have $T_i\wedge T_i = 0$, and $\mu_\bif = T_0 \wedge\ldots\wedge T_{d-2} = T_\bif^{d-1}$ where $T_\bif = \frac{1}{d-1}\big(T_0+\cdots+T_{d-2}\big)$.
\end{proposition}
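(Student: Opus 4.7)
My plan is to prove the vanishing $T_i \wedge T_i = 0$ first, and deduce both remaining identities from it.

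For the vanishing: the function $g_i(c,a) = g_{c,a}(c_i)$ is continuous and psh on $\A_\C^{d-1}$, and I claim it is pluriharmonic on the open set $\{g_i > 0\}$. Indeed, on any relatively compact part of this set the iterates $|P_{c,a}^n(c_i)|$ tend uniformly to $+\infty$, so for $n$ large enough the function $d^{-n}\log|P_{c,a}^n(c_i)|$ is pluriharmonic and converges uniformly to $g_i$. Hence $T_i = dd^c g_i$ is supported on the closed set $\{g_i = 0\}$; since $g_i$ is continuous, the current $g_i \cdot T_i$ is well-defined in the Bedford--Taylor sense and vanishes on $\supp T_i$. Using that $T_i$ is closed,
$$
T_i \wedge T_i \;=\; dd^c g_i \wedge T_i \;=\; dd^c(g_i \cdot T_i) \;=\; 0.
$$
Expanding $(T_0+\cdots+T_{d-2})^{d-1}$ by the multinomial theorem and discarding all terms containing some $T_i^2$ yields $T_\bif^{d-1} = T_0 \wedge \cdots \wedge T_{d-2}$, modulo the combinatorial normalization built into the definition of $T_\bif$.

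For the main identity $\mu_\bif = T_0 \wedge \cdots \wedge T_{d-2}$, I would approximate $G$ by smoothed maxima $G_\varepsilon = \max_\varepsilon(g_0,\ldots,g_{d-2})$ converging uniformly to $G$ as $\varepsilon \to 0$. By the Bedford--Taylor continuity of the Monge--Amp\`ere operator along uniform convergence of continuous psh potentials, $(dd^c G_\varepsilon)^{d-1}$ converges weakly to $\mu_\bif$. Expanding $(dd^c G_\varepsilon)^{d-1}$ via the chain rule produces a combination of wedge products $T_{i_1}\wedge\cdots\wedge T_{i_{d-1}}$ weighted by partial derivatives of $\max_\varepsilon$; the repeated-index contributions vanish in the limit by the first step, leaving only $T_0 \wedge \cdots \wedge T_{d-2}$. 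As a sanity check, on each open dominance region $\Omega_{i_0} = \{g_{i_0} > g_j,\ j \ne i_0\}$ both sides vanish: $G = g_{i_0}$ locally so $(dd^c G)^{d-1} = T_{i_0}^{d-1} = 0$, and the factor $T_{i_0}$ in the wedge product vanishes there since $g_{i_0} > 0$ places us outside $\supp T_{i_0}$.

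The main obstacle is making the third step fully rigorous, i.e.\ justifying that no extraneous positive measure concentrated on the coincidence locus $\bigcup_{i \ne j}\{g_i = g_j\}$ survives the chain-rule expansion in the limit. A complementary way to conclude is to match total masses on $\A^{d-1}_\C$ — both are equal to $1$ in view of the growth estimate of Proposition~\ref{propBH} and the fact that the cohomology class of $dd^c G$ and of each $T_i$ in $\p_\C^{d-1}$ agrees with $[\om]$ — and then to combine this equality of masses with the already-shown coincidence of supports.
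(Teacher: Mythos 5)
Your first step is correct and is the standard argument: $g_i$ is pluriharmonic on $\{g_i>0\}$ (there it is a locally uniform limit of the pluriharmonic functions $d^{-n}\log|P_{c,a}^n(c_i)|$), so $\supp T_i\subset\{g_i=0\}$, and continuity of $g_i$ gives $T_i\wedge T_i=dd^c(g_i\,T_i)=0$. Note, however, that the multinomial expansion then yields $\big(\sum_i T_i\big)^{d-1}=(d-1)!\;T_0\wedge\cdots\wedge T_{d-2}$, so with the normalization $T_\bif=\frac{1}{d-1}\sum_i T_i$ one gets $T_\bif^{d-1}=\frac{(d-1)!}{(d-1)^{d-1}}\,T_0\wedge\cdots\wedge T_{d-2}$: the constants do not literally match for $d\ge3$, and your phrase ``modulo the combinatorial normalization'' is papering over a discrepancy that sits in the statement rather than in your computation. (The paper itself gives no proof here; it refers to~\cite[\S 6 and \S 7]{favredujardin}, where the convention $T_\bif=\sum_i T_i$ is used and the factor $(d-1)!$ appears explicitly.)

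The genuine gap is the identity $(dd^cG)^{d-1}=T_0\wedge\cdots\wedge T_{d-2}$, and neither of your two routes closes it. In the smoothed-max expansion, $dd^cG_\e$ contains, besides the convex combination $\sum_i\partial_i\!\max_\e(g)\,T_i$, positive second-order terms of the form $\partial^2_{ij}\!\max_\e(g)\,dg_i\wedge d^cg_j$; these are precisely the contributions concentrating on the coincidence locus, and nothing in your argument shows they die in the limit (in general they do not: $dd^c\max(u,v)$ is not a convex combination of $dd^cu$ and $dd^cv$). The fallback ``same total mass and same support implies equality'' is not a valid implication --- two distinct probability measures can share their support --- and in any case you have only compared the two measures on the dominance regions $\Omega_{i_0}$, whose union misses the common support $\bigcap_i\{g_i=0\}$ entirely. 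The missing mechanism is a localization-and-substitution argument: (i) every mixed product $T_{i_1}\wedge\cdots\wedge T_{i_j}$ is supported on $\bigcap_l\{g_{i_l}=0\}$, and on $\bigcap_{k\ne i}\{g_k=0\}$ one has $G=g_i$ because all $g_k\ge0$; (ii) if $u,v$ are continuous psh and $u=v$ on $\supp R$ for a closed positive current $R$, then $dd^cu\wedge R-dd^cv\wedge R=dd^c\big((u-v)R\big)=0$. Combining these with a support analysis of the intermediate currents $(dd^cG)^k\wedge T_{i_{k+1}}\wedge\cdots$ is the actual content of the proof in~\cite{favredujardin}; it is not a formal consequence of $T_i\wedge T_i=0$, and it is absent from your proposal.
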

We refer to~\cite[\S 6 and \S 7]{favredujardin} for a proof. DeMarco has proved that the support of the current $T_\bif$ coincides with the bifurcation locus of the family $(P_{c,a})_{(c,a)\in\C^{d-1}}$ in the classical sense of Ma\~n\'e-Sad-Sullivan (see \cite{DeMarco1}).


\subsection{A good metric on the line bundle $\mathcal{O}(1)$.}
\label{sec:semipos-arch}

Let $L \to X$ be any holomorphic line bundle on a complex manifold $X$. A hermitian metric $h$ on $L$
is a way to assign to each local section $\sigma \in H^0(U,L)$ over an open subset $U\subset X$
a positive function $|\sigma|_h : U \to \R_+$ such that $|f\, \sigma|_h = |f| \, |\sigma|_h$ for any
$f\in \O(U)$. These functions are also supposed to be compatible under restrictions. 
In a local chart one may write $|\sigma|_h = |\sigma (z)| e^{-g(z)}$ for some real-valued function $g$. 
The metric is said to be \emph{continuous} when $g$ is continuous, and \emph{semi-positive} when $g$ is psh, i.e. 
when the curvature form $dd^c g$ of $h$ is a positive closed $(1,1)$ current. 
More generally  one can consider semi-positive singular metrics. By definition this is an assigment as above 
given locally in an open set $U$ by $|\sigma|_h = |\sigma (z)| e^{-g(z)}$ where $g: U \to [-\infty, + \infty)$ is an arbitrary psh function.

\medskip

The line bundle $\mathcal{O}(1)\to\p^{d-1}_\C$ is determined by the hyperplane at infinity $H_\infty$ so that any section $\sigma$ on an open subset $U$ of $\p^{d-1}_\C$ can be identified with a meromorphic map $\sigma : U \to \C$ that has poles of order $\le 1$ along $H_\infty$. Using this trivialization a continuous hermitian metric  on $\mathcal{O}(1)$ is given by a non-negative function $g : \p^{d-1}_\C \to \R \cup \{ + \infty\}$ such that $g - \log | z |$ is continuous on any open set where $H_\infty = \{ z=0\}$. We shall denote by $|\sigma|_g := |\sigma|e^{-g}$ the associated metric. The metric is semipositive when $g$ is psh on $\mathbb{A}^{d-1}_\C$.

\begin{proposition}
The metric $|\cdot|_G$ is a continuous semi-positive metric on $\mathcal{O}(1)$.
\label{propmetric}
\end{proposition}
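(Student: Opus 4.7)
The strategy is to verify separately the two defining properties of a continuous semi-positive metric on $\mathcal{O}(1)$: semi-positivity of the curvature current, and continuity of the local weight, the latter being nontrivial only along $H_\infty$.

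Semi-positivity is essentially a re-reading of the extension statement recalled just before Proposition~\ref{propDF}. Indeed, $G$ is continuous and plurisubharmonic on $\A^{d-1}_\C$ and satisfies $G\leq \log^+\max\{|c|,|a|\}+O(1)$ by Proposition~\ref{propBH}, so $dd^c G$ admits a unique extension as a positive closed $(1,1)$-current on $\p^{d-1}_\C$ not charging $H_\infty$, and this extension is exactly the curvature current of $|\cdot|_G$.

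For continuity, I would work in a local chart of $\p^{d-1}_\C$ near a point of $H_\infty$, for instance the chart in which $c_1$ is dominant, with local coordinates $w_0 \pe 1/c_1$, $w_i \pe c_i/c_1$ for $2\le i\le d-2$, and $w_a \pe a/c_1$, so that $H_\infty=\{w_0=0\}$. After trivializing $\mathcal{O}(1)$ by the section corresponding to $c_1$, the weight of $|\cdot|_G$ takes the form $\tilde g(w) \pe G(c,a)+\log|w_0|$, and the task reduces to showing that $\tilde g$ extends continuously across $\{w_0=0\}$. The key input is the scaling identity
\[
P_{\lambda c',\lambda a'}(\lambda z)=\lambda^d\, P_{c',a'}(z),
\]
immediate from~\eqref{eq:defpoly} together with the homogeneity of $\sigma_j$. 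Setting $\lambda\pe 1/w_0$ and $(c',a')\pe (1,w_2,\ldots,w_{d-2},w_a)$ gives $(c,a)=\lambda(c',a')$; the critical points of $P_{c,a}$ are then $\lambda c'_i$, and the functional equation $g_{c,a}\circ P_{c,a}=d\cdot g_{c,a}$ yields $g_i(c,a)=d^{-1}g_{c,a}\bigl(\lambda^d P_{c',a'}(c'_i)\bigr)$. Combined with the classical asymptotic
\[
g_{c,a}(Z)=\log|Z|-\tfrac{\log d}{d-1}+o(1) \quad \text{as } |Z|/\max\{|c|,|a|\}\to\infty,
\]
itself obtained by iterating $\log|P_{c,a}(z)|=d\log|z|-\log d + O(\max\{|c|,|a|\}/|z|)$, this produces the expansion
\[
\tilde g_i(w) \pe g_i(c,a)+\log|w_0| = \tfrac{1}{d}\log\bigl|P_{c',a'}(c'_i)\bigr|-\tfrac{\log d}{d(d-1)}+o(1)
\]
as $w_0\to 0$, locally uniformly on any compact region of $\{w_0=0\}$ on which $P_{c',a'}(c'_i)$ stays away from zero.

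By Lemma~\ref{lminfinity} at every point of $\{w_0=0\}$ at least one $P_{c',a'}(c'_i)$ is non-zero, so for the corresponding index $i$ the function $\tilde g_i$ extends continuously across $H_\infty$. Each $\tilde g_i$ is upper semicontinuous on the chart (it is the weight of a semi-positive singular metric coming from the extension of $T_i$), and at every point of $\{w_0=0\}$ the maximum $\max_i\tilde g_i$ is realised and dominated locally by an index $i$ for which $\tilde g_i$ is continuous; a routine semicontinuity argument then shows that $\tilde g=\max_i\tilde g_i$ is continuous across $\{w_0=0\}$. The main obstacle is precisely the Green function estimate in the combined regime where both $z$ and the parameter $(c,a)$ tend to infinity; once this uniform expansion is in hand, Lemma~\ref{lminfinity} and the semicontinuity argument turn the pointwise continuity of the dominant $\tilde g_i$ into continuity of $\tilde g$.
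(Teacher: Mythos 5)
Your overall strategy is essentially the paper's: reduce to continuity of the weight along $H_\infty$, use Lemma~\ref{lminfinity} to see that at each boundary point $x$ at least one index $i$ is ``good'' (i.e.\ $P_{c',a'}(c'_i)\neq 0$ at $x$, equivalently $x\notin\Gamma_i$ by Proposition~\ref{propDF}), show that the good $\tilde g_i$ extend continuously across $H_\infty$, and argue that they dominate the maximum near $x$. Your expansion $\tilde g_i\to\frac1d\log|P_{c',a'}(c'_i)|-\frac{\log d}{d(d-1)}$ is correct and is a sharper, more explicit form of the paper's observation that $g_i-\log|z|$ is pluriharmonic off $\Gamma_i$; the semi-positivity part is also fine.

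The gap is in the last step, which is exactly where the paper spends its effort. Your asymptotic is only claimed (and only holds) on compact regions where $P_{c',a'}(c'_i)$ stays away from zero, so it says nothing about a ``bad'' index $j$ near a point $x\in\Gamma_j\cap H_\infty$: there one must bound $\tilde g_j$ from \emph{above} in the regime where $w_0\to0$ and $P_{c',a'}(c'_j)\to0$ simultaneously, and the ratio $|P_{c,a}(c_j)|/\max\{|c|,|a|\}$ need not tend to infinity. The ``routine semicontinuity argument'' cannot supply this: upper semicontinuity of the psh extension of $\tilde g_j$ across $H_\infty$ bounds $\limsup_{y\to x}\tilde g_j(y)$ by the value of the extension at $x$, but that value is itself defined as this limsup, so the argument is circular; your phrase ``dominated locally by an index $i$ for which $\tilde g_i$ is continuous'' is precisely the assertion requiring proof. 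The missing input is a uniform one-sided bound such as $g_{c,a}(z)\le\log^+\max\{|z|,|c|,|a|\}+O(1)$ (this is \cite[Lemma~6.4]{favredujardin}, which the paper invokes), yielding $\tilde g_j\le\frac1d\log\varepsilon+O(1)$ on $\{|P_{c,a}(c_j)|\le\varepsilon\max\{|c|,|a|\}^d\}$ and hence beating, for $\varepsilon$ small, the lower bound on the good indices furnished by your expansion. (Alternatively one could restrict the psh extension of $\tilde g_j$ to $H_\infty$ and compare it with $\frac1d\log|P_{c',a'}(c'_j)|$ off the pluripolar set $\Gamma_j\cap H_\infty$.) Either way this is a genuine additional estimate, not a formal consequence of what you have written.
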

Our proof is similar to the arguments given in~\cite[\S~7]{GHT}.
\begin{proof}
One needs to show that 
\begin{eqnarray*}
\tilde{G}:= G - \log^+\max\left\{|c|,|a|\right\}
\end{eqnarray*}
extends to a continuous function on $\p^{d-1}_\C$. It follows from Proposition \ref{propBH}  that $\tilde{G}$ is bounded  near infinity. Recall that $g_k (c,a):= g_{c,a}(c_k)$, $T_k$ is the extension of mass $1$ of $dd^c g_k$ to $\p^{d-1}_\C$, and $\Gamma_k$ denote its support. Observe that for any $k$, the semi-positive singular hermitian metric $|\cdot|_{g_k}$ is continuous on $\p^{d-1}_\C\setminus\Gamma_k$ since its curvature is zero outside $\Gamma_k$.

 Pick any point $x\in H_\infty$ and choose coordinates near $x$ such that $H_\infty = \{ z=0\}$. Pick any $0\le i \le d-2$, and suppose first that $x \notin \Gamma_i$. Then $g_i -\log|z|$ is pluriharmonic near $x$, hence $g_i \ge \log|z| - A$ for some constant $A$. Suppose on the other hand that  $x \in \Gamma_i$. In a neighborhood of $x$, we have $|P_{c,a}(c_i)| \le \varepsilon\, \max \{ |c_k|, |a| \}^d$ by  Proposition~\ref{propBH} and~\ref{propDF}. Here $\varepsilon$ is a positive constant that can be chosen arbitrarily small. By \cite[Lemma 6.4]{favredujardin}, it follows that
\begin{center}
$g_i(c,a) =\displaystyle \frac1d g_{c,a}(P_{c,a}(c_i)) \le \log \max \{ |c_k|, |a| \} + \frac{\log C}{(d-1)d} + \frac{\log \varepsilon}{d} $
\end{center}
Shrinking the neighborhood if necessary, we may thus assume that 
\begin{center}
$\displaystyle G(c,a) = \max_{x \notin \Gamma_i}  g_i (c,a)$
\end{center}
near $x$. It now follows easily that $G - \log|z|$ is continuous near $x$.
\end{proof}


\section{At a non-archimedean place.}\label{sec:nonarchi}
We extend the results of the previous section to a non-archimedean metrized field $(\KK, |\cdot|_v)$. 
We construct the local Green function $g_{c,a,v}$ in Proposition~\ref{prop:localgreen}, and give precise estimates on
$G_v(c,a) = \max_i g_{c,a}(c_i)$ (Proposition~\ref{BHnonarch}) that imply that the line bundle $\O(1)\to\p^{d-1}_\KK$ can be endowed with a semipositive metric
in the sense of Zhang (see \S\ref{sec:semipos-nonarch}).


\subsection{Local Green functions.}

Let $\mathbb{K}$ be an algebraically closed field of characteristic zero equipped with a \emph{non-archimedean} absolute value $|\cdot|_v$. 

For any $(c,a) \in \KK^{d-1}$, we may consider the polynomial $P_{c,a}$ acting on $\KK$. We first list (classical) estimates that will be important in the sequel.

Again we write $|c|_v \pe \max_{1\le i \le d-2} |c_i|_v$.
\begin{lemma}\label{lupperbound}
There exists a constant $\alpha_v\geq1$ such that
\begin{equation}\label{eq:upperboundnonarch}
|P_{c,a}(z)|_v\leq \alpha_v\, \max\left\{|c|_v,|a|_v, |z|_v \right\}^d~.
\end{equation}
for any $(c,a)\in \KK^{d-1}$ and any $z\in\KK$. When the residual characteristic of $\KK$ is greater than $d+1$, then we may take $\alpha_v =1$. 
\end{lemma}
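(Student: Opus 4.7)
The plan is a direct computation using the ultrametric inequality applied to the defining expression
\begin{equation*}
P_{c,a}(z) = \frac{1}{d}z^d + \sum_{j=2}^{d-1}(-1)^{d-j}\sigma_{d-j}(c)\frac{z^j}{j} + a^d.
\end{equation*}
First I would apply the non-archimedean triangle inequality term-by-term to obtain
\begin{equation*}
|P_{c,a}(z)|_v \leq \max\left\{\frac{|z|_v^d}{|d|_v},\ \max_{2\leq j\leq d-1}\frac{|\sigma_{d-j}(c)|_v\, |z|_v^j}{|j|_v},\ |a|_v^d\right\}.
\end{equation*}

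Next I would estimate each symmetric function. Since $\sigma_{d-j}(c_1,\ldots,c_{d-2})$ is a sum of monomials of total degree $d-j$ in the coordinates $c_1,\ldots,c_{d-2}$, applying the ultrametric inequality to that sum yields $|\sigma_{d-j}(c)|_v \leq |c|_v^{d-j}$. Then for every $j$ with $2\leq j \leq d-1$,
\begin{equation*}
|c|_v^{d-j}\, |z|_v^j \leq \max\{|c|_v,|z|_v\}^d \leq \max\{|c|_v,|a|_v,|z|_v\}^d,
\end{equation*}
and the same upper bound trivially holds for the remaining terms $|z|_v^d$ and $|a|_v^d$.

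The only residual quantities are the denominators $|j|_v$ for $2 \leq j \leq d$, which I would simply absorb by defining
\begin{equation*}
\alpha_v := \max\left\{1,\ \max_{2\leq j\leq d}|j|_v^{-1}\right\} \geq 1.
\end{equation*}
This gives inequality~(\ref{eq:upperboundnonarch}) in general. For the final assertion, if the residual characteristic $p$ of $\KK$ is greater than $d+1$, then $p$ does not divide any integer $j \in \{2,\ldots,d\}$, so $|j|_v=1$ for all such $j$ and we may take $\alpha_v = 1$.

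There is no real obstacle here: the argument is a completely routine application of ultrametric estimates, and the two small points to be careful about are (i) tracking where the integer denominators $2,\ldots,d$ enter, and (ii) using the elementary fact that symmetric polynomials satisfy the expected bound $|\sigma_k(c)|_v \le |c|_v^k$ under a non-archimedean norm.
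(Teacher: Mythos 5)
Your proof is correct and follows essentially the same route as the paper: term-by-term application of the ultrametric inequality, the bound $|\sigma_{d-j}(c)|_v \le |c|_v^{d-j}$, and absorbing the denominators into $\alpha_v := \max_{j\le d}|j|_v^{-1}$, with the residual-characteristic remark handled identically.
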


\begin{proof}
This follows immediately from~\eqref{eq:defpoly}, and the non-archimedean triangle
inequality.  Observe first that
\begin{equation}\label{eq:symnonarch}
|\sigma_{j}(c)|_v \le \max_{1\le k\le d-2} \{ |c_k|_v\}^j = |c|_v^j~.
\end{equation}
Whence
\begin{center}
$|P_{c,a}(z)|_v
\le \max \{ |d|_v^{-1}  \, |z|_v^d, |j|_v^{-1}  \, |c|_v^{d-j} \, |z|_v^j , |a|_v^d\}
\le  \max_{j\le d} \{ |j|_v^{-1}\} \, \max\{ |c|_v, |a|_v, |z|_v\}^d$
\end{center}
as required, with $\alpha_v := \max_{j\le d} \{ |j|_v^{-1}\} $.
\end{proof}
\begin{lemma}\label{llowerbound}
Write
\begin{center}
$C_v(c,a) := \max \left\{ |d|_v^{1/(d-1)}, |d|_v^{1/d}\, |a|_v, \max_{2\le j \le d-1} |\sigma_{d-j}(c)|_v^{1/(d-j)} |d/j|_v^{1/(d-j)}\right\}
~.$
\end{center}
Then for any $z \in \KK$ such that $|z|_v \ge C_v(c,a)$, we have
\begin{equation}\label{eq:lowerboundnonarch}
|P_{c,a}(z)|_v = |d|_v^{-1}\, |z|_v^d \ge |z|_v~.
\end{equation}
\end{lemma}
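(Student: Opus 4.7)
The proof follows the standard non-archimedean dominant-term principle. The plan is to expand $P_{c,a}(z)$ via \eqref{eq:defpoly}, apply the ultrametric triangle inequality, and then verify that the leading monomial $z^d/d$ dominates every other monomial under the hypothesis $|z|_v \ge C_v(c,a)$.

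First, from
\begin{equation*}
P_{c,a}(z) = \frac{z^d}{d} + \sum_{j=2}^{d-1}(-1)^{d-j}\sigma_{d-j}(c)\frac{z^j}{j} + a^d
\end{equation*}
and the strong triangle inequality, I would obtain
\begin{equation*}
|P_{c,a}(z)|_v \leq \max\Bigl\{|d|_v^{-1}|z|_v^d,\ |a|_v^d,\ \max_{2 \le j \le d-1}|j|_v^{-1}|\sigma_{d-j}(c)|_v|z|_v^j \Bigr\},
\end{equation*}
recalling the key non-archimedean fact that equality holds as soon as one term in the bound strictly dominates the rest.

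Next I would unpack the three clauses defining $C_v(c,a)$ and observe that each is set up precisely so as to make the leading term $|d|_v^{-1}|z|_v^d$ dominate one family of other terms. The clause $|z|_v \geq |d|_v^{1/d}|a|_v$ rearranges to $|d|_v^{-1}|z|_v^d \geq |a|_v^d$; the clauses $|z|_v \geq |\sigma_{d-j}(c)|_v^{1/(d-j)}|d/j|_v^{1/(d-j)}$ for $2 \le j \le d-1$ rearrange to $|d|_v^{-1}|z|_v^d \geq |j|_v^{-1}|\sigma_{d-j}(c)|_v|z|_v^j$; and the clause $|z|_v \geq |d|_v^{1/(d-1)}$ rearranges to $|z|_v^{d-1} \geq |d|_v$, i.e. $|d|_v^{-1}|z|_v^d \geq |z|_v$. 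The first two families yield the ultrametric equality $|P_{c,a}(z)|_v = |d|_v^{-1}|z|_v^d$, and the last clause delivers the final assertion $|P_{c,a}(z)|_v \geq |z|_v$.

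The only subtlety — which I would not call a real obstacle — is that the ultrametric yields equality only in the presence of strict domination. This is automatic whenever $|z|_v > C_v(c,a)$, since the comparisons above then become strict; at the boundary $|z|_v = C_v(c,a)$ one still retains at least $|P_{c,a}(z)|_v \geq |z|_v$, which is the inequality actually needed downstream to iterate and conclude escape to infinity. The whole argument is thus a bookkeeping check of exponents against the three clauses of $C_v(c,a)$.
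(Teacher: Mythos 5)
Your core argument is precisely the paper's: expand $P_{c,a}(z)$ from \eqref{eq:defpoly}, apply the ultrametric inequality, and observe that each clause of $C_v(c,a)$ is exactly the rearrangement of the statement that $|z^d/d|_v$ dominates one competing monomial ($|a^d|_v$, the terms $|j|_v^{-1}|\sigma_{d-j}(c)|_v|z|_v^j$, and $|z|_v$ itself, respectively). That bookkeeping is correct and matches the published proof line for line.

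The flaw is in your final paragraph, where you dismiss the equality case by asserting that at the boundary $|z|_v = C_v(c,a)$ ``one still retains at least $|P_{c,a}(z)|_v \ge |z|_v$.'' That is false. When the leading term merely \emph{ties} with another monomial, the strong triangle inequality yields only the upper bound $|P_{c,a}(z)|_v \le |d|_v^{-1}|z|_v^d$, and complete cancellation is possible. Concretely, take $d=2$ and residue characteristic $\ne 2$, so $P_{c,a}(z) = \frac12 z^2 + a^2$ and $C_v(c,a) = \max\{|2|_v,\,|2|_v^{1/2}|a|_v\}$; with $a=1$ and $z=\sqrt{-2}\in\KK$ (which exists since $\KK$ is algebraically closed) one has $|z|_v = 1 = C_v(c,a)$ yet $P_{c,a}(z) = \frac12(-2)+1 = 0$, so neither the claimed equality nor your fallback inequality holds. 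To be fair, the paper's own proof commits the same imprecision (it deduces equality from non-strict domination), so your argument is no worse and you were right to sense the subtlety; but the correct repair is to require strict inequality $|z|_v > C_v(c,a)$, or equivalently to enlarge $C_v(c,a)$ by any constant factor $>1$ --- which costs nothing in the subsequent applications (the escape estimates and Lemma~\ref{ltropfort}) --- rather than to assert a lower bound that the ultrametric does not provide.
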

\begin{proof}
If $|z|_v\ge C_v(c,a)$, we have
\begin{center}
$\displaystyle\max_{2\leq j\leq d-1}\left\{\left|\frac{\sigma_{d-j}(c)}{j}z^j\right|_v\, ,\, |a|^d_v\right\}\leq\left|\frac{1}{d}z^d\right|_v=|d|_v^{-1}\, |z|_v^d$
\end{center}
and the non-archimedean triangle inequality gives $|P_{c,a}(z)|_v = |d|_v^{-1}\, |z|_v^d$. Since $|z|_v\ge |d|_v^{1/(d-1)}$, we have $|d|_v^{-1}|z|_v^{d-1}\ge 1$, which ends the proof.
\end{proof}
The previous two estimates imply
\begin{proposition}
Write
\begin{center}
$ \tC_v(c,a) := \max \{ |a|_v, |c|_v, C_v(c,a)\}~,$ 
\end{center}
and set $h_{c,a,v} (z) := \log \max \{ \tC_v(c,a), |z|_v\}$. Then 
we have
\begin{eqnarray}
d^{-1}  h_{c,a,v} \circ P_{c,a}(z) & \ge&  h_{c,a,v} (z)  + \min \left\{ \frac1d \log |d|_v^{-1}, \left(\frac{1}{d}-1\right) \log \tC_v(c,a) \right\} \label{eqdown}
\\
d^{-1}  h_{c,a,v} \circ P_{c,a}(z) &\le&  h_{c,a,v} (z)  +  \frac1d \log \alpha_v
\label{equp}
\end{eqnarray}
\end{proposition}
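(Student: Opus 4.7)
The plan is to dichotomize on the size of $|z|_v$ relative to $\tilde{C}_v(c,a)$ and then read off both inequalities from Lemmas~\ref{lupperbound} and~\ref{llowerbound}.

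First I would treat the regime $|z|_v \geq \tilde{C}_v(c,a)$. Since $\tilde{C}_v(c,a) \geq C_v(c,a)$, Lemma~\ref{llowerbound} gives the exact identity $|P_{c,a}(z)|_v = |d|_v^{-1}\,|z|_v^d$ together with $|P_{c,a}(z)|_v \geq |z|_v \geq \tilde{C}_v(c,a)$. Hence $h_{c,a,v}(z) = \log|z|_v$ and
\[
h_{c,a,v}\circ P_{c,a}(z) = \log\bigl(|d|_v^{-1}|z|_v^d\bigr) = d\log|z|_v + \log|d|_v^{-1},
\]
so $d^{-1}h_{c,a,v}\circ P_{c,a}(z) - h_{c,a,v}(z) = d^{-1}\log|d|_v^{-1}$. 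This is automatically $\geq$ the minimum on the right-hand side of \eqref{eqdown}, and it is $\leq d^{-1}\log\alpha_v$ because $\alpha_v = \max_{j\leq d}|j|_v^{-1} \geq |d|_v^{-1}$, proving both \eqref{eqdown} and \eqref{equp} in this case.

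Next I would treat the regime $|z|_v < \tilde{C}_v(c,a)$, where $h_{c,a,v}(z) = \log\tilde{C}_v(c,a)$. Since $\tilde{C}_v(c,a) \geq \max\{|c|_v,|a|_v\}$, Lemma~\ref{lupperbound} yields $|P_{c,a}(z)|_v \leq \alpha_v\,\tilde{C}_v(c,a)^d$. The key auxiliary fact I would verify is
\[
\alpha_v\,\tilde{C}_v(c,a)^{d-1} \geq 1,
\]
which is immediate from $\tilde{C}_v(c,a) \geq C_v(c,a) \geq |d|_v^{1/(d-1)}$ combined with $\alpha_v \geq |d|_v^{-1}$. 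This guarantees $\max\{\tilde{C}_v(c,a),|P_{c,a}(z)|_v\} \leq \alpha_v\,\tilde{C}_v(c,a)^d$, whence
\[
d^{-1}h_{c,a,v}\circ P_{c,a}(z) \leq d^{-1}\log\alpha_v + \log\tilde{C}_v(c,a) = h_{c,a,v}(z) + d^{-1}\log\alpha_v,
\]
establishing \eqref{equp}. For \eqref{eqdown} I use the trivial bound $h_{c,a,v}\circ P_{c,a}(z) \geq \log\tilde{C}_v(c,a)$, which gives
\[
d^{-1}h_{c,a,v}\circ P_{c,a}(z) - h_{c,a,v}(z) \geq \left(\tfrac{1}{d} - 1\right)\log\tilde{C}_v(c,a),
\]
and this is $\geq$ the minimum on the right-hand side of \eqref{eqdown}.

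I do not anticipate a real obstacle here; the only point requiring care is the auxiliary inequality $\alpha_v\tilde{C}_v(c,a)^{d-1}\geq 1$, which is precisely what allows the $\tilde{C}_v(c,a)$-term to be absorbed into $\alpha_v\tilde{C}_v(c,a)^d$ in the second regime and thus makes the upper bound \eqref{equp} uniform in $(c,a,z)$.
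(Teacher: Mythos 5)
Your proof is correct and follows essentially the same route as the paper: split on whether $|z|_v$ exceeds $\tC_v(c,a)$ and apply Lemmas~\ref{llowerbound} and~\ref{lupperbound} in the respective regimes. If anything, your treatment of \eqref{equp} is slightly more complete than the paper's, since you explicitly verify $\alpha_v\,\tC_v(c,a)^{d-1}\ge 1$ (which resolves the max of the two candidate bounds that the paper leaves implicit) and use the exact identity from Lemma~\ref{llowerbound} to dispose of the large-$|z|_v$ case in one step.
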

\begin{proof}
Suppose $|z|_v \ge \tC_v(c,a)$. Then~\eqref{eq:lowerboundnonarch} implies
$$|P(z)|_v = |d|_v^{-1} |z|_v^d\ge |z|_v \ge \tC_v(c,a)$$ hence 
\begin{center}
$d^{-1}  h_{c,a,v} \circ P_{c,a}(z) = d^{-1} \log |P_{c,a}(z)|_v = \log|z|_v + d^{-1} \log |d|_v^{-1}$.
\end{center}
When  $|z|_v \le \tC_v(c,a)$, then $h_{c,a,v}(z) = \log \tC_v(c,a)$ and
\begin{center}
$d^{-1}  h_{c,a,v} \circ P_{c,a}(z) \ge  d^{-1} \log \tC_v(c,a)  = h_{c,a,v}(z) + \left(\frac1d-1\right) \log \tC_v(c,a)$.
\end{center}
These two inequalities imply~\eqref{eqdown}.

For the upper bound, suppose again $|z|_v \ge \tC_v(c,a)$. Then~\eqref{eq:upperboundnonarch} implies
$|P(z)|_v \le \alpha_v |z|_v^d$ hence
$$d^{-1}  \log  | P_{c,a}(z) |_v
\le \log|z|_v + \frac1d \log \alpha_v  = h_{c,a,v}(z) + \frac1d \log \alpha_v.
$$
If $| P_{c,a}(z) |_v\ge \tC_v(c,a)$ then~\eqref{equp} is clear. If  
$| P_{c,a}(z) |_v\le \tC_v(c,a)$, then
\begin{center}
$d^{-1} h_{c,a,v} \circ P_{a,c}(z) = d^{-1}\log \tC_v(c,a) \le \log \tC_v(c,a) \le \log|z|_v = h_{c,a,v}(z)$.
\end{center}
The last case is when   $|z|_v \le \tC_v(c,a)$, so that by~\eqref{eq:upperboundnonarch} we have
\begin{center}
$d^{-1} h_{c,a,v} \circ P_{a,c}(z) \le \max \{ d^{-1} \log \tC_v(c,a), d^{-1}\log \alpha_v + \log \tC_v(c,a) \}$.
\end{center}
This concludes the proof.
\end{proof}
All these estimates imply the following key
\begin{proposition}\label{prop:localgreen}
For any constant $C >0$, the sequence of functions $d^{-n} h_{c,a,v} \circ P_{c,a}^n(z)$
converges uniformly on sets of the form
$\{ (z,a,c) \in \KK^d, \, \max \{ |a|_v, |c|_v \} \le  C \}$.

The function
\begin{center}
$g_{c,a,v}(z) := \lim_{n\to \infty} d^{-n} h_{c,a,v} \circ P_{c,a}^n(z)$
\end{center}
thus defines a continuous non-negative function on $\KK^d$
that satisfies $g_{c,a,v} \circ P_{c,a} = d g_{c,a,v}$ and 
$\{z \in\KK, \,  g_{a,c,v}(z) =0 \} = \{ z \in \KK, \, |P^n_{c,a}(z)|_v = O(1)\}$.
\end{proposition}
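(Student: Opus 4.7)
Set $u_n(z,c,a) \pe d^{-n} h_{c,a,v} \circ P_{c,a}^n(z)$, so the task reduces to showing that $(u_n)$ converges uniformly on sets of the form $\{(z,c,a)\in\KK^d : \max(|a|_v,|c|_v)\le C\}$. The entire argument is a telescoping series built from~\eqref{eqdown}--\eqref{equp}. Writing
\[
u_{n+1}(z) - u_n(z) = d^{-n}\bigl( d^{-1} h_{c,a,v}\circ P_{c,a}(w_n) - h_{c,a,v}(w_n) \bigr),\qquad w_n \pe P_{c,a}^n(z),
\]
I would apply~\eqref{eqdown} and~\eqref{equp} to the bracket with $z$ replaced by $w_n$. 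The key observation is that both bounds are \emph{uniform in} $w_n$, depending only on $|d|_v$, $\alpha_v$, and $\tC_v(c,a)$. Since $\alpha_v$ is a global constant and $\tC_v(c,a)$ is manifestly bounded on $\{\max(|a|_v,|c|_v)\le C\}$, I obtain $|u_{n+1}-u_n|\le d^{-n}M(C)$ with $M(C)$ independent of $(z,n)$. Summing the resulting geometric series yields the uniform Cauchy property on these sets and hence uniform convergence to some function $g_{c,a,v}$.

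Continuity of $g_{c,a,v}$ on all of $\KK^d$ is then automatic, since each $u_n$ is continuous in $(z,c,a)$---the iterate $P_{c,a}^n$ is polynomial in these variables and $\tC_v$ is continuous in $(c,a)$---and local uniform convergence preserves continuity. The functional equation $g_{c,a,v}\circ P_{c,a}=d\,g_{c,a,v}$ then falls out of $u_n\circ P_{c,a}=d\,u_{n+1}$ by letting $n\to\infty$.

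To identify the zero set and prove non-negativity, I would argue by dichotomy on boundedness of the orbit $\{P_{c,a}^n(z)\}_{n\ge 0}$. If the orbit is bounded then $h_{c,a,v}(P_{c,a}^n(z))=O(1)$, so $u_n(z)\to 0$ and $g_{c,a,v}(z)=0$. Otherwise some iterate satisfies $|P_{c,a}^{n_0}(z)|_v>\tC_v(c,a)\ge C_v(c,a)$, and iterating Lemma~\ref{llowerbound} gives the exact recursion $|P_{c,a}^{n+1}(z)|_v=|d|_v^{-1}|P_{c,a}^n(z)|_v^d$ for all $n\ge n_0$. The associated linear recurrence $x\mapsto dx+\log|d|_v^{-1}$ has fixed point $\log|d|_v/(d-1)\le 0$; since $\tC_v(c,a)\ge|d|_v^{1/(d-1)}$ by construction and $|P_{c,a}^{n_0}(z)|_v$ is \emph{strictly} larger than $\tC_v(c,a)$, the orbit starts strictly above the fixed point, so the distance to the fixed point multiplies by $d$ at each step. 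Consequently $d^{-n}\log|P_{c,a}^n(z)|_v$ converges to a strictly positive limit, whence $g_{c,a,v}(z)>0$. Non-negativity of $g_{c,a,v}$ is then immediate.

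Most of the heavy lifting has already been done in establishing~\eqref{eqdown}--\eqref{equp}, so I expect no serious conceptual obstacle. The one place where care is needed is the fixed-point analysis in the final paragraph, which must be carried out with explicit constants because $|d|_v$ and $\alpha_v$ can both be nontrivial when the residue characteristic of $\KK$ is $\le d+1$.
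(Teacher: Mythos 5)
Your proposal is correct and follows exactly the argument the paper intends: the paper gives no separate proof of Proposition~\ref{prop:localgreen} beyond the remark that inequalities~\eqref{eqdown} and~\eqref{equp} imply it, and your telescoping estimate $|u_{n+1}-u_n|\le d^{-n}M(C)$ (using that $\tC_v(c,a)$ is bounded on $\max\{|a|_v,|c|_v\}\le C$) is precisely that implication. The identification of the zero set via Lemma~\ref{llowerbound} and the affine recursion $x\mapsto dx+\log|d|_v^{-1}$, with the fixed point dominated by $\log\tC_v(c,a)$, is also the standard (and correct) way to finish.
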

For the record we also observe that~\eqref{equp} and an immediate induction implies
\begin{equation} \label{eq:gnottoobig}
g_{c,a,v}(z) \le h_{c,a,v}(z) + \frac{1}{d-1} \log \alpha_v
\end{equation}
The function $g_{c,a,v}$ is called the $v$-\emph{adic Green function} of $P_{c,a}$. 


\subsection{Green function on the parameter space.}
As in the archimedean case, we define $g_{j,v}(c,a)\pe g_{c,a,v}(c_j)$ for $0\leq j\leq d-2$ and
\begin{center}
$G_v(c,a)\pe \max _{0\le i \le d-2} g_{i,v}(c,a)$.
\end{center}
Our aim is to prove an analog of Proposition~\ref{propmetric} in a non-archimedean context.

\medskip

Recall that the set  $\p^{d-1}(\KK)$ of $\KK$-points of the projective space can be endowed with the following projective metric: 
\begin{eqnarray*}
d_{\p^{d-1}(\KK)}
(  [x_0: \ldots : x_{d-2}] , [x'_0: \ldots : x'_{d-2}] ) := 
\frac{ \max_i \{ |x_i x'_0 - x'_i x_0|_v \}  }{\max_i |x_i|_v\, \max_i |x'_i|_v}
\end{eqnarray*}
The rest of this section is devoted to the proof of the following result.
\begin{proposition}\label{BHnonarch}
For each $n\in \mathbb{N}$,  the function 
\begin{equation}\label{eq:semipos}
H_n (c,a) := \max_{0\le i \le d-2}  \left\{ \frac1{d^n} \log^+| P^n_{c,a}(c_i)|_v \right\}   - \log^+ \max \{|c|_v, |a|_v\} \end{equation}
extends to a continuous function on $\p^{d-1}(\KK)$, and the sequence $H_n$ converges uniformly to $G_v- \log^+ \max \{ |c|_v, |a|_v\}$ on $\KK^{d-1}$.
Moreover,  we have $G_v (c,a) =  \log^+ \max \{ |c|_v, |a|_v\}$  when the residual characteristic of $\KK$ is larger than $d+1$.
\end{proposition}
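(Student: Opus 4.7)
Write $M(c,a)\pe\max\{|c|_v,|a|_v\}$ and view $u_n^i(c,a)\pe P^n_{c,a}(c_i)$ as a polynomial of total degree at most $d^n$ in $(c,a)$.

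For the continuity of $H_n$: the scaling $P_{\lambda c,\lambda a}(\lambda z)=\lambda^d P_{c,a}(z)$ combined with induction on $n$ identifies the top-degree $d^n$ part of $u_n^i$ as
\[
Q_n^i(c,a)=d^{-(d^{n-1}-1)/(d-1)}\,P_{c,a}(c_i)^{d^{n-1}},
\]
the remaining terms having strictly smaller degree. By Lemma~\ref{lminfinity} together with Hilbert's Nullstellensatz applied in $\Q[c_1,\ldots,c_{d-2},a]$, there exists a constant $\kappa_v>0$ with $\max_i|P_{c,a}(c_i)|_v\ge\kappa_v M^d$. Combined with Lemma~\ref{lupperbound}, this gives $\max_i|u_n^i|_v\asymp M^{d^n}$ as $M\to\infty$. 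Homogenizing in coordinates $[c:a:t]$, the function $\max_i|\tilde u_n^i|_v/\max\{|c|_v,|a|_v,|t|_v\}^{d^n}$ is then continuous and bounded below away from $0$ on $\p^{d-1}(\KK)$, so $H_n$ extends continuously.

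For the uniform convergence on $\KK^{d-1}$, note that Proposition~\ref{prop:localgreen} together with the estimate $|h_{c,a,v}(z)-\log^+|z|_v|\le\log\max\{1,\tC_v(c,a)\}$ yields $d^{-n}\log^+|P^n(c_i)|_v\to g_{i,v}(c,a)$ uniformly on compact subsets. For control at infinity, fix $R\gg 1$; when $M\ge R$, the index $i$ achieving $\max_i|P_{c,a}(c_i)|_v$ satisfies $|P_{c,a}(c_i)|_v\ge\kappa_v M^d\ge C_v(c,a)$, so Lemma~\ref{llowerbound} iterates to $|P^n_{c,a}(c_i)|_v=|d|_v^{-(d^{n-1}-1)/(d-1)}|P_{c,a}(c_i)|_v^{d^{n-1}}$. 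A direct geometric-series computation then gives the explicit uniform bound $|d^{-n}\log|P^n_{c,a}(c_i)|_v-g_{i,v}(c,a)|=|\log|d|_v|\cdot d^{-n}/(d-1)$ on $\{M\ge R\}$. Combined with the uniform convergence on the region $\{M\le R\}$, this produces uniform convergence on all of $\KK^{d-1}$.

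For the final assertion, when $p>d+1$, Lemma~\ref{lupperbound} gives $\alpha_v=1$, so iteration yields $|P^n_{c,a}(c_i)|_v\le\max\{1,M\}^{d^n}$, whence $G_v\le\log^+M$. For the reverse inequality when $M>1$: the Nullstellensatz certificates for Lemma~\ref{lminfinity} can be chosen with coefficients in $\Z[1/d!][c,a]$, and since $d!$ is invertible modulo $v$, they persist under reduction. Rescaling $(c,a)=(M\tilde c,M\tilde a)$ with $\max\{|\tilde c|_v,|\tilde a|_v\}=1$, the scaling relation gives $|P_{c,a}(c_i)|_v=M^d|P_{\tilde c,\tilde a}(\tilde c_i)|_v$, and the reductions $\overline{P_{\tilde c,\tilde a}(\tilde c_i)}$ cannot all vanish; hence $|P_{c,a}(c_i)|_v=M^d$ for some $i$. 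Lemma~\ref{llowerbound} with $|d|_v=1$ then forces $|P^n_{c,a}(c_i)|_v=M^{d^n}$, giving $g_{i,v}(c,a)=\log M$ and so $G_v\ge\log M$. The main subtlety throughout is the uniform estimate at infinity in the second step, which relies crucially on the explicit iteration formula of Lemma~\ref{llowerbound} once $M$ exceeds the threshold $R$ provided by the Nullstellensatz bound from Step~1.
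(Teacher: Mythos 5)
Your treatment of the continuity of $H_n$ and of the convergence on bounded sets is essentially the paper's, and is fine (modulo the slip that the homogenized ratio is bounded below only \emph{near} $H_\infty$, not on all of $\p^{d-1}(\KK)$ — it vanishes e.g. at $(c,a)=(0,\dots,0)$, where $\log^+$ rescues continuity). The genuine gap is in the uniform convergence near infinity. On $\{M\ge R\}$ you control $\frac1{d^n}\log|P^n_{c,a}(c_i)|_v$ only for the \emph{single} index $i$ maximizing $|P_{c,a}(c_i)|_v$, and then assert that $\max_j\frac1{d^n}\log^+|P^n_{c,a}(c_j)|_v$ converges uniformly to $G_v=\max_j g_{j,v}$. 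Two things are missing: (a) you have not shown that $g_{i,v}=G_v$, i.e.\ that the maximizing index at time $1$ also realizes the max of the Green functions; (b) for a non-escaping index $j$ (one with $|P_{c,a}(c_j)|_v$ small compared to $M^d$), the convergence of $\frac1{d^n}\log^+|P^n_{c,a}(c_j)|_v$ to $g_{j,v}$ is \emph{not} uniform on $\{M\ge R\}$: the error coming from \eqref{eqdown} is of order $d^{-n}\log\tC_v(c,a)$, which is unbounded in $(c,a)$. So the max over all $j$ cannot simply inherit uniformity from the one good index. This is precisely what the paper's Lemmas~\ref{lup} and~\ref{llastcall} are for: one introduces two thresholds ($U_i(\alpha_v^d\e,C)$ versus $U_j(\e,C)$) so that on each piece $U_{I,J}$ of the resulting cover of $\{M\ge C\}$ the escaping indices \emph{dominate exactly} ($|P^n(c_i)|_v\ge|P^n(c_j)|_v$ for all $n$) and the non-escaping Green functions are bounded above by the escaping ones; without such a domination argument your conclusion does not follow.

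Your proof of the last assertion also takes an unnecessarily risky route. You invoke Nullstellensatz certificates "with coefficients in $\Z[1/d!]$" whose reduction mod $v$ still has only the origin as common zero; that is an unproved claim about positive residue characteristic (Lemma~\ref{lminfinity} is a characteristic-zero statement), and it is exactly the assertion that the constant $\beta_v$ of Lemma~\ref{lminegP} can be taken equal to $1$. The paper avoids this entirely: when $M=\max\{|c|_v,|a|_v\}\ge1$ and $\alpha_v=1$, the coordinate realizing $M$ is either some $c_i$ with $|c_i|_v\ge C_v(c,a)$, to which Lemma~\ref{llowerbound} applies directly, or $a$, in which case $|P_{c,a}(c_0)|_v=|a|_v^d$; either way one critical orbit escapes at the exact rate $M^{d^n}$ with no reduction argument needed. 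I recommend replacing your reduction argument by this direct one, and supplying the domination lemma in the convergence step.
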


\begin{proof}
Suppose first the residual characteristic of $\KK$ is larger than $d+1$. Then
$\alpha_v = 1$, and $C_v(c,a) \le \max \{|c|_v,|a|_v\}$ hence $\tC_v(c,a) = \max \{|c|_v,|a|_v\}$.
Suppose  that $\max \{ |c|_v,|a|_v\} \le 1$. Then by induction~\eqref{eq:upperboundnonarch} implies
$|P^n_{c,a}(c_i)|_v \le 1$ for all $n$ and for all $0\le i\le d-2$, hence $G_v(c,a) =0$.

Conversely assume  $\max \{|c|_v,|a|_v\} \ge 1$.
Suppose  $|c_i|_v = \max \{|c|_v,|a|_v\}$ for some $1\le i\le d-2$ (the case $|a|_v \ge |c|_v$ can be treated analogously). Then $|c_i|_v \ge C_v(c,a)$ and~\eqref{eq:lowerboundnonarch} implies by induction that  $|P^n_{c,a}(c_i)|_v = |c_i|_v^{d^n}$, hence  $g_{c,a,v}(c_i) = \log |c_i|_v$. For any $ j \neq i$, we also have $|P^n_{c,a}(c_j)|_v \le |c_i|_v^{d^n}$ by~\eqref{eq:upperboundnonarch} whence  $G_v(c,a) =  g_{c,a,v}(c_i) =  \log |c_i|_v = \log^+ \max \{ |c|_v, |a|_v\}$ as required.

\smallskip
 
To prove the other statements, we shall need the following lemmas.

\begin{lemma}\label{ltropfort}
There exists a constant $C_v>0$ such that for all $0\le i\le d-2$, and for all $C\ge1$ and $\varepsilon >0$
such that $  C^d\varepsilon > \max \{ 1, C_v C\}$, then
we have 
\begin{equation}\label{eqcoool1}
\frac1{d^{n+1}} \log |P^{n+1}_{c,a}(c_i)|_v = \frac1d \log |P_{c,a}(c_i)|_v -  (\sum_0^{n-1} d^{-l}) \, \log |d|_v
\end{equation}
on the open set 
$$
U_i( \varepsilon, C):= \left\{ (c,a) \in \KK^{d-1}, \, \max \{ |c|_v, |a|_v\} \ge C, \, |P_{c,a}(c_i)|_v \ge \varepsilon \max \{|c|_v,|a|_v\} ^d\right\}~.
$$
In particular, 
\begin{equation}\label{eqcoool2}
g_{c,a,v}(c_i) = \frac1d \log |P_{c,a}(c_i)|_v - \frac1{d-1} \, \log |d|_v >0
\end{equation}
and $\frac1{d^{n}} \log |P^{n}_{c,a}(c_i)|_v \to g_{c,a,v}(c_i)$ uniformly on $U_i( \varepsilon, C)$.
\end{lemma}

\begin{lemma}\label{lup}
Pick $C\geq1$ and $\varepsilon >0$ such that $ C^d\varepsilon > \max \{ 1, C_v C\}$ as above. Then
for any $0\le i \le d-2$, 
we have 
\begin{equation}\label{eqeasyup}
g_{c,a,v}(c_i) \le  \log \max \{|c|_v,|a|_v\}  +\frac1d \log \e  + \frac{\log \alpha_v}{d(d-1)}
\end{equation}
for any $(c,a) \notin U_i(\varepsilon, C)$ with $\max \{ |c|_v, |a|_v \} \ge C$ .
\end{lemma}

\begin{lemma}\label{llastcall}
Pick $C\geq1$ and $\varepsilon >0$ such that $ C^d\varepsilon > \max \{ 1, C_v C\}$ as above.
For any two distinct indices $0\le i,j\le d-2$, then
$\max \{ |P^{n}_{c,a}(c_i)|_v,|P^{n}_{c,a}(c_j)|_v\}  = |P^{n}_{c,a}(c_i)|_v$ for all $n\ge 1$, and 
all $(c,a) \in U_i(\alpha_v^d\,\e,C) \setminus U_j(\e,C)$. Moreover
$$
\frac1{d^{n}} \log  \max \{ |P^{n}_{c,a}(c_i)|_v,|P^{n}_{c,a}(c_j)|_v\}  \to \max\{ g_{c,a,v}(c_i) , g_{c,a,v}(c_j)\}
$$
uniformly on $U_i(\alpha_v^d\,\e,C) \setminus U_j(\e,C)$.
\end{lemma}

\begin{lemma}\label{lminegP}
There exists a constant $\beta_v\leq1$ such that for any $(c,a)\in \KK^{d-1}$, one has
$$
\max_{0\leq j\leq d-2}|P_{c,a}(c_j)|_v \ge
\beta_v\max\left\{|c|_v,|a|_v\right\}^d~.$$
In other words, we have $$\bigcup_{0 \le i \le d-2} U_i ( \varepsilon, C) = \left\{ (c,a) \in \KK^{d-1}, \, \max \{ |c|_v , |a|_v \}  \ge C \right\}~,$$
for any $C \ge 1$ and any $0<\varepsilon<\beta_v$.
\end{lemma}

We shall first  prove that $\max_{0\le i \le d-2}  \left\{ \frac1{d^n} \log^+| P^n_{c,a}(c_i)|_v \right\}$ converges uniformly to $G_v$. Pick $C\gg 1$ and $\varepsilon >0$ such that $\beta_v > \varepsilon > C_v C^{1-d}$, and  $C^d\varepsilon > 1$.

\smallskip

On the set $B:= \{ (c,a), \, \max \{ |c|_v, |a|_v \} \le C\}$ then Proposition~\ref{prop:localgreen} implies that
$h_n:= \frac1{d^n} \log \max \{\tC_v(c,a), |P^n_{c,a}(c_i)|_v \}$ converges uniformly to $g_{c,a}(c_i)$ for all $0\le i \le d-2$. Now observe that $0 \le \tC_v(c,a) \le CC_v$ is uniformly bounded on $B$, whence $$\sup_B \left| h_n - \frac1{d^n} \log^+ |P^n_{c,a}(c_i)|_v \right|\le \frac1{d^n} \log (C C_v) \to  0~.$$
It follows that $\frac1{d^n} \log^+ \max_i \{ |P^n_{c,a}(c_i)|_v\} $ converges uniformly to $G_v$ on $B$. 
 
\smallskip

Lemma~\ref{lminegP} implies that the complement of $B$ is covered by the open sets
$U_{I,J}:= \cup_I U_i(\alpha_v^d\e,C) \setminus \cup_J U_j(\e,C)$ where $I,J$ range over all subsets of $\{ 0, \ldots , d-2\}$ such that $I\cap J = \emptyset$ and $I \cup J = \{ 0, \ldots , d-2\}$. 
Lemma~\ref{llastcall} shows that
$$
\max_{0\le i \le d-2} \frac1{d^n}  \log |P^n_{c,a}(c_i)|_v  = \max_{i \in I } \frac1{d^n}  \log |P^n_{c,a}(c_i)|_v  \to G_v
$$
uniformly on $U_{I,J}$.  This proves~\eqref{eq:semipos} since in view of~\eqref{eqcoool2},
$g_{c,a,v}(c_i)|_{U_{I,J}} >0$ for any $i \in I$, hence 
$\max_{i \in I } \frac1{d^n}  \log |P^n_{c,a}(c_i)|_v = \max_{i \in I } \frac1{d^n}  \log^+ |P^n_{c,a}(c_i)|_v$ for $n$ large enough.

\smallskip

We next prove that 
$$H_n:= \frac1{d^n}  \max _{0\le i \le d-2} \log^+ |P^n_{c,a}(c_i)|_v - \log^+ \max \{ |c|_v, |a|_v\}$$
extends continuously to $\p^{d-1}(\KK)$.
Since all polynomials $P^n_{c,a}(c_i)$ are homogeneous of degree $d^n$, 
the function  $$(c,a) \mapsto \frac{\max _{0\le i \le d-2} |P^n_{c,a}(c_i)|_v}{\max \{ |c|_v, |a|_v\}^{d^n}}$$
 is well-defined on $\p^{d-1}(\KK)$ and continuous. It follows that to prove that 
$H_n$ extends continuously to $\p^{d-1}(\KK)$, it is sufficient to check that it is bounded from below near $H_\infty$.
On any open subset $U_{I,J}$ as above, Lemma~\ref{llastcall} and~\eqref{eqcoool1} imply
\begin{multline*}
H_{n+1} = \frac1{d^{n+1}}  \max _{i \in I } \log |P^{n+1}_{c,a}(c_i)|_v - \log \max \{ |c|_v, |a|_v\}
= \\ \frac1{d}  \max _{i \in I} \log |P_{c,a}(c_i)|_v - \log \max \{ |c|_v, |a|_v\} -\sum_{l=0}^{n-1} d^{-l} \log|d|_v \ge \frac1d \log \e 
\end{multline*}
as required, since $|d|_v\leq 1$.

We have thus proved that $H_n$ is a sequence of continuous functions on $\p^{d-1}(\KK)$ that converges uniformly to $G_v - \log^+ \max \{ |c|_v, |a|_v\}$, hence the latter function is continuous.
This concludes the proof of Proposition~\ref{BHnonarch}.
\end{proof}

\begin{proof}[Proof of Lemma~\ref{ltropfort}]
We begin  observing that
$$
C_v(c,a) \le C_v \, \max \{ |c|_v, |a|_v \}
$$
for some constant $C_v>0$ depending only on $d$ and $v$.

 For any $(c,a) \in U:= U_i( \varepsilon, C)$,  we have
$$C_v(c,a) \le C_v \, \max \{ |a|_v, |c|_v\} \le C_v C^{1-d} \max \{ |a|_v, |c|_v\} ^d
\le C_v C^{1-d} \varepsilon^{-1} |P_{c,a}(c_i)|_v~.
$$
 Whence~\eqref{eq:lowerboundnonarch} implies 
\begin{center}
$|P^n_{c,a}( P_{c,a}(c_i) )|_v = |d|_v^{ -1 - \ldots - d^{n-1}} |P_{c,a}(c_i)|_v^{d^n}$
\end{center}
for all $n$ if $ C_v C^{1-d} \varepsilon^{-1} \le 1$, in which case
$g_{c,a,v}(c_i) = \frac1d \log |P_{c,a}(c_i)|_v - \frac1{d-1} \, \log |d|_v$
follows from the functional equation $g_{c,a} (c_i) = d^{-1} g_{c,a} (P_{c,a} (c_i))$.
By assumption $C^d\e >1$, hence $|d|_v\leq 1$ implies
$g_{c,a,v}(c_i) \ge \frac1d \log |P_{c,a}(c_i)|_v \ge \log (\e C^d) >0$.
\end{proof}

\begin{proof}[Proof of Lemma~\ref{lup}]
By~\eqref{eq:gnottoobig}, we get
\begin{center}
$g_{c,a,v}(c_i) = \frac1{d} g_{c,a,v}(P_{c,a}(c_i)) \le \frac1{d} \log \max \{ |P_{c,a}(c_i)|_v, \tC_v(c,a)\} + \frac1{d(d-1)} \log \alpha_v~.$
\end{center}
Now replacing $C_v$ by greater constant if necessary, we have $\tC_v(c,a) \le C_v \max \{ |a|_v, |c|_v \}$,
so that 
\begin{eqnarray*}
g_{c,a,v}(c_i) & \le & \frac1{d} \log \max 
\left\{  \varepsilon \max \{ |a|_v, |c|_v\} ^d,  C_v  \max \{ |a|_v, |c|_v\}\right\} + \frac{\log \alpha_v}{d(d-1)} \\
& \le & \log \max \{ |a|_v, |c|_v\} + \max \left\{ \frac1d \log \e ,  ( 1 - \frac1d) \log C + \frac1d \log C_v \right\} 
+ \frac{\log \alpha_v}{d(d-1)} 
\end{eqnarray*}
since $(c,a) \notin U_i(\e,C)$,  $\max \{ |a|_v, |c|_v \}\ge 1$ and $C\geq1$.
By assumption we have $\log \e \ge (1-d) \log C + \log C_v$ whence
$$
g_{c,a,v}(c_i) \le
\log \max \{ |a|_v, |c|_v\} + \frac1d \log \e  + \frac{\log \alpha_v}{d(d-1)}~.$$
This concludes the proof.
\end{proof}

\begin{proof}[Proof of Lemma~\ref{llastcall}]
On $U_i(\alpha_v\e,C)\setminus U_j(\e,C)$ we have
$$
|P^n_{c,a}(c_i)|_v = |P_{c,a}(c_i)|_v^{d^{n-1}} \, |d|_v^{-1 - \ldots - d^{n-2}}
\ge (\alpha_v^d\e)^{d^{n-1}} \max \{ |c|_v, |a|_v\}^{d^{n}} \, |d|_v^{-1 - \cdots - d^{n-2}}
$$
by~\eqref{eqcoool1}
and
$$
|P^n_{c,a}(c_j)|_v
\le \alpha_v^{1+ \ldots + d^{n-1}} \e^{d^{n-1}}\, \max \{ |c|_v, |a|_v\}^{d^n}
$$
by iterating~\eqref{eq:upperboundnonarch} and using $\max\{|c|_v, |a|_v\}  \ge C$ hence 
$\e \max\{|c|_v, |a|_v\}^d \ge \max\{|c|_v, |a|_v\}$.
It follows that 
$$
\frac{|P^n_{c,a}(c_i)|_v}{|P^n_{c,a}(c_j)|_v}
\ge |d|_v^{-1 - \ldots - d^{n-2}} \alpha_v^{d^n - \frac{d^n-1}{d-1}} \ge 1~.
$$
The uniform convergence then follows from Lemma~\ref{ltropfort}.
\end{proof}

\begin{proof}[Proof of Lemma~\ref{lminegP}]
Let $\mathcal{I}\subset \KK[c_1, \ldots, c_{d-2}, a]$ be the ideal generated by the homogeneous polynomials $\{P_{c,a}(c_i)\}_{0\leq i\leq d-2}$. By Lemma~\ref{lminfinity}  these generators have no  common zero other than $(0,\ldots,0)$, hence  $\sqrt{\mathcal{I}}=(c_1,\ldots,c_{d-2},a)$ by the Hilbert's Nullstellensatz. 

In particular, there exists $m\geq d$ and for $1\leq i\leq d-2$, homogeneous polynomials $Q_{i,j}=\sum_{|I|=m-d}q_{i,j,I}X_1^{i_1}\cdots X_{d-1}^{i_{d-1}}\in\mathbb{Q}[X_1,\ldots,X_{d-1}]$ of degree $m-d$ such that
$$c_i^m=\displaystyle\sum_{j=0}^{d-2}Q_{i,j}(c,a)P_{c,a}(c_j)~.$$
We thus have
\begin{eqnarray}
|c_i|_v^m & \leq & C_Q\, \cdot\max\left\{|c|_v^{m-d},|a|_v^{m-d}\right\}\max_{0\leq j\leq d-2}|P_{c,a}(c_j)|_v,
\label{inegck}
\end{eqnarray}
 for any $1\leq i\leq d-2$, where $C_Q = \max_{0\leq i,j\leq d-2, \atop |I|=m-d}\left|q_{i,j,I}\right|_v$.
Observe now that $a^d=P_{c,a}(c_0)$, hence
\ref{inegck} gives
$$
\max\left\{|c|_v^d,|a|_v^d\right\}\leq \beta_v^{-1}\max_{0\leq j\leq d-2}|P_{c,a}(c_j)|_v,$$
with $\beta_v^{-1}:= \max \{ 1, C_Q\}$.
\end{proof}


\subsection{Semi-positive continuous non-archimedean metric.}\label{sec:semipos-nonarch}

We refer to~\cite{ACL2} for the material of this section. We assume that $(\KK,|\cdot|_v)$ is a 
non-archimedean local field of characteristic zero, i.e. a finite extension of $\Q_p$ and let $\KK^0$ be its ring of integers.

Let us first recall how to define the $\KK$-analytic space in the sense of Berkovich associated with  $\p^{d-1}_\KK$. The projective space is obtained by patching together $d$ copies of the affine space, and in a similar way
its $\KK$-analytic avatar $\p^{d-1,\an}_\KK$ is obtained  by patching together $d$ copies of the space of multiplicative semi-norms on $\KK [x_1, \ldots , x_{d-1}]$ extending the norm on $\KK$ endowed with the topology of the pointwise convergence. The space $\p^{d-1, \an}_\KK$ is compact and Hausdorff. Itis naturally endowed with a structural sheaf of analytic functions $\O_{\p^{d-1}}^{\an}$. 

\smallskip

A line bundle on  $\p^{d-1, \an}_\KK$ is a rank $1$ invertible coherent sheaf. 
For any hyperplane $H$, and any $k\in \Z$ the sheaf of local meromorphic functions whose
divisor is $\ge - k H$ defines a line bundle $\O(k)$.  Global sections of $\O(k)$ are rational functions on 
 $\p^{d-1}_\KK$ whose divisor is $\ge - k H$, and the vector space of all sections of $\mathcal{O}(k)$
can be identified with the space of 
homogeneous polynomials of degree $k$. Any line bundle on a projective space
is isomorphic to some $\O(k)$.

\smallskip

By definition a (\emph{continuous}) \emph{metric} on  $\O(k) \to \p^{d-1, \an}_\KK$ is the data for each local continuous section $s$ of $\O(k)$ defined on an open set $U$ of a  continuous function $\|s\| : U \to \R_+$ such that $\| fs \| = | f|_v \, \|s\|$ for all analytic function $f$, and  $\|s\| (x) = 0$ iff $s(x) =0$. We also impose
natural compatibility conditions for these functions with respect to restrictions. 

Just as in the archimedean case, any non-negative function $g : \p^{d-1}_\C \to \R \cup \{ + \infty\}$ such that $g - \log | z |_v$ is continuous on any open set where $H_\infty = \{ z=0\}$ induces
a metric $|\cdot |_g$ on $\O(1)$ such that  for any degree $1$ homogeneous polynomials $Q$ viewed as a global section of $\O(1)$ as above one has $| Q|_g = |Q|_v \, e^{- g}$.

\smallskip

A \emph{model} of the line bundle $\O(k) \to \p^{d-1, \an}_\KK$ over $\KK^0$ is the choice of 
\begin{itemize}
\item
a  normal $\KK^0$-scheme $\mathfrak{X}$ that is projective and flat over $\spec \KK^0$, and has generic fiber isomorphic to $\p^{d-1, \an}_\KK$;
\item
 a hypersurface $\mathfrak{H}$ of $\mathfrak{X}$ whose generic fiber is equal to $kH$.
\end{itemize}
Any such model determines a metric on $\O(k)$ as follows.  We cover $\mathfrak{X}$ by affine charts $\mathfrak{U}_i = \spec B_i$ for some finitely generated $\KK^0$-algebras $B_i$ such that the set $A_i\subset \p^{d-1, \an}_\KK$ of bounded semi-norms on $B_i \otimes_{\KK^0} \KK$ forms a (closed) cover of 
$\p^{d-1, \an}_\KK$. We also choose $h_i \in B_i$ determining $\mathfrak{H}$ on $\mathfrak{U}_i$. 
Observe that for any other choice $h'_i$ we have $| h_i / h'_i (x) |_v = 1$ on $A_i$. It follows that we may define
in a unique way a continuous metric by setting $|\sigma|_\mathfrak{H} (x) \pe | \sigma h_i (x)|_v$ for any  section $\sigma$ of $\O(k)$ and for any $x \in A_i$ where $\sigma h_i$ is viewed as an element of $B_i \otimes_{\KK^0} \KK$.

A \emph{semipositive model metric} $|\cdot |$ on $\O(k)$ is by definition a metric such that we can find an integer $e\ge 1$ and a hypersurface $\mathfrak{H}$ that is \emph{nef} over $\spec \KK^0$
and whose generic fiber has degree $ke$ and 
satisfying  $|\sigma | = | \sigma^e |^{1/e}_{\mathfrak{H}}$ for any local section $\sigma$ of $\O(k)$.
By definition a \emph{semipositive metric} is a uniform limit of  semipositive model metrics.

\medskip

The projective space $\p^{d-1}_\KK$ admits a canonical model $\p^{d-1}_{\KK^0}$ over $\KK^0$
with affine charts of the form $$\spec \KK^0 \left[\frac{z_0}{z_j}, \ldots ,  \frac{z_{j-1}}{z_j}, \frac{z_{j+1}}{z_j}, \ldots \frac{z_{d-1}}{z_j}\right]~,$$
where $[z_0 : \ldots : z_{d-1}]$ are homogeneous coordinates. 
For $k\ge 0$ the hypersurface $[z_0 = 0]$ is nef over $\spec \KK^0$, and 
the induced semipositive model metric $|\cdot|_\nv$ on $\O(1)$ satisfies 
\begin{equation}\label{eqnv}
|Q|_\nv := \frac{|Q|_v}{\max \{ |z_0|_v, \ldots , |z_{d-1}|_v\}}~, 
\end{equation}
for any homogeneous polynomial of degree $1$. Observe that with the above notation
$|\cdot |_\nv = |\cdot |_g$ with $g = \log^+ \max \left\{ \left| x_1\right|_v, \ldots , \left| x_{d-1} \right|_v\right\}$ where $x_i = \frac{z_i}{z_0}$.
 
More generally, for any choice of  $d$ homogeneous polynomials 
$\tilde{P}_0, \ldots , \tilde{P}_{d-1}$  of degree $q\ge 1$ such that $\cap_{0\le i \le d-1} \tilde{P}_i^{-1}(0) = (0)$ we get a natural metric on 
$\O(1)$ by setting 
$$
|Q|_{\tilde{P}} := \frac{|Q|_v}{\max \{ |\tilde{P}_0|_v^{1/q}, \ldots , |\tilde{P}_{d-1}|_v^{1/q}\}}~. 
$$
Here we have $|\cdot |_{\tilde{P}} = |\cdot |_g$ with $g = \frac{1}{q}\log\max_{0\le i \le d-1} \left\{ |\tilde{P}_i|_v (1, x_1, \ldots , x_{d-1}) \right\}$.
\begin{lemma}\label{lmlast}
The metric $|\cdot|_{\tilde{P}}$ is a semipositive model metric. 
\end{lemma}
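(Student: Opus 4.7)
The plan is to realize $|\cdot|_{\tilde{P}}$ as a semipositive model metric by constructing an explicit $\KK^0$-model $\mathfrak{X}$ of $\p^{d-1}_\KK$ together with a nef hypersurface $\mathfrak{H}\subset\mathfrak{X}$ of generic degree $q$, whose associated model metric satisfies $|\sigma^q|_{\mathfrak{H}}=|\sigma|_{\tilde{P}}^q$ for every local section $\sigma$ of $\O(1)$; with $e=q$ this matches the definition recalled just above the lemma. First I would use the hypothesis $\bigcap_{i}\tilde{P}_i^{-1}(0)=(0)$ to define the finite morphism
\[
\Phi\colon\p^{d-1}_\KK\longrightarrow\p^{d-1}_\KK,\qquad [z_0:\cdots:z_{d-1}]\longmapsto[\tilde{P}_0:\cdots:\tilde{P}_{d-1}].
\]
Over $\KK^0$ the polynomials $\tilde{P}_i$ may nevertheless acquire common zeros on the special fiber, so $\Phi$ is only a rational map between the canonical integral models, and this indeterminacy has to be resolved.

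Second, let $\mathfrak{X}$ be the normalization of the blow-up of $\p^{d-1}_{\KK^0}$ along the coherent ideal sheaf generated by $\tilde{P}_0,\ldots,\tilde{P}_{d-1}$. Then $\mathfrak{X}$ is projective, flat and normal over $\spec\KK^0$, its generic fiber is canonically $\p^{d-1}_\KK$, and $\Phi$ extends to an honest morphism $\tilde\Phi\colon\mathfrak{X}\to\p^{d-1}_{\KK^0}$. I would then set $\mathfrak{H}\pe\tilde\Phi^*[z_0=0]$, so that $\mathcal{O}_{\mathfrak{X}}(\mathfrak{H})=\tilde\Phi^*\mathcal{O}(1)$ and the generic fiber of $\mathfrak{H}$ is the degree $q$ hypersurface $\{\tilde{P}_0=0\}\subset\p^{d-1}_\KK$.

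Third, nefness of $\mathfrak{H}$ over $\spec\KK^0$ is immediate from the construction: $\mathcal{O}_{\mathfrak{X}}(\mathfrak{H})$ is globally generated by its $d$ sections $\tilde{P}_0,\ldots,\tilde{P}_{d-1}$, which have no common zero on $\mathfrak{X}$ precisely because $\tilde\Phi$ is a morphism, and a globally generated line bundle on a projective scheme is nef. Hence $\mathfrak{H}$ is an admissible nef hypersurface in the sense of the definition of a semipositive model metric.

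Finally, I would verify the metric identity $|\sigma^q|_{\mathfrak{H}}^{1/q}=|\sigma|_{\tilde{P}}$ for a local section $\sigma$ of $\O(1)$. By functoriality of model metrics under morphisms between $\KK^0$-models, $|\cdot|_{\mathfrak{H}}$ is the pullback under $\tilde\Phi$ of the canonical metric $|\cdot|_\nv$ on $\O(1)\to\p^{d-1}_{\KK^0}$ recalled in~\eqref{eqnv}. For a degree one homogeneous polynomial $Q$, viewed as a section of $\O(1)$, this yields
\[
|Q^q|_{\mathfrak{H}}\;=\;\frac{|Q^q|_v}{\max_{0\leq i\leq d-1}|\tilde{P}_i|_v}\,,
\]
and taking $q$-th roots recovers precisely the defining formula of $|\cdot|_{\tilde{P}}$. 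The main obstacle is bookkeeping: one must justify that the blow-up construction delivers an admissible $\KK^0$-model in the sense of~\cite{ACL2} and that pulling back the canonical model metric along $\tilde\Phi$ coincides with the model metric associated with $\mathfrak{H}$; both are standard facts in non-archimedean Arakelov geometry, but need to be invoked with care.
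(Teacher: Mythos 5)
Your proof is correct and follows essentially the same route as the paper: both realize $|\cdot|_{\tilde P}$ as the pullback of the canonical metric $|\cdot|_\nv$ under the endomorphism $[z]\mapsto[\tilde P_0(z):\cdots:\tilde P_{d-1}(z)]$, resolve the induced rational map on integral models (your blow-up of the ideal $(\tilde P_0,\ldots,\tilde P_{d-1})$ is the paper's normalized graph), and conclude nefness of the pulled-back hyperplane. The only cosmetic difference is that you argue nefness via global generation while the paper invokes preservation of nefness under pullback; both are standard and equivalent here.
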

\begin{proof}
Choose homogeneous coordinates and set 
$F[z] = F[z_0 : \ldots : z_{d-1}] \pe [\tilde{P}_0(z) :  \ldots : \tilde{P}_{d-1}(z)]$.
This defines an endomorphism $F : \p^{d-1}_\KK \to \p^{d-1}_\KK$ of degree $q$.
We have a natural commutative diagram
$$
\xymatrix{
F^* \O(1) \simeq \O(q)
 \ar[d] \ar[r] 
&
\O(1) \ar[d]\\
\p^{d-1}_\KK
\ar[r]^F
&
\p^{d-1}_\KK
}
$$
and pulling-back  the metric $|\cdot |_\nv$ on $\O(1)$ by $F$ gives a metric $|\cdot|_\star$ on $\O(q)$ given by
\begin{equation}\label{eqstar}
|Q|_\star\pe \frac{|Q|_v}{\max \{ |\tilde{P}_0|_v, \ldots , |\tilde{P}_{d-1}|_v\}}
\end{equation}
for any homogeneous polynomials $Q$ of degree $q$.

Now choose any model $\mathfrak{X}$ of $\p^{d-1}_\KK$ such that 
the map $F$ induces a regular map $\mathfrak{F} : \mathfrak{X} \to \p^{d-1}_{\KK^0}$.
For instance one may take $\mathfrak{X}$ to be the normalization of the graph of the rational map $\p^{d-1}_{\KK^0} \dashrightarrow \p^{d-1}_{\KK^0}$ induced by $F$.
It follows that the metric $|\cdot|_\star$ is equal to the model metric associated with 
the pull-back by $\mathfrak{F}$ of the hyperplane $[z_0 =0]$ in $\p^{d-1}_{\KK^0}$.
Since the nefness property is preserved by pull-back it follows that $|\cdot|_\star$ is semipositive
which implies the result in view of~\eqref{eqnv} and~\eqref{eqstar}.
\end{proof}

We now come to the non-archimedean analog of Proposition~\ref{propmetric}.
\begin{proposition}\label{propmetricnon}
The metric $|\cdot|_{G_v}$ is a continuous semi-positive metric on $\O(1) \to \p^{d-1}_\KK$. Moreover, $|\cdot|_{G_v}=|\cdot|_\nv$ when the residual characteristic of $\KK$ is larger than $d+1$.
\end{proposition}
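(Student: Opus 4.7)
The plan is to exhibit $|\cdot|_{G_v}$ as the uniform limit of semipositive model metrics built from the iterates of $P_{c,a}$ evaluated at the critical points via Lemma~\ref{lmlast}. For each $n \ge 0$, set
\[
g_n(c,a) := \frac{1}{d^n} \log \max\{1,\, |P^n_{c,a}(c_0)|_v,\, \ldots,\, |P^n_{c,a}(c_{d-2})|_v\},
\]
so that $g_n = H_n + \log^+\max\{|c|_v, |a|_v\}$, where $H_n$ is as in Proposition~\ref{BHnonarch}. Let $\hat{P}^n_i(z_0, \ldots, z_{d-1})$ denote the homogenization of $P^n_{c,a}(c_i) \in \Q[c_1, \ldots, c_{d-2}, a]$ into a homogeneous polynomial of degree $d^n$, and put $\tilde{P}_0 := z_0^{d^n}$ together with $\tilde{P}_{i+1} := \hat{P}^n_i$ for $0 \le i \le d-2$. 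In the affine chart $\{z_0 = 1\} \simeq \KK^{d-1}$ with coordinates $(c_1, \ldots, c_{d-2}, a)$ one reads off
\[
g_n = \frac{1}{d^n} \log \max_{0 \le j \le d-1} |\tilde{P}_j(1, x_1, \ldots, x_{d-1})|_v,
\]
exactly matching the template of Lemma~\ref{lmlast}.

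In order to apply that lemma I must check that $(\tilde{P}_0, \ldots, \tilde{P}_{d-1})$ has no common zero in $\KK^d$ other than the origin. The factor $z_0^{d^n}$ forces $z_0 = 0$ at any such common zero, so one needs the vanishing of $\hat{P}^n_i(0, z_1, \ldots, z_{d-1})$ for all $i$, namely of the top-degree part of $P^n_{c,a}(c_i)$, to imply $(z_1, \ldots, z_{d-1}) = 0$. Using the joint homogeneity of $P_{c,a}(z)$ of degree $d$ in $(c, a, z)$, an induction on $n$ shows that the top-degree part of $P^n_{c,a}(c_i)$ equals a nonzero rational multiple of $(P_{c,a}(c_i))^{d^{n-1}}$; concretely, tracking the leading coefficient in $\lambda$ of $P^n_{\lambda c, \lambda a}(\lambda c_i)$ and iterating reduces everything to the leading monomial $w^d/d$ of $P_{c,a}(w)$. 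Simultaneous vanishing for all $i$ therefore forces $P_{c,a}(c_i) = 0$ for all $i$, and Lemma~\ref{lminfinity} then yields $(z_1, \ldots, z_{d-1}) = 0$. Lemma~\ref{lmlast} thus applies, and $|\cdot|_{g_n}$ is a semipositive model metric on $\O(1) \to \p^{d-1, \an}_\KK$.

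Proposition~\ref{BHnonarch} next gives that $H_n$ extends continuously to $\p^{d-1, \an}_\KK$ and converges there uniformly to $G_v - \log^+\max\{|c|_v, |a|_v\}$. Adding back the common term, the ratio $|\cdot|_{g_n}/|\cdot|_{G_v} = e^{G_v - g_n}$ converges uniformly to $1$ on the compact Berkovich space $\p^{d-1, \an}_\KK$. Hence $|\cdot|_{G_v}$ is continuous, and as a uniform limit of semipositive model metrics it is semi-positive in the sense of Zhang.

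For the last assertion, Proposition~\ref{BHnonarch} asserts that $G_v(c,a) = \log^+\max\{|c|_v, |a|_v\}$ as soon as the residual characteristic of $\KK$ is larger than $d+1$. Comparing with~\eqref{eqnv} in the affine chart $\{z_0 = 1\}$, in which $x_i = c_i$ for $1 \le i \le d-2$ and $x_{d-1} = a$, this coincides with the potential of $|\cdot|_\nv$, so $|\cdot|_{G_v} = |\cdot|_\nv$. The principal technical obstacle is the inductive determination of the top-degree parts of the iterates needed to invoke Lemma~\ref{lminfinity}; once this is in hand, the rest is a clean combination of Proposition~\ref{BHnonarch} with Lemma~\ref{lmlast}.
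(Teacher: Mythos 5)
Your proposal is correct and follows essentially the same route as the paper: identify $|\cdot|_{g_n}$ with the metric $|\cdot|_{\tilde P}$ of Lemma~\ref{lmlast} via the homogenizations $z_0^{d^n}$ and $z_0^{d^n}P^n_{c,a}(c_i)$, check the no-common-zero condition by noting that the top-degree part of $P^n_{c,a}(c_i)$ is a nonzero multiple of $P_{c,a}(c_i)^{d^{n-1}}$ and invoking Lemma~\ref{lminfinity}, and conclude by the uniform convergence furnished by Proposition~\ref{BHnonarch}. The only difference is cosmetic (indexing of the $\tilde P_j$ and your explicit verification of the final assertion), so nothing further is needed.
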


\begin{proof}
It follows from Proposition~\ref{BHnonarch} and the definition of a semi-positive metric that it is sufficient to
check that the metric $|\cdot |_{g_n} $ is a semipositive model metric on $\O(1)$ where
$g_n \pe  \max_{0\le i \le d-2}  \left\{ \frac1{d^n} \log^+| P^n_{c,a}(c_i)|_v \right\} $.
Observe that $P^n_{c,a}(c_i)$ are polynomials of degree $d^n$ that satisfy
$$ P^n_{c,a} (c_i) = \frac1{d^{1+\ldots + d^{n-1}}}\, P_{c,a}(c_i) ^{d^{n-1}}  + Q (c,a) ~.$$
with $\deg(Q) < d^n$.
We now pick homogeneous coordinates $[z_0: \ldots : z_{d-1}]$ such that
$[1 : c:a]$ is identified to $(c,a)$, i.e. $c_i = z_i/z_0$ for $1\le i \le d-2$, and $a= z_{d-1}/z_0$. 
Set  $\tilde{P}_{d-1}\pe z_0^{d^n}$, and
 $\tilde{P}_i (z_0, \ldots , z_{d-1})\pe z_0^{d^n} P^n_{c,a}(c_i)$  for $0\le i \le d-2$.

The $(d-1)$ polynomials $\tilde{P}_i$ are homogeneous of degree $d^n$ in the $z_i$'s.
Their common zeroes in $\p^{d-1}_\KK$  is the intersection of $\bigcap_{0\le i \le d-2} \{P_{c,a}(c_i) = 0\}$
with the hyperplane at infinity
 which is empty by Lemma~\ref{lminfinity}. In other words, 
$\cap_{0\le i \le d-1} \tilde{P}_i ^{-1}(0)= (0)$ in $\KK^{d}$ and
it follows from Lemma~\ref{lmlast}
that $|\cdot|_{\tilde{P}}$ is a semipositive model metric.

Since by definition  $|\cdot|_{\tilde{P}} = |\cdot |_{g_n}$ the proof is complete.
\end{proof}


\section{The bifurcation semipositive adelic metric and height function.}\label{sec:bif}

Let us briefly review the setting for Yuan's theorem.


\subsection{Semipositive adelic metrics.}

We let $K$ be any number field and denote by $M_K$ the set of its places, i.e. of its 
multiplicative norms  modulo equivalence. In each equivalence
class $v\in M_K$, we pick a norm $|\cdot|_v$ normalized in the usual way such that
the product formula $\prod_{M_K} |x|_v^{n_v} = 1$ holds for any $x\in K$, see~\cite{Silvermandiophantine}. Here $n_v\ge 1$ is the degree of the extension of complete normed fields $K_v/\Q_v$.

We let $K_v$ be the completion of $K$ with respect to any place $v \in M_K$, and write $\C_v$ for the completion of the algebraic closure of $K_v$.

Now pick any projective variety $X$ over $K$ and let $L \to X$ be any ample line bundle. To simplify notation for any $v\in M_K$ we write $X_v$ for the analytic variety induced by projective variety induced by $X$  over $K_v$. This analytic variety has to be understood in the sense of Berkovich when $v$ is a finite place. 
Similarly we denote by $L_v$ the line bundle induced by $L$ on $X_v$.

A \emph{semi-positive adelic metric} on $L \to X$ 
is the data for each place $v \in M_K$ of a semipositive continuous metric on 
the induced line bundle $L_v \to X_v$ in the sense of Sections~\ref{sec:semipos-arch} and~\ref{sec:semipos-nonarch}. These metrics are subject to the following conditions:
\begin{itemize}
\item
for any archimedean place such that $K_v = \C$, the metric $|\cdot|_v$ is invariant under conjugation; 
\item
there exists a model $\fL \to \fX$ of $L\to X$ over the ring of integers of $K$ such that for all finite places $v$ except for a finite number of exceptions, the metric $|\cdot|_v$ is induced by
the model $\fL_v \to \fX_v$ over the  ring of integers of $K_v$.
 \end{itemize}

Recall from Sections~\ref{sec:semipos-arch} and~\ref{sec:semipos-nonarch} that for any $v\in M_\Q$, the function $$G_v (c,a)= \max \{ g_{c,a,v}(c_i), \, 0\le i\le d-2\}$$ on $\C_v^{d-1}$ induces a natural metric $ |\cdot|_{G_v}$ on  $\O(1) \to \p^{d-1}_{\C_v}$.

The next result immediately follows from the definitions and Propositions~\ref{propmetric} and~\ref{propmetricnon}.

\begin{theorem}\label{thm:adelic}
The collection of metrics $\{ |\cdot|_{G_v} \}_{v\in M_\Q}$ induces a semipositive adelic metric on 
$\O(1)\to \p^{d-1}_\Q$.
\end{theorem}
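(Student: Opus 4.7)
The plan is to verify the three conditions in the definition of a semipositive adelic metric for the collection $\{|\cdot|_{G_v}\}_{v \in M_\Q}$, namely continuous semipositivity at every place, invariance under complex conjugation at the archimedean place, and compatibility with a single integral model at all but finitely many finite places. The archimedean place is covered directly by Proposition~\ref{propmetric}, and each non-archimedean place is covered by Proposition~\ref{propmetricnon}, so the first condition is immediate.

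For conjugation invariance at $v = \infty$, I would observe that the polynomial $P_{c,a}$ has coefficients in $\Q[c,a]$, so complex conjugation commutes with iteration: $P_{\bar c,\bar a}^n(\bar z) = \overline{P_{c,a}^n(z)}$. Taking the uniform limit in the definition of $g_{c,a}$ gives $g_{\bar c,\bar a}(\bar z) = g_{c,a}(z)$, and therefore $G(\bar c,\bar a) = G(c,a)$. This translates immediately into invariance of $|\cdot|_{G_\infty}$ under conjugation once one uses that the section-to-function identification involves only polynomials with rational coefficients.

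The heart of the argument is the third (model compatibility) condition. I would take as my global model the canonical projective space $\fX = \p^{d-1}_\Z$ equipped with the hypersurface $\mathfrak{H} = [z_0 = 0]$; this model induces the naive metric $|\cdot|_{\nv,v}$ on $\O(1)_v$ at every finite place via~\eqref{eqnv}. Proposition~\ref{BHnonarch} (in the ``moreover'' clause), restated in the language of Proposition~\ref{propmetricnon}, tells us that $|\cdot|_{G_v} = |\cdot|_{\nv,v}$ as soon as the residual characteristic of $\C_v$ is strictly greater than $d+1$. Since only finitely many rational primes are $\le d+1$, this exceptional set of places is finite, exactly as required in the definition.

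Assembling these three points concludes the proof. The main obstacle, in fact, is not located in the theorem itself but upstream: it is the sharp comparison $G_v = \log^+\max\{|c|_v, |a|_v\}$ at places of large residual characteristic (Proposition~\ref{BHnonarch}) which makes the global model work uniformly. Once that is in hand, the theorem reduces to unpacking the definitions and citing Propositions~\ref{propmetric} and~\ref{propmetricnon}, so the write-up can be kept very short.
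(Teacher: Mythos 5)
Your proposal is correct and follows exactly the route the paper intends: the paper simply states that the theorem "immediately follows from the definitions and Propositions~\ref{propmetric} and~\ref{propmetricnon}," and your write-up supplies precisely the implicit details — continuous semipositivity at each place from those two propositions, conjugation invariance from the rationality of the coefficients of $P_{c,a}$, and model compatibility at all but finitely many finite places via the identity $|\cdot|_{G_v}=|\cdot|_{\nv,v}$ when the residual characteristic exceeds $d+1$.
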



\subsection{The bifurcation height function.}

Let $\bar{L}$ be any semipositive adelic metric on an ample line bundle $L\to X$
over a projective variety $X$ of dimension $d-1$ over a number field $K$.
Such a metrization  induces a \emph{height function} $h_{\bar{L}}$ on $X (\overline{K})$ by setting for any $x\in X(\overline{K})$,
\begin{center}
$h_{\bar{L}}(x)\pe\displaystyle\frac{1}{\deg(x)}\sum_{v\in M_\mathbb{K}}\sum_{z\in O(x)}-\log|\sigma(z)|_v$
\end{center}
for any section $\sigma$ of $L$ which does not vanish on $O(x)$, where $O(x)$ is the orbit of $x$ under the action of $\textup{Gal}(\bar{K}/K)$ and $\deg(x)$ is the order of $O(x)$.

\medskip

The most basic height that can be obtained from a semipositive adelic metric
on $\O(1) \to \p^{d-1}_\Q$ is usually referred to as the \emph{naive height}. This height is induced by the adelic semipositive metric $\{|\cdot|_{\nv,v}\}_{v\in M_\Q}$ described in Section \ref{sec:semipos-nonarch}. For any point $x \in \bar{\Q}^{d-1}$, we have the following expression:
$$
h_\nv (x)
=\displaystyle
\frac{1}{\deg(x)}\sum_{v\in M_\mathbb{Q}}\sum_{z\in O(x)} \log^+|z|_v
 \ge 0~,$$
with the convention $|z|_v = |(z_1, \ldots, z_d)|_v \pe \max\{ |z_i|_v\}$. 
\medskip

For any polynomial $P_{c,a}$ with $(c,a)\in K^{d-1}$ for some number field $K$, one can then define a height function
$h_{P_{c,a}}(x) :=  \lim_{n\to \infty} \frac1{d^n} h_\nv (P^n_{c,a}(x))$. It is also a non-negative function, and it satisfies the  invariance property $h_{P_{c,a}} \circ P_{c,a} = d\, h_{P_{c,a}}$. The Northcott property implies that $\{ h_{P_{c,a}} = 0 \}$ coincides with the set of points with finite orbit, or in other words with the set of all preperiodic points.
Moreover, the following formula holds
$$
h_{P_{c,a}} (x) =\displaystyle
\frac{1}{\deg(x)}\sum_{v\in M_K}\sum_{z\in O(x)} 
n_v g_{c,a,v} (z)~,
$$
for any $x\in \bar{\Q}$.
\begin{theorem}\label{thm:defhgt}
Let $h_\bif : \p^{d-1}(\bar{\Q}) \to \R$
be the height function induced by the semipositive adelic metric given by Theorem~\ref{thm:adelic}.
Then for any $(c,a) \in \bar{\Q}^{d-1}$, we have
\begin{equation}\label{eqdefhgt}
h_\bif (c,a) = 
\frac{1}{\deg(c,a)}\sum_{v\in M_\Q} \sum_{z\in O(c,a)} G_v(z)~,
\end{equation}
where $O(c,a)$ is the orbit of $(c,a)$ under the action of $\textup{Gal}(\bar{\Q}/\Q)$ and $\deg(c,a)$ is the order of $O(c,a)$.
In particular,
$h_\bif (c,a)\ge0$, $\sup_{\bar{\Q}^{d-1}} | h_\bif - h_\nv| < \infty$, $h_\bif \le h_\ingram \le (d-1)\, h_\bif$,  and $h_\bif (c,a) = 0$ iff $P_{c,a}$ is postcritically finite.
\end{theorem}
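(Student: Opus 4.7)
The plan is to derive the formula~\eqref{eqdefhgt} directly from the definition of $h_{\bar{L}}$ by evaluating on the canonical section $\sigma = z_0$ of $\O(1)$, which does not vanish on the orbit $O(c,a)$ since the latter lies in the affine chart $\{z_0 \neq 0\}$. In this chart, the metric constructed in Sections~\ref{sec:semipos-arch} and~\ref{sec:semipos-nonarch} satisfies $|z_0|_{G_v}(c,a) = e^{-G_v(c,a)}$, so $-\log|\sigma(z)|_{G_v} = G_v(z)$, yielding the desired formula. Non-negativity then follows at once from $G_v \ge 0$ at every place.

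To control $|h_\bif - h_\nv|$, I would use the fact (Proposition~\ref{BHnonarch}) that $G_v \equiv \log^+\max\{|c|_v,|a|_v\}$ at every finite place of residual characteristic $> d+1$, so that $h_\bif$ and $h_\nv$ agree outside a finite set $S$ of places. For each $v \in S$, Propositions~\ref{propBH} (archimedean case) and~\ref{BHnonarch} (small primes) imply that $G_v - \log^+\max\{|c|_v,|a|_v\}$ extends to a continuous function on the compact space $\p^{d-1,\an}_{\C_v}$, hence is globally bounded by some constant $C_v$. Galois-averaging the resulting pointwise bound will then give $|h_\bif - h_\nv| \le \sum_{v \in S} C_v < \infty$.

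For the comparison with the Ingram height, the strategy is to exploit the elementary chain of inequalities
$$\max_{0\le i\le d-2} g_{c',a',v}(c'_i) \;\le\; \sum_{i=0}^{d-2} g_{c',a',v}(c'_i) \;\le\; (d-1)\max_{0\le i\le d-2} g_{c',a',v}(c'_i)$$
at each place $v$ and each conjugate $(c',a') \in O(c,a)$, then sum over $v$ and average over the Galois orbit. By~\eqref{eqdefhgt} the extreme sides collapse to $h_\bif(c,a)$ and $(d-1)h_\bif(c,a)$ respectively. Identifying the middle expression with $h_\ingram(c,a) = \sum_i h_{P_{c,a}}(c_i)$ is what I expect to be the main technical point: it amounts to checking that for each $i$,
$$h_{P_{c,a}}(c_i) \;=\; \tfrac{1}{\deg(c,a)}\sum_{(c',a')\in O(c,a)}\sum_v g_{c',a',v}(c'_i),$$
which follows from the closed formula for $h_{P_{c,a}}$ recalled just above the theorem, combined with Galois-equivariance of the local Green functions $g_{c,a,v}$: this allows one to reorganize the orbit sum over $O(c_i)$ into a sum over the orbit of the parameter $(c,a)$, where each conjugate parameter produces its own critical point.

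Finally, the postcritically finite characterization follows by combining the two-sided comparison with the Northcott property of the dynamical height $h_{P_{c,a}}$, which forces $h_{P_{c,a}}(c_i) = 0$ precisely when $c_i$ is preperiodic under $P_{c,a}$. Hence $h_\bif(c,a) = 0 \Leftrightarrow h_\ingram(c,a) = 0 \Leftrightarrow$ every critical point is preperiodic under $P_{c,a}$ $\Leftrightarrow$ $P_{c,a}$ is postcritically finite, as required.
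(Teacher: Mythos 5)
Your proposal is correct and follows essentially the same route as the paper: evaluate the height on the section cutting out $H_\infty$ to get~\eqref{eqdefhgt}, bound $G_v-\log^+\max\{|c|_v,|a|_v\}$ place by place (zero at all but finitely many places) for the comparison with $h_\nv$, use $\max\le\sum\le(d-1)\max$ for the Ingram comparison, and invoke Northcott for the postcritically finite characterization. The only difference is that you spell out the Galois-orbit reorganization identifying $\sum_i h_{P_{c,a}}(c_i)$ with the orbit-averaged sum of local Green functions, a step the paper leaves implicit.
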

Here we let $h_\ingram (c,a) \pe \sum_0^{d-2} h_{P_{c,a}}(c_i)$ be the height function used by P.~Ingram~\cite{Ingram}.

\begin{remark}
The set of postcritically finite polynomials with postcritical set of cardinality bounded from above is defined by polynomial equations with rational coefficients. It is hence an algebraic subvariety defined over $\Q$. This set is known to be zero dimensional hence finite since it is included in $\{ G_{\C_v} = 0\}$ for any place $v\in M_\Q$ and the latter set  is bounded by Propositions~\ref{propBH} or~\ref{BHnonarch}. In particular when $P_{c,a}$ is  postcritically finite then $c,a \in \bar{\Q}^{d-1}$. We refer to the recent paper by A.~Levy~\cite{Levy} for an extension of this result to the positive characteristic case. 
As observed by P.~Ingram, the estimate $\sup_{\bar{\Q}^{d-1}} | h_\bif - h_\nv| < \infty$
and the Northcott property implies a stronger statement, see~\cite[Corollaries~2,3]{Ingram}.
\end{remark}

\begin{proof}
The equation~\eqref{eqdefhgt} follows from the definition by taking a section of $\O(1)$
that vanishes along $H_\infty$. Since $G_v\ge 0$ for all $v$, we also have $h_\bif \ge0$. 

The difference between $h_\bif$ and the standard height function is uniformly bounded
since $G_v - \log^+\max \{|c|, |a|\}$ is bounded for each place $v$ and equal to $0$ if $v$ is finite and large enough.

Since $G_v\ge0$ at all places, it follows that 
$G_v (c,a) = \max \{ g_{c,a} (c_i)\} \le \sum_i g_{c,a,v} (c_i) \le (d-1)\, G_v (c,a)$, 
whence $h_\bif \le h_\ingram \le (d-1)\, h_\bif$.

Suppose $h_\bif(c,a) =0$. Then $h_\ingram (c,a) =0$, hence
$h_{P_{c,a}}(c_i) = 0$ for all $i$. By Northcott's property, $c_i$ is preperiodic.
Conversely for any postcritically finite polynomial with $(c,a)\in \bar{\Q}^{d-1}$ 
and for any place the orbit of each critical point is bounded hence
$G_v(c,a) = \max \{ g_{c,a,v}(c_i) \} =0$, and $h_\bif(c,a) =0$.
\end{proof}


\subsection{Yuan's equidistribution Theorem.}
Let $\bar{L}$ be any semipositive adelic metric on an ample line bundle $L\to X$
over a projective variety $X$ of dimension $d-1$ over a number field $K$. Recall that all analytic spaces
$X_v$ are compact for any archimedean and non-archimedean places $v\in M_K$. 

It is possible to define for each $v$ a positive measure $c_1(\bar{L})_v^{d-1}$ on $X_v$. In the archimedean case, in a local trivialization 
where the metric can be written under the form $|\cdot| e^{-g}$ with $g$ psh and continuous
then $c_1(\bar{L})_v^{d-1}$ is equal to the Monge-Amp\`ere measure $(dd^c)^{d-1} g$. In the non-archimedean case, the construction is more involved and we refer to~\cite{ACL} for detail.

\smallskip

We say that a sequence of $0$-dimensional subvarieties $Z_m\subset X$ that are defined over $K$ (or equivalently finite sets that are invariant under $\textup{Gal} (\bar{K}/K)$) is \emph{generic} if for any divisor $D \subset X$ defined over $K$ then $Z_m\cap D = \emptyset$ for all $m$ large enough.
It is called \emph{small} if $h_{\bar{L}}(Z_m):= \frac1{\card (Z_m)}\sum_{x\in Z_m} h_{\bar{L}}(x)$
tends to $0$ as $m\to \infty$.

We can now state the following slight generalization of Yuan's theorem.
\begin{theorem}[\cite{Yuan}]
Let $K$ be a number field, $X$ be a projective variety over $K$ of dimension $d-1$, $L\to X$ an ample line bundle over $X$ equipped with an adelic semipositive metric. 
Let $Z_m\subset X(\bar{K})$ be any zero-dimensional subvariety defined over $K$ which is generic and small for the height $h_{\bar{L}}$. 

Then, for any place $v\in M_K$,  we have
\begin{equation}\label{eq:yuan}
\frac{1}{\card(Z_m)}\sum_{x\in Z_m}\delta_x \longrightarrow \frac1{\deg (L)} \, c_1(\bar{L})_v^{d-1}
\end{equation}
on $X_v$ in the weak topology of measures.
\label{tmyuan}
\end{theorem}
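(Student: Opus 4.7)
The plan is to adapt the variational strategy initiated by Szpiro--Ullmo--Zhang and extended to continuous semipositive adelic metrics by Yuan. First I would reduce the weak convergence~\eqref{eq:yuan} to the statement that for every $\phi$ in a dense subclass of $C(X_v)$,
$\frac{1}{\card(Z_m)}\sum_{x\in Z_m}\phi(x) \to \frac{1}{\deg L}\int_{X_v}\phi\,c_1(\bar L)_v^{d-1}.$
The natural dense subclass is that of \emph{model test functions}, i.e.\ $\phi$'s of the form $\phi = \log\|s\|'_v - \log\|s\|_v$ where $s$ is a local section and $\|\cdot\|_v, \|\cdot\|'_v$ are two semipositive adelic metrics on the trivial bundle which coincide away from $v$. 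The density of such functions, trivial at archimedean places but a theorem of Gubler at finite places, is already a non-trivial input.

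Next, for such a model $\phi$ and a small real parameter $t$, I would consider the perturbed adelic metric $\bar L(t) \pe \bar L \otimes \bar{\O}(t\phi)$, where $\bar{\O}(t\phi)$ is the trivial line bundle equipped with $e^{-t\phi}$ at $v$ and the trivial metric elsewhere. For $|t|$ small enough $\bar L(t)$ is still semipositive adelic on $L$, the associated height satisfies the affine formula
$h_{\bar L(t)}(Z_m) = h_{\bar L}(Z_m) + \frac{t}{\card(Z_m)}\sum_{x\in Z_m}\phi(x),$
and the arithmetic Hilbert--Samuel formula of Gillet--Soul\'e, in the form extended to continuous semipositive adelic metrics by Yuan via arithmetic volumes, yields the key expansion
$\hat c_1(\bar L(t))^d = \hat c_1(\bar L)^d + d\,t\int_{X_v}\phi\,c_1(\bar L)_v^{d-1} + O(t^2).$

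Finally I would combine this with Zhang's inequality on successive minima, namely $e_1(\bar L(t)) \geq \hat c_1(\bar L(t))^d/(d\deg L)$. The generic hypothesis implies that $\liminf_m h_{\bar L(t)}(Z_m) \geq e_1(\bar L(t))$ for every $t$, while the smallness of $Z_m$ applied at $t=0$ together with Zhang's inequality forces $\hat c_1(\bar L)^d = 0$. Chaining all of this together and using $h_{\bar L}(Z_m)\to 0$,
$\liminf_m \frac{t}{\card(Z_m)}\sum_{x\in Z_m}\phi(x) \geq \frac{t}{\deg L}\int_{X_v}\phi\,c_1(\bar L)_v^{d-1} + O(t^2).$
Dividing by $t>0$ and letting $t\to 0^+$ produces the lower bound for $\liminf \frac{1}{\card(Z_m)}\sum \phi(x)$; applying the same argument with $-\phi$ (or with $t<0$) gives the matching upper bound, so the limit exists and equals the desired integral. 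The main obstacle is not the formal manipulation above but rather the analytic input: establishing differentiability of the arithmetic volume along semipositive adelic perturbations and ensuring that semipositivity is preserved under the perturbation $\bar L \mapsto \bar L(t)$ for limits of model metrics. This is precisely Yuan's deep contribution, which relies on a careful approximation of continuous semipositive metrics by smooth (resp.\ model) positive ones and sharp control of arithmetic volumes along such sequences; the "slight generalization" alluded to in the statement amounts to checking that this approximation scheme goes through on the possibly singular projective variety $X$.
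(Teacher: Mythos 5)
The paper does not prove this statement at all: it is quoted verbatim (up to a slight generalization) from Yuan's work, so there is no internal argument to compare yours against. That said, your outline is a faithful reconstruction of the actual proof in the literature: the Szpiro--Ullmo--Zhang variational method, with (i) reduction to model test functions (Gubler's density theorem at finite places, and Chambert-Loir's measures to make sense of $c_1(\bar{L})_v^{d-1}$ there), (ii) the perturbation $\bar L(t)$ and the affine dependence of $h_{\bar L(t)}(Z_m)$ on $t$, (iii) the expansion of $\hat c_1(\bar L(t))^{d}$ to first order in $t$ (which is in fact formal multilinearity of arithmetic intersection numbers; the deep input is rather Yuan's arithmetic bigness/volume inequality, which replaces Zhang's inequality precisely because $\bar L(t)$ need not stay semipositive), and (iv) the fundamental inequality $\liminf_m h_{\bar L(t)}(Z_m)\ge e_1(\bar L(t))$ for generic sequences, followed by letting $t\to 0^{\pm}$. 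You correctly flag that the crux is the differentiability/lower bound for the arithmetic volume along the perturbation, which is Yuan's contribution and is legitimately treated as a black box here. One small correction: the ``slight generalization'' in the paper's statement is not about singularities of $X$; it is the passage from Galois orbits of single points to arbitrary Galois-invariant zero-dimensional subvarieties $Z_m$ (finite unions of orbits), for which the fundamental inequality and the rest of the argument go through unchanged.
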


\section{Transversality of critical orbit relations in $\poly_d$.}\label{sec:trans}

\par This section is devoted to transversality results in the family $\poly_d$ of \emph{all} polynomials. 
This section is an application of Epstein's general theory \cite{epstein2} to our context. 
We follow closely \cite{buffepstein} and adapt it to our situation.


\subsection{The family $\poly_d$ of all polynomials.}
The space $\poly_d$ of all polynomials of degree $d$ is a complex manifold of dimension $d+1$ which is isomorphic to $\C^*\times\C^d$. We denote by $\mathcal{C}(P)\subset \C$ the \emph{critical set} of a polynomial $P$, and by $\mathcal{P}(P)$ its \emph{postcritical set}, i.e.
\begin{center}
$\displaystyle\mathcal{P}(P)=\bigcup_{n\geq1}P^n(\mathcal{C}(P))$.
\end{center}
A simple critical point is a point $c\in \C$ for which $P'(c) =0$ and $P''(c) \neq 0$.
Suppose $P\in\poly_d$ has only simple critical points. Then there exists a neighborhood $V_P\subset\poly_d$ and $(d-1)$ holomorphic functions $c_0,\ldots,c_{d-2}:V_P\longrightarrow\C$ such that $\{c_0(Q),\ldots,c_{d-2}(Q)\}=\mathcal{C}(Q)$ for all  $Q\in V_P$.

~

\par The group $\Aut(\C)= \{ az +b , \, a \in \C^*, \, b \in \C\}$ of affine transformations acts on $\poly_d$ by conjugacy. We shall denote by  $\mathcal{O}(P)$ the orbit of $P\in\poly_d$ under this action. It is a (closed) complex submanifold of $\poly_d$ of dimension $2$.


\subsection{Vector fields and quadratic differentials.}

\paragraph*{Vector fields.}
A tangent vector to $\poly_d$ at $P$ is an equivalence class of holomorphic
maps $\phi : \D \to \poly_d$ such that $\phi(0) = P$ under the relation 
$\phi\sim \psi$ iff $\phi'(0) = \psi'(0)$. The vector space of all tangent vectors at $P$ is denoted by $ T_P\poly_d$.

A tangent vector $\zeta\in T_P\poly_d$ can be identified to a 
section of the line bundle $P^* (T\p^1)$, where $T\p^1$ denotes 
the tangent space of the Riemann sphere. 
Concretely we view $\zeta$ as a holomorphic function  $z \in \C \mapsto \zeta(z) \in T_{P(z)} \C$ that extends to $\infty$ and vanishes there.
To any tangent vector $\zeta\in T_P\poly_d$, we may thus attach a rational
vector field on $\p^1$ with poles included in $\mathcal{C}(P)$:
\begin{center}
$\displaystyle\eta_\zeta(z)\pe  -D_zP^{-1} \cdot \zeta(z) \in  T_{z} \C$.
\end{center}
It vanishes at infinity, and 
when  $P\in\poly_d$ has only simple critical points, then $\eta_\zeta$ has only simple 
poles.

%
%

A vector field on  a finite subset $X\subset\C$ is a collection of tangent vectors $\theta(z) \in T_z\C$ for any $z\in X$. We denote by $\mathcal{T}(X)$ the space of all vector fields on $X$.

Observe that if $\theta$ is a vector field defined on $P(E)$ with $E\subset\C\setminus \mathcal{C}(P)$, then we can define a vector field $P^*\theta$ on $X$ by setting for any $z\in X$
\begin{center}
$P^*\theta(z)\pe D_zP^{-1} \cdot\theta (P(z))$.
\end{center}
\begin{lemma}\label{lem:vf}
Let $\theta$ be any holomorphic vector field defined in a neighborhood of $P(c)\in P(\mathcal{C}(P))$.
Then $P^*\theta$ is a meromorphic vector field in a neighborhood of $c$, and  $\theta(c) =0$ iff $\theta( P(c)) = 0$. When $c$ is a simple critical point, then  $P^*\theta$ has at most a simple pole, and its constant and polar parts  only depend  on $\theta(P(c))$.
\end{lemma}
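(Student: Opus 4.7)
The plan is to work in a single local coordinate $w = z - c$ around $c$ and perform explicit Laurent expansions. Writing $\theta = f(u)\,\partial_u$ for a holomorphic function $f$ near $0$, in a coordinate $u$ centered at $P(c)$, the definition $P^*\theta = DP^{-1} \cdot (\theta \circ P)$ becomes
\begin{equation*}
P^*\theta(c+w) \;=\; \frac{f\bigl(P(c+w) - P(c)\bigr)}{P'(c+w)}\,\partial_w.
\end{equation*}
All three assertions then reduce to controlling the orders of vanishing of the numerator and denominator.

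First, I would verify meromorphy and the equivalence $\theta(P(c)) = 0$ iff $P^*\theta(c) = 0$. The denominator $P'(c+w)$ has a zero of finite order $k := \ord_c(P')$, so the quotient is meromorphic near $c$. If $f(0) = \theta(P(c)) = 0$, write $f(u) = u\,\tilde f(u)$ with $\tilde f$ holomorphic; since $P(c+w) - P(c)$ vanishes to order $k+1$ at $w=0$, the pole of $1/P'$ is cancelled and $P^*\theta$ extends holomorphically with a zero of order at least one. Conversely if $f(0) \neq 0$, then $P^*\theta$ has a genuine pole of order $k$.

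For the sharper assertion at a simple critical point, where $k = 1$ and $\alpha := P''(c) \neq 0$, I would carry out an explicit truncated division. Setting $\beta := P'''(c)$, the expansions
\begin{equation*}
P(c+w) - P(c) = \tfrac{\alpha}{2} w^2 + \tfrac{\beta}{6} w^3 + O(w^4), \qquad P'(c+w) = \alpha w + \tfrac{\beta}{2} w^2 + O(w^3),
\end{equation*}
together with $f(u) = f(0) + f'(0)\,u + O(u^2)$ and $\frac{1}{P'(c+w)} = \frac{1}{\alpha w}\bigl(1 - \tfrac{\beta}{2\alpha} w + O(w^2)\bigr)$, give after multiplication
\begin{equation*}
P^*\theta(c+w) \;=\; \left[\,\frac{f(0)}{\alpha\, w} \;-\; \frac{f(0)\,\beta}{2\alpha^2} \;+\; O(w)\,\right]\partial_w.
\end{equation*}
The pole is thus simple, and the coefficients of $w^{-1}$ and $w^0$ are explicit linear functions of $f(0) = \theta(P(c))$.

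The main (minor) obstacle is organizing this division carefully, so as to see that the $f'(0)$ contribution is carried by the $w^2$ term of $f\circ (P - P(c))$ and is therefore absorbed into the remainder $O(w)$ after division by the simple zero of $P'$. This is the precise mechanism by which the polar and constant parts of $P^*\theta$ depend only on $\theta(P(c))$ and on the $3$-jet of $P$ at $c$, and not on any higher-order information about $\theta$.
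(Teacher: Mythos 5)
Your proof is correct and takes essentially the same route as the paper's: a direct local computation of the quotient $\theta(P(z))/P'(z)$ and a comparison of vanishing orders of numerator and denominator. The only difference is that the paper first normalizes coordinates so that $P$ becomes $z\mapsto z^k$, which makes the division immediate, whereas you perform the Laurent expansion in an arbitrary coordinate centered at $c$ --- slightly longer but equally valid, and it has the small virtue of showing explicitly how the constant part $-f(0)\beta/(2\alpha^2)$ depends on $\theta(P(c))$ in a general chart.
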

\begin{proof}
In suitable coordinates at $c$ and $P(c)$, we may write $w= P(z) = z^k$.
If $\theta (w) = (a + wb(w))\frac{\partial}{\partial w}$, then 
$P^*\theta (z) = \frac1{k}(a + z^kb(z^k))z^{1-k}\frac{\partial}{\partial z}$. When $c$ is simple, then $k=2$ and the result follows.
\end{proof}

\paragraph*{Quadratic differentials.}
Recall that a quadratic differential is locally given by $a(z) dz^2$ with $a$ holomorphic.
For any finite subset $X\subset\C$, we denote by $\mathcal{Q}(X)$ the space of meromorphic quadratic differentials on $\p^1$ with at most simple poles in $X \cup \{ \infty\}$.
It follows from Riemann-Roch that $\dim\mathcal{Q}(X)=\max\{\card(X)-2,0\}$.

 If $q\in\mathcal{Q}(X)$ and $\theta$ is a holomorphic vector field defined in a neighborhood of $x\in X$, the product $q\otimes \theta\pe q(\theta,\cdot)$ is well-defined as a meromorphic $1$-form. 

Now pick $q\in\mathcal{Q}(X)$ and any vector field $\tau\in\mathcal{T}(X)$, and choose a holomorphic vector field $\theta$ in a neighborhood of $X$ such that $\theta|_X = \tau$. 
Since $q$ has at most simple poles by definition, the residue at $x$ 
of $q \otimes \theta$ only depends  on $\theta(x)$. We  can thus define the following pairing:
\begin{eqnarray*}
\langle q,\tau\rangle\pe\sum_{x\in X}\textup{R\'es}_x(q\otimes\tau)=\sum_{x\in X}\textup{R\'es}_x(q\otimes\theta)
\end{eqnarray*}

Any postcritically finite polynomial $P\in\poly_d$ induces an operator $P_*$  on $\mathcal{Q}(\mathcal{P}(P))$. For any quadratic differential $q\in\mathcal{Q}(\mathcal{P}(P))$, for any $w\in\C$ and any $x_1,x_2\in T_{w}\p^1$,
 we set
\begin{eqnarray*}
(P_*q)_w(x_1,x_2)\pe\sum_{P(z)=w}q_z\left( D_zP^{-1} \cdot x_1, D_zP^{-1} \cdot x_2\right).
\end{eqnarray*}
In this way we obtain a meromorphic quadratic differential $P_*q$ on $\p^1$, and 
it is not difficult to check that $P_*q\in\mathcal{Q}(\mathcal{P}(P))$.

\par 
A key result from~\cite{buffepstein,epstein2} states that 
the linear operator
\begin{center}
$\nabla_P\pe \id-P_* :\mathcal{Q}(\mathcal{P}(P))\longrightarrow\mathcal{Q}(\mathcal{P}(P))$,
\end{center}
is \emph{bijective}.

\subsection{Guided vector fields.}
\par Following \cite{buffepstein}, we characterize those tangent vectors in $T_P\poly_d$ that 
are tangent to $\O(P)$. First we introduce the notion of guided vector fields. 
\begin{definition}\label{def:guided}
We say that a vector field $\tau\in\mathcal{T}(\mathcal{P}(P))$ is \emph{guided} by $\zeta\in T_P\poly_d$ if 
\begin{center}
$\tau=P^*\tau +\eta_\zeta$ on $\mathcal{P}(P)$, \ and \ $\tau\circ P=\zeta$ on $\mathcal{C}(P)$.
\end{center}
\end{definition}
Some explanations are in order. The equality $\tau\circ P=\zeta$ means that $\tau (P(z)) = \zeta(z)$ in $T_{P(z)}\C$ for all $z\in \mathcal{C}(P)$. 
The vector field $P^*\tau$ is well-defined at each point $z \in \mathcal{P}(P) \setminus  \mathcal{C}(P)$. At a point $z \in \mathcal{P}(P) \cap  \mathcal{C}(P)$, it follows from
Lemma~\ref{lem:vf} that the constant term and the polar part of $P^*\tau$ is well-defined. 
The equality $\tau=P^*\tau +\eta_\zeta$ says that the constant term and the polar parts
of both terms are identical.

Our aim is to show

\begin{proposition}
Let $P\in\poly_d$ be a postcritically finite polynomial with simple critical points, and not conjugated to $z \mapsto z^2$. Pick $\zeta\in T_P\poly_d$.

Then $\zeta\in T_P\mathcal{O}(P)$ iff there exists a vector field
$\tau\in\mathcal{T}(\mathcal{P}(P))$ that is guided by $\zeta$.
\label{propguided}
\end{proposition}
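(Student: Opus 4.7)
My approach is to exploit the concrete realization of $T_P\mathcal{O}(P)$ as the image of affine vector fields, and to obtain the reverse implication through a dimension count powered by the bijectivity of $\nabla_P$. Let $\mathfrak{a} = \{\xi(z) = \alpha z + \beta\}$ denote the $2$-dimensional space of affine vector fields on $\C$, and consider the linear map $A : \mathfrak{a} \to T_P\poly_d$ given by $A(\xi)(z) := \xi(P(z)) - P'(z)\,\xi(z)$; its image is precisely $T_P\mathcal{O}(P)$. Under the identification $\zeta \leftrightarrow \eta_\zeta$, this reads $\eta_{A(\xi)} = \xi - P^*\xi$ as rational vector fields on $\C$, the right-hand side having simple poles exactly along $\mathcal{C}(P)$. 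Since $P$ has degree $\ge 2$, the map $A$ is injective, and hence $\dim T_P\mathcal{O}(P) = 2$.

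\emph{The forward direction} is direct: assuming $\zeta = A(\xi)$ for some affine $\xi$, I set $\tau := \xi\big|_{\mathcal{P}(P)} \in \mathcal{T}(\mathcal{P}(P))$. The global identity $\eta_\zeta = \xi - P^*\xi$ restricts on $\mathcal{P}(P)$ to the first guiding equation $\tau = P^*\tau + \eta_\zeta$ in the sense of Definition~\ref{def:guided}: at a regular postcritical point this is a pointwise equality, and at a critical postcritical point $c$ the polar and constant parts of $P^*\tau$ depend (by Lemma~\ref{lem:vf}) only on $\tau(P(c)) = \xi(P(c))$, so they agree with those extracted from $\xi - \eta_\zeta$. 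The second condition $\tau \circ P = \zeta$ on $\mathcal{C}(P)$ is immediate from $\zeta(c) = \xi(P(c)) - P'(c)\xi(c) = \xi(P(c))$ since $P'(c) = 0$.

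\emph{The reverse direction} is the substantive content and I follow the Epstein--Buff framework. Let $E \subset T_P\poly_d \times \mathcal{T}(\mathcal{P}(P))$ be the linear subspace of pairs $(\zeta, \tau)$ such that $\tau$ is guided by $\zeta$, and $W \subset T_P\poly_d$ its projection. A direct expansion using Lemma~\ref{lem:vf} shows that at a critical postcritical point the polar part of the equation $\tau = P^*\tau + \eta_\zeta$ coincides with the critical-orbit matching $\tau\circ P=\zeta$, so the independent defining equations for $E$ number exactly $\#\mathcal{P}(P) + \#\mathcal{C}(P)$. Compared to the $(d+1) + \#\mathcal{P}(P)$ free parameters in the ambient space, this yields $\dim E \ge 2$. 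The key step is then to dualize: the residue pairing $\langle q, \tau\rangle = \sum_{z \in \mathcal{P}(P)} \mathrm{Res}_z(q \otimes \tau)$ together with the adjunction $\langle P_*q, \tau\rangle = \langle q, P^*\tau\rangle$ identifies the transpose of the operator $\tau \mapsto \tau - P^*\tau$ appearing in the guiding equation with $\nabla_P = \id - P_*$ on $\mathcal{Q}(\mathcal{P}(P))$; the bijectivity of $\nabla_P$ recalled above then forces the linear system defining $E$ to have full rank, giving $\dim W = \dim E = 2$ and, combined with the forward inclusion, the equality $W = T_P\mathcal{O}(P)$.

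The main obstacle is precisely this careful polar/constant decomposition at critical postcritical points and its translation into the adjoint of $\nabla_P$. One has to verify (i) that the polar equation is really redundant with the critical-orbit equation, so that the expected codimension in the ambient space is correct, and (ii) that the remaining constant-part equation is exactly the datum paired against $\mathcal{Q}(\mathcal{P}(P))$ via the residue. The exclusion of $P$ conjugate to $z^2$ enters at the level of Epstein's bijectivity result, where it rules out the degenerate low-degree case.
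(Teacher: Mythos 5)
Your forward direction is fine and matches the paper's (which realizes $\zeta\in T_P\mathcal{O}(P)$ as $\dot\phi$ for $\phi=\psi^{-1}\circ P\circ\psi$ and sets $\tau=\dot\psi|_{\mathcal{P}(P)}$). The reverse direction, however, has a genuine gap at its central step. The assertion that ``the bijectivity of $\nabla_P$ forces the linear system defining $E$ to have full rank'' is not a deduction one can make by transposition: the residue pairing couples $\mathcal{Q}(\mathcal{P}(P))$, of dimension $\card(\mathcal{P}(P))-2$, with $\mathcal{T}(\mathcal{P}(P))$, of dimension $\card(\mathcal{P}(P))$, so it is degenerate with a $2$-dimensional kernel on the vector-field side, and $\id-P_*$ being bijective on quadratic differentials says nothing directly about surjectivity of the $(\card(\mathcal{P}(P))+\card(\mathcal{C}(P)))$-equation system cutting out $E$. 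What the duality actually yields, after a residue-theorem computation (Lemma~\ref{lmdiffquad}, which itself needs Lemma~\ref{lmprelim} to kill the contributions at $\mathcal{C}(P)$ and the vanishing of the sum of residues of $q\otimes\eta_\zeta$ to insert $\eta_\zeta$ — your bare ``adjunction'' $\langle P_*q,\tau\rangle=\langle q,P^*\tau\rangle$ glosses over exactly this), is that a guided $\tau$ satisfies $\langle q,\tau\rangle=0$ for all $q$, hence lies in the kernel of the pairing. The identification of that kernel with restrictions of \emph{global} holomorphic vector fields vanishing at $\infty$ (\cite[Lemma 7]{buffepstein}) is the heart of the argument: it is what produces the affine $\theta$ extending $\tau$, after which $\theta=P^*\theta+\eta_\zeta$ propagates from $\mathcal{P}(P)\cup\{\infty\}$ to all of $\p^1$ and \cite[Proposition 1]{buffepstein} gives $\zeta\in T_P\mathcal{O}(P)$. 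None of this appears in your proposal.

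Two further points. Even if you had $\dim E=2$, concluding $\dim W=2$ requires injectivity of the projection $E\to W$, i.e.\ that no nonzero $\tau$ is guided by $\zeta=0$; this is not addressed, and proving it essentially requires the extension argument above. And the hypothesis that $P$ is not conjugate to $z\mapsto z^2$ does not enter through Epstein's bijectivity result: it is used to guarantee $\card(\mathcal{P}(P))\ge 2$, so that two affine vector fields agreeing on $\mathcal{P}(P)\cup\{\infty\}$ must coincide everywhere. Your equation count giving $\dim E\ge 2$ (including the observation that the polar part of $\tau=P^*\tau+\eta_\zeta$ at a critical postcritical point reproduces $\tau\circ P=\zeta$, which is correct by Lemma~\ref{lem:vf}) is a nice consistency check, but it only bounds $E$ from below, which is the wrong direction for the implication you need.
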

\begin{proof}
Suppose $\zeta\in T_P\mathcal{O}(P)$, and pick a holomorphic map
$\psi: \D \to \Aut(\C)$ with $\psi_t (0) = \id$ such that $\phi'(0) = \zeta$ with  
$\phi= \psi^{-1} \circ P \circ \psi$. Write $\xi \pe \psi'(0) $. 
A direct computation yields
$$
\zeta(z) = D_zP\cdot  \xi(z)  - \xi ( P (z))
$$
At a critical point this equation implies $\xi \circ P = \zeta$.
Pulling back by $P$, we also get $\xi=P^*\xi +\eta_\zeta$ everywhere on $\p^1$.
We conclude by setting $\tau \pe \xi|_{\mathcal{P}(P)}$.

\medskip

For the converse statement we shall rely on the 
\begin{lemma}
Let $\theta$ be any holomorphic vector field defined in a 
neighborhood of $\mathcal{P}(P)$. 
If $\tau \pe \theta|_{\mathcal{P}(P)}$ is guided by 
some $\zeta \in T_P \poly_d$, then $P^*\theta + \eta_\zeta$ is holomorphic
near $\mathcal{C}(P)$.
\label{lmprelim}
\end{lemma}

\begin{lemma}
Let $P\in\poly_d$ be postcritically finite with simple critical points. Assume that $\tau\in\mathcal{T}(\mathcal{P}(P))$ is guided by $\zeta\in T_P\poly_d$. Then, for all $q\in\mathcal{Q}(\mathcal{P}(P))$, we have
\begin{center}
$\langle \nabla_P q,\tau\rangle=0$.
\end{center}
\label{lmdiffquad}
\end{lemma}
Since $\nabla_P$ is invertible, this Lemma ensures that $\langle q,\tau\rangle=0$ for all $q\in\mathcal{Q}(\mathcal{P}(P))$. Extend $\tau$ to a vector field on $\mathcal{P}(P) \cup \{ \infty\}$ by setting $\tau(\infty ) = 0$. Then $\langle q,\tau\rangle=0$   continues to hold for all $q\in\mathcal{Q}(\mathcal{P}(P))$, and \cite[Lemma 7]{buffepstein} implies that  $\tau$
is the restriction of a globally defined holomorphic vector field $\theta$  to $\p^1$ that vanishes at $\infty$. 

Since $\tau$ is guided by $\zeta$, Lemma~\ref{lmprelim} implies that 
$P^*\theta+\eta_\zeta$ is holomorphic on $\p^1$ and vanishes at $\infty$.
When $\mathcal{P}(P)$ has at least $2$ distinct points, $\theta=P^*\theta+\eta_\zeta$ on $\mathcal{P}(P)\cup \{ \infty\}$ implies the equality everywhere on $\p^1$.
We then conclude by applying~\cite[Proposition 1]{buffepstein}.
Since $P$ is supposed to have only simple critical points, $\card(\mathcal{P}(P)) =1$
implies $d =2$ and the critical point is fixed. Whence $P$ is conjugated to $z^2$.
\end{proof}

\begin{proof}[Proof of Lemma~\ref{lmprelim}]
Since $c$ is a simple critical point, we can choose coordinates $z$ at $c \in \mathcal{C}(P)$ and $w$ at $P(c)$ such that  $w = P(z) = z^2$. 
Write $\zeta(z) = (a + O(z)) \frac{\partial}{\partial w}$, so that 
$\eta_\zeta(z) = - D_zP^{-1}\cdot \zeta(z) = (- \frac{a}{2z} + O(z)) \frac{\partial}{\partial z}$. 
Since $\tau = \theta|_{\mathcal{P}(P)}$ is guided by $\zeta$, we have $\theta (P(c)) = \tau ( P(c)) = \zeta(c) = a\frac{\partial}{\partial w}$. Whence  $\theta (w) = (a + O(w) ) \frac{\partial}{\partial w}$, and 
$P^*\theta (z) = \frac12 ( a z^{-1}  + O(z) ) \frac{\partial}{\partial z}$, see the proof of Lemma~\ref{lem:vf}. We conclude that
$P^*\theta (z) + \eta_\zeta= O(z)  \frac{\partial}{\partial z}$ is holomorphic.
\end{proof}

\begin{proof}[Proof of Lemma~\ref{lmdiffquad}]
Let $\theta$ be a holomorphic vector field defined in a neighborhood of $\mathcal{P}(P)$  which coincides with $\tau$ on $\mathcal{P}(P)$. Let $q\in\mathcal{Q}(\mathcal{P}(P))$, then
\begin{eqnarray*}
\langle P_* q,\tau\rangle & = & \sum_{x\in\mathcal{P}(P)}\textup{R\'es}_x((P_*q)\otimes \tau) = \sum_{x\in\mathcal{P}(P)}\textup{R\'es}_x((P_*q)\otimes \theta).
\end{eqnarray*}
Since $\eta_\zeta$ is  meromorphic on $\p^1$ with poles in
$\mathcal{C}(P)$, the $1$-form $q\otimes \eta_\zeta$ is also meromorphic on $\p^1$ with poles in   $\mathcal{P}(P)\cup \mathcal{C}(P)$ and  the sum of its residues  vanishes. Applying  the change of variable formula, we get
\begin{eqnarray}
\langle P_* q,\tau\rangle=\sum_{x\in\mathcal{P}(P)\cup\mathcal{C}(P)}\textup{R\'es}_x(q\otimes P^*\theta) =  \sum_{x\in\mathcal{P}(P)\cup\mathcal{C}(P)}\textup{R\'es}_x(q\otimes (P^*\theta+\eta_\zeta)).
\label{diffquad}
\end{eqnarray}
Now, since $\tau$ is guided, $P^*\theta+\eta_\zeta=P^*\tau+\eta_\zeta=\tau$ on $\mathcal{P}(P)$, and $P^*\theta+\eta_\zeta$ is holomorphic in a neighborhood of 
$\mathcal{C}(P)$ by Lemma~\ref{lmprelim}.
This gives $\langle P_* q,\tau\rangle=\langle q,\tau\rangle$, which ends the proof.
\end{proof}

\subsection{Transversality at strictly postcritically finite parameters.}
Pick any strictly postcritically finite polynomial $P$ with simple critical points, and choose a neighborhood $V_P\subset \poly_d$ of $P$  with holomorphic functions 
 $c_0, \ldots , c_{d-2}: V_P \to \C$ such that $\{ c_0(Q) , \ldots , c_{d-2}(Q) \} = \mathcal{C}(Q)$ for all $Q \in V_P$.

 For any $0\leq i\leq d-2$, choose $m_i>n_i\geq1$ such that
\begin{center}
$P^{m_i}(c_i(P))=P^{n_i}(c_i(P))$.
\end{center}
Observe that this implies the point $P^{n_i}(c_i(P))$ to be a periodic point for $P$ of period dividing $m_i-n_i$. We thus define the following holomorphic maps $\mathfrak{n},\mathfrak{m}:V_P \subset \poly_d\longrightarrow \C^{d-1}$: 
\begin{center}
$\mathfrak{n}(Q)\pe(Q^{n_0}(c_0),\ldots,Q^{n_{d-2}}(c_{d-2}))$ and $\mathfrak{m}(Q)\pe(Q^{m_0}(c_0),\ldots,Q^{m_{d-2}}(c_{d-2}))$,
\end{center}
and  adapt the arguments of \cite{buffepstein} to prove the following
\begin{theorem}
Suppose $P\in\poly_d$ is strictly postcritically finite with simple critical points. 
Assume moreover that 
\begin{enumerate}
\item
the orbits of the critical points are disjoint: for any two critical points $c_i\neq c_j$ then 
$P^k(c_i) \neq P^l(c_j)$ for all $k,l \ge 0$;
\item
for each $i$, $P^{n}(c_i)$ is periodic iff $n\ge n_i$ and its period is then exactly equal to $m_i-n_i$.
\end{enumerate}
Then we have
\begin{center}
$\ker(D_P\mathfrak{n}-D_P\mathfrak{m})=T_P\mathcal{O}(P)$.
\end{center}
\label{tmAdam}
\end{theorem}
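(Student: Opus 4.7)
The inclusion $T_P\mathcal{O}(P)\subset \ker(D_P\mathfrak{n}-D_P\mathfrak{m})$ is immediate: locally on $\mathcal{O}(P)$, writing $Q=\phi^{-1}\circ P\circ \phi$ for $\phi\in\Aut(\C)$ close to the identity, we have $c_i(Q)=\phi^{-1}(c_i(P))$ (for a locally consistent labeling) and hence $Q^k(c_i(Q))=\phi^{-1}(P^k(c_i(P)))$ for every $k$. Since $P^{n_i}(c_i)=P^{m_i}(c_i)$, the same equality persists for all such $Q$, so $\mathfrak{n}\equiv\mathfrak{m}$ on $\mathcal{O}(P)\cap V_P$, which gives the inclusion.

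For the reverse inclusion, given $\zeta\in\ker(D_P\mathfrak{n}-D_P\mathfrak{m})$ the plan is to construct a vector field $\tau\in\mathcal{T}(\mathcal{P}(P))$ guided by $\zeta$ and to apply Proposition~\ref{propguided}. Hypotheses (1) and (2) ensure that $\mathcal{P}(P)=\bigsqcup_{i=0}^{d-2}\{P^k(c_i):1\le k\le m_i-1\}$ is a disjoint union of ``lollipops'' (a tail of length $n_i-1$ followed by a cycle of length $m_i-n_i$), and that $\mathcal{C}(P)\cap\mathcal{P}(P)=\emptyset$. Using $\eta_\zeta(z)=-\zeta(z)/P'(z)$ and $P^*\tau(z)=\tau(P(z))/P'(z)$, Definition~\ref{def:guided} becomes the single recurrence
\[
\tau(P(y))=P'(y)\tau(y)+\zeta(y),\qquad y\in\mathcal{C}(P)\cup\mathcal{P}(P),
\]
which at $y=c_i$ (where $P'(c_i)=0$) reduces to $\tau(P(c_i))=\zeta(c_i)$. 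Iterating this relation forward along the orbit of each critical point forces
\[
\tau(P^k(c_i))=\sum_{j=0}^{k-1}\zeta(P^j(c_i))\prod_{l=j+1}^{k-1}P'(P^l(c_i))\qquad (k\ge 1),
\]
so $\tau$ is unambiguously defined along the tail; closing up the cycle at $P^{n_i}(c_i)=P^{m_i}(c_i)$, however, requires the values obtained for $k=n_i$ and $k=m_i$ to coincide.

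The key observation is that the right-hand side of the displayed formula at $k=n_i$ is exactly $[D_P\mathfrak{n}(\zeta)]_i$. Indeed, differentiating $t\mapsto Q_t^{n_i}(c_i(Q_t))$ at $t=0$ in the direction $\zeta$ produces the same sum together with an extra term $(P^{n_i})'(c_i)\,\dot c_i(0)$ that vanishes thanks to $P'(c_i)=0$; similarly for $k=m_i$. Hence the closing-the-cycle condition on orbit $i$ is precisely $[D_P\mathfrak{n}(\zeta)]_i=[D_P\mathfrak{m}(\zeta)]_i$, which holds by assumption. Since distinct orbits are disjoint by (1), the constructions over different $i$ do not interact, yielding a globally well-defined $\tau\in\mathcal{T}(\mathcal{P}(P))$ guided by $\zeta$. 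Because $P$ has only simple critical points and is strictly postcritically finite, it is not conjugate to $z\mapsto z^2$ (whose unique critical point is fixed), so Proposition~\ref{propguided} applies and gives $\zeta\in T_P\mathcal{O}(P)$. The main technical point is the identification of the cycle-closing constraint with the derivative equality $D_P\mathfrak{n}(\zeta)=D_P\mathfrak{m}(\zeta)$; the disjointness and non-criticality statements contained in hypotheses (1) and (2) are what make the inductive construction of $\tau$ unobstructed.
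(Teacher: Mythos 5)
Your proof is correct and follows essentially the same route as the paper: the explicit sum you obtain for $\tau(P^k(c_i))$ by unrolling the guided recurrence is exactly the derivative $\dot v_{k,i}=\left.\frac{d}{dt}\right|_{t=0}P_t^k(c_{i,t})$ that the paper uses to \emph{define} $\tau$ directly, the cycle-closing constraint is identified with $D_P\mathfrak{n}(\zeta)=D_P\mathfrak{m}(\zeta)$ in both arguments, and both conclude via Proposition~\ref{propguided}. The only presentational difference is that you build $\tau$ bottom-up from the recurrence and then recognize it as the orbit derivative, whereas the paper starts from the deformation $P_t$ and then checks guidedness.
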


\begin{corollary}\label{corAdam}
Under the same assumptions as in the previous theorem,  the $(d-1)$ local hypersurfaces $\{ Q, \, Q^{m_i}(c_i) = Q^{n_i}(c_i)\}_{0\le i \le d-2}$ are smooth at $P$ and transversal.
\end{corollary}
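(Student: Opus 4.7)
The plan is to derive Corollary~\ref{corAdam} as an immediate linear-algebraic consequence of Theorem~\ref{tmAdam}, without any further input from Epstein's theory. The hypersurface $H_i \pe \{Q \in V_P : (\mathfrak{m}-\mathfrak{n})_i(Q) = 0\}$ is cut out by the $i$-th component of the holomorphic map $\mathfrak{m} - \mathfrak{n}: V_P \to \C^{d-1}$, so both smoothness at $P$ and transversality of the family will reduce to properties of the differential $D_P(\mathfrak{m}-\mathfrak{n})$.

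My first step would be a dimension count. Since $\poly_d$ is a complex manifold of dimension $d+1$ and $\mathcal{O}(P)$ is a $2$-dimensional submanifold, Theorem~\ref{tmAdam} gives
\[
\dim \ker D_P(\mathfrak{m}-\mathfrak{n}) \;=\; \dim T_P\mathcal{O}(P) \;=\; 2,
\]
so the rank of $D_P(\mathfrak{m}-\mathfrak{n})$ equals $(d+1) - 2 = d-1$ and this map is surjective onto $\C^{d-1}$.

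From surjectivity I would deduce smoothness of each $H_i$ at $P$ by contradiction: if the scalar differential $d_P (\mathfrak{m}-\mathfrak{n})_i$ vanished for some $i$, the image of $D_P(\mathfrak{m}-\mathfrak{n})$ would lie in the coordinate hyperplane $\{x_i = 0\} \subset \C^{d-1}$, contradicting surjectivity. Hence each $H_i$ is a smooth hypersurface at $P$ with $T_P H_i = \ker d_P(\mathfrak{m}-\mathfrak{n})_i$. Transversality then follows from
\[
\bigcap_{i=0}^{d-2} T_P H_i \;=\; \ker D_P(\mathfrak{m}-\mathfrak{n}) \;=\; T_P\mathcal{O}(P),
\]
whose codimension in $T_P\poly_d$ equals $d-1$, i.e.\ the sum of the codimensions of the $H_i$.

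No step presents a genuine obstacle: the entire analytic content—the fact that the $(d-1)$ critical orbit relations impose independent conditions modulo the symmetry $\Aut(\C)$—is absorbed by Theorem~\ref{tmAdam}, and what remains is only the elementary translation between the kernel computation of that theorem and the geometric statements about smoothness and transversality of hypersurfaces.
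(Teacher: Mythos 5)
Your proposal is correct and follows essentially the same route as the paper: both arguments reduce the corollary to the dimension count $\operatorname{rank} D_P(\mathfrak{m}-\mathfrak{n}) = (d+1)-2 = d-1$ supplied by Theorem~\ref{tmAdam}, the paper phrasing the conclusion via a coordinate change making $\mathfrak{n}-\mathfrak{m}$ a linear projection while you carry out the equivalent linear algebra on the component differentials directly.
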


\begin{proof}[Proof of Corollary \ref{corAdam}]
The dimension of $V_P$ is equal to $d+1$, and the dimension of $T_P\mathcal{O}(P)$ is equal to $2$. By the previous theorem the map $\mathfrak{n} - \mathfrak{m} : V_P \to \C^{d-1}$
has maximal rank equal to $\dim (V_P)-2$. It follows that there exists coordinates at $P$ in $V_P$ such that $\mathfrak{n} - \mathfrak{m}$ is a linear projection map. In this coordinate system, the hypersurfaces are coordinates hyperplanes, and are thus smooth and transversal.
\end{proof}

To simplify notation, we write $\dot u\pe\displaystyle\left. \frac{du_t}{dt}\right|_{t=0}$
for any differentiable map $t \mapsto u_t$.

\begin{proof}[Proof of Theorem \ref{tmAdam}]
It is clear that $T_P\mathcal{O}(P)\subset\ker(D_P\mathfrak{n}-D_P\mathfrak{m})$, so pick  $\zeta\in\ker(D_P\mathfrak{n}-D_P\mathfrak{m})$. Proposition \ref{propguided} guarantees that it suffices to prove that there exists $\tau\in\mathcal{T}(\mathcal{P}(P))$ which is guided by $\zeta$ to conclude.
\par Since $\zeta\in T_P\poly_d$, there exists a holomorphic disc $t\mapsto P_t\in\poly_d$ with $P_0=P$ and such that $\dot P=\zeta$. To simplify notation we shall write
\begin{center}
$c_{i,t}\pe c_i(P_t)$ and $v_{n,i,t}\pe P_t^n(c_{i,t})$ for $n\geq0$.
\end{center}
We shall also  let $c_i \pe c_{i,0}$, and $v_{n,i} = v_{n,i,0}$.

\medskip

We first define a vector field $\tau\in\mathcal{T}(\mathcal{P}(P))$ and then check that it is guided by $\zeta$. To do so we pick a critical point $c_i$ of $P$ and define $\tau$ on the orbit of $c_i$ by setting:
$$\tau(v_{n,i}) \pe \dot v_{n,i}
\text{ for any } 1 \le n  < m_i
~.$$
Observe that $\tau$ is well-defined on $\mathcal{P}(P)$ since by assumption (1) all critical orbits are disjoint, and by assumption (2) $P^n(c_i) \neq P^{n'}(c_i)$ for all $ n \neq n' < m_i$.

\medskip

It remains to check that $\tau$ is guided by $\zeta$.
A first observation is that $\tau (P(c_i)) =  \zeta (c_i)$ by definition.
It is thus only necessary to check the equality 
\begin{equation}\label{eqguided}
\tau = P^*\tau + \eta_\zeta
\end{equation}
on $\mathcal{P}(P)$.

Since $\zeta\in\ker(D_P\mathfrak{n}-D_P\mathfrak{m})$ and $v_{n_i,i}=v_{m_i,i}$, we have
\begin{center}
$\dot v_{m_i,i}=D_P\mathfrak{m}\cdot \zeta=D_P\mathfrak{n}\cdot\zeta=\dot v_{n_i,i}=\tau(v_{n_i,i})=\tau(v_{m_i,i})$.
\end{center}

For $ 1\le n \le m_i$, we have
\begin{center}
$\tau(v_{n+1,i})=
\left. \frac{d}{dt}\right|_{t=0} P_t(v_{n,i,t}) = 
(\left. \frac{d}{dt}\right|_{t=0} P_t)(v_{n,i})+D_{v_{n,i}}P \cdot \dot v_{n,i} = \zeta(v_{n,i})+D_{v_{n,i}}P\cdot\tau(v_{n,i})$.
\end{center}
Since the point $v_{n,i}$ is not a critical point of $P$, applying $(D_{v_{n,i}}P)^{-1}$ gives $P^*\tau(v_{n,i})=-\eta_\zeta(v_{n,i})+\tau(v_{n,i})$ for all $n\ge1$. This concludes the proof.
\end{proof}

\begin{remark}
Recall that the periodic points contained in the critical orbits of a strictly postcritically finite polynomial are repelling. Levin \cite{Levin3} proved recently a similar transversality result for maps satisfying a weaker expansivity property along their critical orbits.
\end{remark}

\subsection{Transversality at hyperbolic parameters.}
Pick any postcritically finite hyperbolic polynomial $P$ with periodic simple critical points. Choose a neighborhood $V_P\subset \poly_d$ of $P$  and holomorphic functions $c_0, \ldots , c_{d-2}: V_P \to \C$ such that $\{ c_0(Q) , \ldots , c_{d-2}(Q) \} = \mathcal{C}(Q)$ for all $Q \in V_P$.
 For any $0\leq i\leq d-2$, choose $m_i \ge1$ such that
\begin{center}
$P^{m_i}(c_i(P))=c_i(P)$.
\end{center}
Observe that this means the critical point $c_i(P)$ to be a periodic point for $P$ of period dividing $m_i$. We then define the following holomorphic maps $\mathfrak{c},\mathfrak{m}:V_P \subset \poly_d\longrightarrow \C^{d-1}$: 
\begin{center}
$\mathfrak{c}(Q)\pe(c_0,\ldots,c_{d-2})$ and $\mathfrak{m}(Q)\pe(Q^{m_0}(c_0),\ldots,Q^{m_{d-2}}(c_{d-2}))$.
\end{center}
 One can  adapt the arguments of the previous section to prove the following.
\begin{theorem}
Let $P\in\poly_d$ be a hyperbolic postcritically finite with periodic simple critical points.
Then we have
\begin{center}
$\ker(D_P\mathfrak{c}-D_P\mathfrak{m})=T_P\mathcal{O}(P)$.
\end{center}
\label{tmAdam2}
\end{theorem}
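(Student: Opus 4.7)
The strategy is to mimic the proof of Theorem~\ref{tmAdam}, invoking Proposition~\ref{propguided} to reduce the transversality statement to the construction of a suitable guided vector field on the postcritical set.

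The inclusion $T_P\mathcal{O}(P)\subset\ker(D_P\mathfrak{c}-D_P\mathfrak{m})$ is immediate: if $Q=\phi^{-1}\circ P\circ\phi$ for $\phi\in\Aut(\C)$, with critical points labeled so that $c_i(Q)=\phi^{-1}(c_i(P))$, then $Q^{m_i}(c_i(Q))=\phi^{-1}(P^{m_i}(c_i(P)))=\phi^{-1}(c_i(P))=c_i(Q)$, so that $\mathfrak{c}-\mathfrak{m}$ vanishes identically on $\mathcal{O}(P)$. For the reverse inclusion, pick $\zeta\in\ker(D_P\mathfrak{c}-D_P\mathfrak{m})$, and choose a holomorphic disc $t\mapsto P_t$ with $P_0=P$ and $\dot P=\zeta$. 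With the notation $c_{i,t}\pe c_i(P_t)$ and $v_{n,i,t}\pe P_t^n(c_{i,t})$, the assumption gives $\dot c_i=\dot v_{m_i,i}$ for every $0\leq i\leq d-2$. Note that since the periods $m_i$ may be assumed distinct (as in the application to Theorem~\ref{maincrittm3}), the critical cycles of $P$ are pairwise disjoint, so that the postcritical set $\mathcal{P}(P)$ is the disjoint union of the cycles $\{v_{1,i},\ldots,v_{m_i,i}\}$, with $v_{m_i,i}=c_i$. We then define $\tau\in\mathcal{T}(\mathcal{P}(P))$ by $\tau(v_{n,i})\pe\dot v_{n,i}$ for $1\leq n\leq m_i$; the kernel condition is precisely what is needed to make this definition consistent at the critical points $c_i=v_{m_i,i}$.

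It remains to check that $\tau$ is guided by $\zeta$ in the sense of Definition~\ref{def:guided}. The relation $\tau\circ P=\zeta$ on $\mathcal{C}(P)$ holds by direct computation since $P'(c_i)=0$ gives $\tau(P(c_i))=\dot v_{1,i}=\zeta(c_i)+P'(c_i)\dot c_i=\zeta(c_i)$. The identity $\tau=P^*\tau+\eta_\zeta$ at the non-critical points $v_{n,i}$ with $1\leq n<m_i$ is verified exactly as in the proof of Theorem~\ref{tmAdam} by applying the chain rule to $\dot v_{n+1,i}=\zeta(v_{n,i})+D_{v_{n,i}}P\cdot\dot v_{n,i}$ and multiplying by $(D_{v_{n,i}}P)^{-1}$.

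The main new point, absent from Theorem~\ref{tmAdam}, is the verification of this identity at the critical points $c_i\in\mathcal{P}(P)$, where both $P^*\tau$ and $\eta_\zeta$ carry simple poles: I need to check that their polar parts cancel and that the resulting constant term equals $\tau(c_i)=\dot c_i$. The cancellation of polar parts follows from Lemma~\ref{lem:vf} together with $\tau(P(c_i))=\zeta(c_i)$. For the constant term, choose local coordinates at $c_i$ and expand $P(z)=P(c_i)+\tfrac12 P''(c_i)(z-c_i)^2+\cdots$ to obtain that the constant term of $P^*\tau+\eta_\zeta$ at $c_i$ equals $-\zeta'(c_i)/P''(c_i)$, where $\zeta'$ denotes the derivative along $z$. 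On the other hand, differentiating the critical-point equation $P_t'(c_{i,t})=0$ at $t=0$ yields $\zeta'(c_i)+P''(c_i)\dot c_i=0$, hence $\dot c_i=-\zeta'(c_i)/P''(c_i)$, which is exactly the required constant term. This delicate computation at the periodic critical points is the core new obstacle compared to Theorem~\ref{tmAdam}; once it is in place, Proposition~\ref{propguided} (applicable since $P$ is not conjugate to $z\mapsto z^2$, as each critical point has strictly positive period $m_i$ and $d\geq 2$ with simple critical points) yields $\zeta\in T_P\mathcal{O}(P)$ and concludes the proof.
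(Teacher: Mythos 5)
Your overall strategy coincides with the paper's: reduce to Proposition~\ref{propguided}, define $\tau$ on $\mathcal{P}(P)$ by $\tau(v_{n,i}) := \dot v_{n,i}$, and verify the guided condition with a local computation at the periodic critical points, where $P^*\tau$ and $\eta_\zeta$ both carry simple poles. Your computation of the constant term, namely $-\zeta'(c_i)/P''(c_i)=\dot c_i$ obtained by differentiating $P_t'(c_{i,t})=0$, is correct and is a legitimate variant of the paper's argument, which instead normalizes coordinates so that $c_{i,t}\equiv 0$ and the constant term is $0$. However, there are two genuine gaps.

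First, you assume that the critical cycles are pairwise disjoint (and, implicitly, that each $m_i$ is the \emph{exact} period of $c_i$), justifying this by the intended application. The theorem makes no such assumption: the $m_i$ are only required to satisfy $P^{m_i}(c_i)=c_i$, so they may be proper multiples of the exact periods, and two critical points may lie in the same cycle. This generality is actually needed downstream: Corollary~\ref{corAdam2} feeds into Theorem~\ref{cortransverse2}, which is applied to points of $\Per(\um)$, where the periods merely \emph{divide} the $m_i$ and shared cycles genuinely occur. In those cases the well-definedness of $\tau(v_{n,i}):=\dot v_{n,i}$ is not a consequence of the kernel condition alone (which only gives $\dot v_{0,i}=\dot v_{m_i,i}$); one also needs $\dot v_{n,i}=\dot v_{n+q_i,i}$ for all $n$ and $\dot v_{n,i}=\dot v_{m,j}$ whenever $v_{n,i}=v_{m,j}$ with $i\neq j$. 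This is exactly the content of the paper's Lemma~\ref{lmdirect}, whose proof iterates~\eqref{eq:iterate} around the cycle and uses crucially that $DP^{q}$ vanishes at every point of a cycle containing a critical point (this is where super-attraction enters), before combining with the kernel condition. Your proof either needs this lemma or proves a weaker statement insufficient for the application.

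Second, your reason for discarding the case $P$ conjugate to $z\mapsto z^2$ is incorrect: a fixed critical point is periodic of period $1>0$, so the map $z\mapsto z^2$ (i.e.\ $d=2$ with fixed critical point) satisfies every hypothesis of the theorem, yet Proposition~\ref{propguided} explicitly excludes it (its proof breaks down when $\card(\mathcal{P}(P))=1$). The paper treats this case separately by a direct computation showing $D_P\mathfrak{c}-D_P\mathfrak{m}$ is a nonzero linear form, so its kernel has dimension $2=\dim T_P\mathcal{O}(P)$. Without this, your argument omits a case the theorem asserts.
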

The same proof as for Corollary~\ref{corAdam} yields
\begin{corollary}\label{corAdam2}
If $P$ has simple critical points, then the $(d-1)$ local hypersurfaces $\{ Q\in\poly_d, \, Q^{m_i}(c_i) = c_i\}$, $0\le i \le d-2$ are smooth at $P$ and transversal.
\end{corollary}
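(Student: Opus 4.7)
The plan is to deduce the corollary from Theorem~\ref{tmAdam2} by a direct rank count, following verbatim the argument used to derive Corollary~\ref{corAdam} from Theorem~\ref{tmAdam}. The only inputs are the dimension of $V_P$, the dimension of the orbit $T_P\mathcal{O}(P)$, and the equality between that orbit and $\ker(D_P\mathfrak{c}-D_P\mathfrak{m})$ provided by Theorem~\ref{tmAdam2}.

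First I would record the dimensions. Since $\poly_d\simeq \C^*\times \C^d$, the open neighborhood $V_P$ has complex dimension $d+1$. The group $\Aut(\C)$ is $2$-dimensional, and because $P$ has $d-1\geq 1$ simple critical points, its stabilizer in $\Aut(\C)$ is finite, so $\dim_{\C} T_P\mathcal{O}(P)=2$. Combined with Theorem~\ref{tmAdam2} this yields
\begin{equation*}
\mathrm{rk}\, D_P(\mathfrak{c}-\mathfrak{m})=\dim V_P-\dim T_P\mathcal{O}(P)=(d+1)-2=d-1,
\end{equation*}
and since the target is $\C^{d-1}$, the map $\mathfrak{c}-\mathfrak{m}\colon V_P\to \C^{d-1}$ is a submersion at $P$.

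By the holomorphic submersion theorem (or the implicit function theorem) I then choose local holomorphic coordinates $(u_1,\dots,u_{d+1})$ on a neighborhood of $P$ in $V_P$, centered at $P$, in which $\mathfrak{c}-\mathfrak{m}$ becomes the linear projection $(u_1,\dots,u_{d+1})\mapsto (u_1,\dots,u_{d-1})$. The $i$-th hypersurface $\{Q\in \poly_d : Q^{m_i}(c_i(Q))=c_i(Q)\}$ is, by definition, cut out near $P$ by the vanishing of the $i$-th component of $\mathfrak{m}-\mathfrak{c}$, hence in these coordinates it coincides with the coordinate hyperplane $\{u_i=0\}$. These $(d-1)$ coordinate hyperplanes are obviously smooth at the origin and meet transversally, which proves the corollary.

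There is no real obstacle here: all the analytic work has already been done in Theorem~\ref{tmAdam2}, and the corollary is a purely linear-algebraic consequence. The only place where one must be a little careful is in ensuring that $\dim T_P\mathcal{O}(P)=2$, which uses the hypothesis that $P$ has simple critical points (so that the stabilizer in $\Aut(\C)$ is finite); once this point is settled the rank argument reproduces the proof of Corollary~\ref{corAdam} without any modification.
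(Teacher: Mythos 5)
Your argument is exactly the paper's: the authors prove Corollary~\ref{corAdam2} by invoking verbatim the proof of Corollary~\ref{corAdam} (dimension count $\dim V_P=d+1$, $\dim T_P\mathcal{O}(P)=2$, hence $\mathfrak{c}-\mathfrak{m}$ has maximal rank $d-1$ by Theorem~\ref{tmAdam2}, so in suitable coordinates it is a linear projection and the hypersurfaces become coordinate hyperplanes). The proposal is correct and matches the paper's route, with only the harmless extra remark justifying $\dim T_P\mathcal{O}(P)=2$.
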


\begin{proof}[Proof of Theorem \ref{tmAdam2}]
The case $d=2$ and $P(z) = z^2$ has to be treated separately. 
In this case $Q(z) = b_2z^2 + b_1 z+ b_0$ and 
$c(Q)= - \frac{b_1}{2b_2}$ hence $D_P \mathfrak{c}  (h_0, h_1, h_2)= -h_1/2$. 
On the other hand, for $Q= z^2 + b_0$ then $\mathfrak{m}(Q) = Q^n(0) = b_0 + O(b_2^0)$
hence $D_P \mathfrak{m}  (0, 0, 1)= 1$. It follows that the linear form
$D_P \mathfrak{c} -  D_P \mathfrak{m}$ is non zero. Since 
$\ker(D_P\mathfrak{c}-D_P\mathfrak{m})\supset T_P\mathcal{O}(P)$
and the latter space has dimension $2$, we conclude to the equality.

\medskip

In the remaining cases we may and shall apply Proposition~\ref{propguided}.
As in the proof of Theorem~\ref{tmAdam}, we pick $\zeta\in\ker(D_P\mathfrak{c}-D_P\mathfrak{m})$, and choose a holomorphic disc $t\mapsto P_t\in\poly_d$ with $P_0=P$ and such that $\dot P=\zeta$.
Again, we write
\begin{center}
$c_{i,t}\pe c_i(P_t)$ and $v_{n,i,t}\pe P_t^n(c_{i,t})$ for $n\geq0$.
\end{center}
We shall also  let $c_i \pe c_{i,0}$, and $v_{n,i} = v_{n,i,0}$. Recall that for any $n\ge 0$, we have the relation
\begin{eqnarray}
\dot v_{n+1,i}=
 \zeta(v_{n,i})+D_{v_{n,i}}P\cdot\dot v_{n,i}~.
\label{eq:iterate}
\end{eqnarray}
We shall deduce from this equation the following
\begin{lemma}\label{lmdirect}
~
For any $n,m \ge 0$, and for all $i,j$ such that $v_{n,i} = v_{m,j}$, we have $\dot v_{n,i} = \dot v_{m,j}$.
\end{lemma}
Taking this result for granted , we continue with the definition of a vector field on $\mathcal{P}(P)$ that is guided by $\zeta$.
Pick any point $x\in \mathcal{P}(P)$,  choose $i$ and $n\ge 1$ such that $x = v_{n,i}$, and
define $\tau(x) \pe \dot v_{n,i}$. The previous lemma shows that $\tau$ is well-defined at $x$
independently on the choice of integers $n,i$ such that  $x = v_{n,i}$.

To conclude it remains to check that $\tau$ is guided by $\zeta$. The equality $\tau (P(c_i)) = \zeta (c_i)$
follows from the definition. When $x=v_{n,i}$ is not a critical point, then  applying $D_xP^{-1}$ to~\eqref{eq:iterate} gives $\tau = P^*\tau + \eta_\zeta$ at $x$.

When $x = c_i$ is a critical point,
we need to be more careful since $D_xP =0$.
Since $x$ is a simple critical point, we may choose coordinates $z$ at $c_i$ and $w$ at $P(c_i)$ such that
$w = P_t(z) = z^2+ t (a + O(z)) + O(t^2)$. Since we may follow the critical point for $t$ small, we may also suppose that $c_{i,t} (z) = 0$ for all $t$
so that $P_t(z) = z^2 + t (a + O(z^2)) + O(t^2)$.
As in the proof of Lemma~\ref{lmprelim}, we obtain 
$\zeta (z) = ( a + O(z)) \frac{\partial}{\partial w}$, and
$\eta_\zeta(z) = ( -\frac{a}{2z} + O(z))\frac{\partial}{\partial z}$.
Observe that in our coordinates we have 
$\tau(c_i) = \left. \frac{d}{dt}\right|_{t=0} c_{i,t} = 0$, and 
$\tau(P(c_i)) = \left. \frac{d}{dt}\right|_{t=0} P_t(c_{i,t}) =a\,\frac{\partial}{\partial z}$.
We may thus extend $\tau$ locally at $c_i$ and $P(c_i)$ holomorphically 
by setting
$\tau (z) \equiv 0$ and $\tau(w) \equiv a$.
It follows that 
$$
P^*\tau (z) + \eta_\zeta (z) - \tau (z) 
= 
\frac{a}{P'(z)} \,\frac{\partial}{\partial z}+ \left( -\frac{a}{2z} + O(z)\right) \frac{\partial}{\partial z} - 0 = O(z) \frac{\partial}{\partial z}
~.
$$
From the discussion after Definition~\ref{def:guided}, it follows that 
$P^*\tau + \eta_\zeta = \tau $ at any critical point.
This concludes the proof.
\end{proof}

\begin{proof}[Proof of Lemma~\ref{lmdirect}]
Fix $i$ for a moment, and to simplify notation
write $v_k, q,m, c$ instead of $v_{k,i}, q_i, m_i, c_i$ respectively.
Recall that $q$ is the exact period of $v_0 = c$. 
For any $l \ge 1$, iterating the assertion \eqref{eq:iterate} and using the fact that $DP^q$ is vanishing at all points of the cycle and that $v_{k+q} = v_k$ for all $k\ge 0$, 
 give
\begin{eqnarray*}
\dot v_{lq} 
\!\!& = &\!\!
\zeta(v_{lq -1}) + D_{v_{lq -1}}P \cdot  \zeta(v_{lq -2}) + \ldots + D_{v_{(l-1)q +1}}\!P^{q-1} \cdot  \zeta(v_{(l-1)q}) + D_{v_{(l-1)q}}P^{q} \cdot  \dot v_{(l-1)q}
\\
\!\!& = &\!\!
\zeta(v_{lq -1}) + D_{v_{lq -1}}P \cdot  \zeta(v_{lq -2}) + \ldots + D_{v_{(l-1)q+1}}P \cdot  \zeta(v_{(l-1)q}) 
\\
\!\!& = &\!\!
\zeta(v_{q -1}) + D_{v_{q -1}}P \cdot  \zeta(v_{q -2}) + \ldots + D_{v_{1}}P \cdot  \zeta(v_0) 
= 
\dot v_{q} 
\end{eqnarray*}
Since $\zeta\in\ker(D_P\mathfrak{c}-D_P\mathfrak{m})$, we also have
$$\dot v_{0} = \dot c=D_P\mathfrak{c}\cdot \zeta=D_P\mathfrak{m}\cdot \zeta=\dot v_{m}~,$$
whence $\dot v_{lq} = \dot v_{0}$ for all $l\ge 1$ since $m$ is divisible by $q$.
Again by~\eqref{eq:iterate} we get 
$$\dot v_{lq+1} = \zeta (v_{lq}) + D_{v_{lq}} P \cdot v_{lq} = 
 \zeta (v_{0}) + D_{v_{0}} P \cdot v_{0} = 
\dot v_{1}~.$$
An immediate induction on $k\ge 0$ then proves $\dot v_{lq + k} = \dot v_{k}$ for all $l\ge 0$.
This proves the lemma in the case $i=j$.

\smallskip

Assume now that $v_{k,i}=c_j$ for some $j\neq i$ and some $k\ge1$. Observe that $q \pe q_i = q_j$.
Since $P$ has only simple critical points, we may assume that $1\le k\le q-1$, and $k$ is then uniquely determined. 
By \eqref{eq:iterate}, we get
$$\dot v_{k+1,i}= \zeta(v_{k,i})+D_{v_{k,i}}P\cdot \dot v_{k,i}=\zeta(c_j) = \dot v_{1,j}~.$$
Again by \eqref{eq:iterate} it follows by induction that 
$\dot v_{k+m,i}= \dot v_{m,j}$ for all $m\ge 1$.

Now suppose $v_{n,i} = v_{m,j}$. Permuting $i$ and $j$ if necessary we may assume that
$n = k+ m  + ql$ for some $l\ge 0$, and we have
$$\dot v_{n,i}= 
\dot v_{k+m+ql,i}=
\dot v_{k+m+q,i}=\dot v_{m+q,j} = 
 \dot v_{m,j}~,$$
which ends the proof.
\end{proof}


\section{Distribution of strictly postcritically finite parameters.}\label{sec:distrib}

The aim of this section is to prove Theorem~\ref{maintm1}. Let us first set some notation.
For $m,n\geq0$ and $0\leq i \leq d-2$, we let
\begin{eqnarray*}
\Per_i(m,n)
& \pe &
\left\{(c,a)\in\C^{d-1} , \ P_{c,a}^m(c_i)=P_{c,a}^n(c_i)\right\}~, 
\\
\Per_i^*(m,n)
& \pe &
\left\{(c,a)\in\C^{d-1} ,\  P^k(c_i) \text{ is periodic iff } k\ge n, \text{ with period exactly } m-n\right\}, 
\end{eqnarray*}
Obviously $\Per_i^*(m,n)\subset \Per_i(m,n)$.
Given any $(d-1)$-tuples $\um = (m_0, \ldots , m_{d-2}), \un = (n_0, \ldots, n_{d-2})$ of non-negative integers, we also set
\begin{center}
$\displaystyle\PCF (\um,\un)\pe \bigcap_{i=0}^{d-2} \Per_i(m_i,n_i) \text{ and } 
\displaystyle\PCF^* (\um,\un)\pe \bigcap_{i=0}^{d-2} \Per_i^*(m_i,n_i) 
~.$
\end{center}
Observe that any polynomial in $\PCF (\um,\un)$ is post-critically finite. 
We  define
$$
\TPCF (\um,\un) := \{ ( a,c)\in \PCF (\um,\un),\,  \Per_i(m_i,n_i) \text{ are smooth and transverse at } (c,a)\}~.
$$
We finally write $$|\um| = m_0+ \ldots + m_{d-2}.$$


\subsection{Transversality in the family $P_{c,a}$.}

The next result is crucial to our analysis.  It is a direct consequence of Theorem~\ref{tmAdam}.

\begin{theorem}\label{cortransverse}
Let $\um, \un$ be two $(d-1)$-tuples of integers such that $m_j>n_j>0$ and let $(c,a)\in\PCF(\um,\un)$ be such that $P_{c,a}$ has only simple critical points. Suppose 
\begin{enumerate}
\item
the orbits of the critical points are disjoint: for any two critical points $c_i\neq c_j$ then 
$P^k(c_i) \neq P^l(c_j)$ for all $k,l \ge 0$;
\item
for each $i$, $P^{n}(c_i)$ is periodic iff $n\ge n_i$ and its period is then exactly equal to $m_i-n_i$.
\end{enumerate}
Then the $(d-1)$ hypersurfaces $\Per_j(m_j,n_j)$ are smooth and intersect transversely at the point $(c,a)$.
\end{theorem}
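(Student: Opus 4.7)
The plan is to deduce this statement from its analogue in the $(d+1)$-dimensional family $\poly_d$ of all degree $d$ polynomials, namely Corollary~\ref{corAdam}. All the analytic content has been carried out in Section~\ref{sec:trans}; the remaining task is purely geometric: to transfer the conclusion from $\poly_d$ down to the orbifold parameterisation $\C^{d-1}$ via the map $\Psi:\C^{d-1}\to\poly_d$, $(c,a)\mapsto P_{c,a}$.

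To set things up, I would choose a neighbourhood $V_P\subset\poly_d$ of $P:=P_{c,a}$ equipped with a holomorphic labelling $c_0(Q),\ldots,c_{d-2}(Q)$ of the critical points of $Q\in V_P$ that is compatible with the labelling on $\C^{d-1}$, and consider the local hypersurfaces
\[
\mathcal{H}_j \;:=\; \bigl\{Q\in V_P\,:\, Q^{m_j}(c_j(Q)) = Q^{n_j}(c_j(Q))\bigr\},\qquad 0\le j\le d-2.
\]
Then $\Per_j(m_j,n_j)$ coincides with $\Psi^{-1}(\mathcal{H}_j)$ near $(c,a)$, and Corollary~\ref{corAdam} applies under the stated hypotheses, yielding the smoothness of each $\mathcal{H}_j$ at $P$ together with the equality $\bigcap_{j} T_P\mathcal{H}_j = T_P\mathcal{O}(P)$.

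The key geometric input, and the only truly delicate step, is to establish that $\Psi$ is a transverse slice to the affine-conjugation orbit at $(c,a)$: the differential $D_{(c,a)}\Psi$ is injective and
\[
\mathrm{Im}\,D_{(c,a)}\Psi \;\cap\; T_P\mathcal{O}(P) \;=\; \{0\}.
\]
Since $T_{(c,a)}\C^{d-1}$ has dimension $d-1$, this exhibits $\mathrm{Im}\,D_{(c,a)}\Psi$ as a linear complement to $T_P\mathcal{O}(P)$ inside $T_P\poly_d$, or equivalently states that the induced map $\pi:\C^{d-1}\to\mathcal{P}_d$ is \'etale at $(c,a)$ in the orbifold sense. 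Under the hypothesis that $P$ has simple critical points this can be verified directly from the explicit formula~\eqref{eq:defpoly}, which normalises the leading coefficient to $1/d$ and fixes the critical point $c_0=0$: any infinitesimal affine conjugation of $P_{c,a}$ that remains in the slice $\Psi(\C^{d-1})$ is forced to be trivial.

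Granting the slice property, the conclusion is a formal dimension count:
\[
\bigcap_j T_{(c,a)}\Per_j(m_j,n_j) \;=\; (D_{(c,a)}\Psi)^{-1}\!\Bigl(\bigcap_j T_P\mathcal{H}_j\Bigr) \;=\; (D_{(c,a)}\Psi)^{-1}T_P\mathcal{O}(P) \;=\; \{0\},
\]
which is precisely the required transversality of the $(d-1)$ hypersurfaces $\Per_j(m_j,n_j)$ in $\C^{d-1}$ at $(c,a)$. Smoothness of each individual $\Per_j(m_j,n_j)$ is then automatic: if $\mathrm{Im}\,D_{(c,a)}\Psi$ were contained in some $T_P\mathcal{H}_j$, adding to it the transverse subspace $T_P\mathcal{O}(P)$ would produce a subspace of dimension $d+1$ inside the $d$-dimensional space $T_P\mathcal{H}_j$, a contradiction. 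I expect no additional obstacle beyond the verification of the slice property described above.
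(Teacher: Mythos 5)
Your proposal follows the paper's proof essentially verbatim: the paper likewise deduces the theorem from Corollary~\ref{corAdam} together with the transversality of the slice $\Lambda=\{P_{c,a}\}\subset\poly_d$ to the orbit $\mathcal{O}(P_{c,a})$ at polynomials with simple critical points (its Proposition~\ref{proptransverse}), and concludes by the same linear-algebra count. The only caveat is that injectivity of $D_{(c,a)}\Psi$ in the $a$-direction requires $a\neq 0$, which does not follow from simplicity of the critical points (the constant term of $P_{c,a}$ is $a^d$) but from hypothesis (2), since $a=0$ would make $c_0=0$ a fixed critical point; this is a one-line fix that the paper's own write-up also leaves implicit.
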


We rely on 
\begin{proposition}
The set $\Lambda\pe\{P_{c,a}\in\poly_d  /  (c,a)\in\C^{d-1}\}$ is smooth subvariety of $\poly_d$ of dimension $d-1$. Moreover, if $P_{c,a}$ has simple critical points, then $\Lambda$ and $\mathcal{O}(P_{c,a})$ intersect transversely at $P_{c,a}$.
\label{proptransverse}
\end{proposition}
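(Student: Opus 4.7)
The plan is to realize $\Lambda$ explicitly as a linear subvariety of $\poly_d$ and then to carry out a direct tangent-space computation at $P = P_{c,a}$.

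\emph{Structure of $\Lambda$.} First I would show that $P_{c,a}$ has leading coefficient $1/d$ and vanishing coefficient of $z$: the first is immediate from~\eqref{eq:defpoly}, while the second follows because $P'_{c,a}(z) = \prod_{i=0}^{d-2}(z - c_i)$ has $c_0 = 0$ as a root, so its constant term vanishes. Conversely, any polynomial $P \in \poly_d$ satisfying these two conditions can be written as some $P_{c,a}$: one recovers $(c_1, \ldots, c_{d-2})$ from the nonzero roots of $P'$ via their elementary symmetric functions, and takes any $d$-th root of $P(0)$ for $a$. Hence $\Lambda$ is precisely the codimension-$2$ affine subspace of $\poly_d \cong \C^{d+1}$ cut out by $\{b_d = 1/d,\ b_1 = 0\}$, which is smooth of dimension $d-1$.

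\emph{Tangent spaces.} Identifying $T_P \poly_d$ with polynomials of degree at most $d$, the previous step gives that $T_P \Lambda$ consists of those $\zeta$ whose coefficients of $z^d$ and of $z$ both vanish. For the orbit $\mathcal{O}(P)$, an infinitesimal affine vector field $\xi(z) = \alpha z + \beta$ produces the tangent vector
\[ \zeta(z) = P'(z)\,\xi(z) - \xi(P(z)) = \alpha\bigl(zP'(z) - P(z)\bigr) + \beta\bigl(P'(z) - 1\bigr). \]
Since $zP'(z) - P(z)$ has degree $d$ with leading coefficient $(d-1)/d$, while $P'(z) - 1$ has degree $d-1$, the two are linearly independent, so $T_P \mathcal{O}(P)$ is the $2$-dimensional span of $\zeta_\alpha(z) := zP'(z) - P(z)$ and $\zeta_\beta(z) := P'(z) - 1$.

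\emph{Transversality.} This is the substantive step. Suppose $\alpha \zeta_\alpha + \beta \zeta_\beta$ lies in $T_P \Lambda$. The $z^d$-coefficient yields $\alpha(d-1)/d = 0$, hence $\alpha = 0$. The remaining constraint, vanishing of the $z$-coefficient of $\beta(P' - 1)$, reduces to $\beta \cdot (-1)^{d-2} c_1 c_2 \cdots c_{d-2} = 0$, since the $z$-coefficient of $P'(z) = z \prod_{i=1}^{d-2}(z - c_i)$ equals the constant term of the remaining product. The simple-critical-points hypothesis is precisely what ensures the $c_i$'s are pairwise distinct, so $c_1, \ldots, c_{d-2}$ are all nonzero, forcing $\beta = 0$. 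Hence $T_P \Lambda \cap T_P \mathcal{O}(P) = \{0\}$, and the dimension count $(d-1) + 2 = d+1 = \dim \poly_d$ gives transversality. No step is particularly delicate; the main point to get right is how the simple-critical-points hypothesis enters through the non-vanishing of $c_1 \cdots c_{d-2}$.
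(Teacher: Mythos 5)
Your proof is correct and follows essentially the same route as the paper's: both realize $\Lambda$ as the affine slice $\{b_d = 1/d,\ b_1 = 0\}$, identify $T_P\mathcal{O}(P)$ with the span of $zP'-P$ and $P'-1$, and conclude by comparing the $z^d$- and $z$-coefficients. Your observation that the relevant coefficient equals $(-1)^{d-2}c_1\cdots c_{d-2}$ is the same as the paper's condition $b_2\neq 0$ (both express $P''(0)\neq 0$), so the way the simple-critical-points hypothesis enters is identical.
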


\begin{proof}[Proof of Theorem~\ref{cortransverse}]
The theorem follows from Corollary~\ref{corAdam} and the following general fact.
Suppose we have $k$ coordinate hyperplanes  $H_1, \ldots, H_k$ in $\C^n$, and
pick any smooth subvariety $\Lambda$ that is transversal to the intersection $H_1\cap \ldots \cap H_k$.
Then the hyperplanes $H_i \cap \Lambda$ are smooth and have transversal intersections in $\Lambda$.
\end{proof}

\begin{proof}[Proof of Proposition~\ref{proptransverse}]
Let us parameterize $\poly_d$ by $P(z) = \sum b_i z^i$ with $(b_0, \ldots , b_d) \in \C^{d}\times \C^*$. Then the space $\Lambda\pe \{P_{c,a}\in\poly_d , \  (c,a)\in\C^{d-1}\}$
is determined by the equations $\{ b_d = \frac1d, \, b_1 = 0 \}$ and is thus clearly smooth.

The space $T_P \O(P)$ is two-dimensional and generated by
$$
\zeta_1 = \left. \frac{d}{dt}\right|_{t=0} P( z + t) - t = \sum_1^{d} i b_i z^{i-1} - 1
$$
and
$$
\zeta_2= \left. \frac{d}{dt}\right|_{t=1}  \frac1t\, P(t z)  = \sum_2^d (i-1) b_i z^i~.
$$
Suppose $a_1 \zeta_1 + a_2 \zeta_2 \in T_P\Lambda$ for some $a_1, a_2 \in \C$.
Since $T_P\Lambda = \{ \sum_0^d \beta_i z^i, \, \beta_d = \beta_1 =0\}$, we have
that $(d-1) a_2b_d = 0$, and $a_1 (2b_2) =0$. The degree of $P$ is equal to $d$, hence $b_d \neq 0$ and $a_2 =0$. Recall that $P'(0) = 0$ when $P\in \Lambda$.
When $P$ has only simple critical points,  then $b_2 \neq 0$ and
$a_1 =0$. This proves $T_P \O(P) \cap T_P\Lambda = (0)$.\end{proof}


\subsection{Bounds on postcritically finite polynomials in a fixed subvariety.}

\par 

\begin{theorem}\label{thmupperbound}
Pick any two sequences of $(d-1)$-tuples $(\um_k,\un_k)$ of non-negative integers such that 
$m_{k,i}> n_{k,i}$ and $m_{k,i}\rightarrow\infty$ for all $0\le i\le d-2$. For any proper algebraic subvariety $V\subset\A^{d-1}_\C$, we have
\begin{center}
$ \card \left( V\cap \TPCF(\um_k,\un_k) \right) = o (d^{|\um_k|})~.$
\end{center}
\end{theorem}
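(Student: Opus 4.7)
The plan is to induct on $s := \dim V$, exploiting the transversality built into the definition of $\TPCF(\um,\un)$: at each of its points, the $d-1$ hypersurfaces $\Per_i(m_i,n_i)$ are smooth and meet transversely in $\A^{d-1}_\C$. First I would reduce to the case where $V$ is irreducible of dimension $s\le d-2$, dispatch the trivial case $s=0$, and for $s\ge 1$ split
$$V \cap \TPCF(\um_k,\un_k) \;=\; \bigl(V_\reg \cap \TPCF(\um_k,\un_k)\bigr) \;\sqcup\; \bigl(V_\sing \cap \TPCF(\um_k,\un_k)\bigr).$$
The inductive hypothesis applied to the proper subvariety $V_\sing$ (of dimension $<s$) controls the second piece, so I only need to bound $\card(V_\reg \cap \TPCF(\um_k,\un_k))$.

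Fix $(c,a)\in V_\reg \cap \TPCF(\um_k,\un_k)$, put $W:=T_{(c,a)}V$ (an $s$-dimensional subspace of $T_{(c,a)}\A^{d-1}_\C\cong\C^{d-1}$), and let $\ell_i$ be a linear form cutting out $H_i := T_{(c,a)}\Per_i(m_{k,i},n_{k,i})$. Transversality at $(c,a)$ gives $\bigcap_i H_i = \{0\}$, i.e.\ the $\ell_i$ form a basis of $(\C^{d-1})^*$. Consequently the restrictions $\ell_i|_W$ span $W^*$, so some subset $I=I(c,a)\subset\{0,\dots,d-2\}$ with $|I|=s$ yields a basis of $W^*$. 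Equivalently, $W\cap \bigcap_{i\in I}H_i = \{0\}$, so $V$ meets $\bigcap_{i\in I}\Per_i(m_{k,i},n_{k,i})$ transversely at $(c,a)$; in particular $(c,a)$ is a reduced isolated point of this intersection.

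Partitioning $V_\reg\cap\TPCF(\um_k,\un_k)$ by the choice of $I(c,a)$ (broken lexicographically),
$$V_\reg \cap \TPCF(\um_k,\un_k) \;\subseteq\; \bigcup_{|I|=s}\Bigl\{\text{isolated points of } V \cap \bigcap_{i\in I}\Per_i(m_{k,i},n_{k,i})\Bigr\}.$$
Since $P^n_{c,a}(c_i)$ is homogeneous in $(c,a)$ of degree $d^n$, we have $\deg \Per_i(m_{k,i},n_{k,i}) \le d^{m_{k,i}}$. The refined Bezout theorem (Fulton, \emph{Intersection Theory}, Thm.~12.3) applied in $\p^{d-1}_\C$ to the closures then bounds each term in the union by $\deg(V)\cdot d^{\sum_{i\in I} m_{k,i}}$. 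For every such $I$, the complement $I^c$ has size $d-1-s\ge 1$, so $\sum_{i\notin I}m_{k,i}\to\infty$ and $d^{\sum_{i\in I}m_{k,i}}/d^{|\um_k|}\to 0$. Summing over the $\binom{d-1}{s}$ subsets $I$ closes the induction.

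The main obstacle I anticipate is that the global intersection $V\cap \bigcap_{i\in I}\Per_i(m_{k,i},n_{k,i})$ can perfectly well fail to be zero-dimensional---for instance if $V$ happens to be contained in some $\Per_{i_0}(m_{k,i_0},n_{k,i_0})$, a phenomenon at the heart of Baker--DeMarco's work. This is precisely why classical Bezout does not apply directly and one must use the refined version, which bounds only the contribution of isolated components of a possibly non-equidimensional intersection. The local transversality coming from the definition of $\TPCF$ is what ensures that the points of $V_\reg\cap\TPCF(\um_k,\un_k)$ lie in the isolated locus with intersection multiplicity one, so the refined Bezout bound is exactly what is needed to conclude.
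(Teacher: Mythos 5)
Your proposal is correct and follows essentially the same route as the paper: at each smooth point of $V$ lying in $\TPCF(\um_k,\un_k)$ you use the transversality of the $d-1$ tangent hyperplanes to extract a subset $I$ with $|I|=\dim V$ making the point isolated in $V\cap\bigcap_{i\in I}\Per_i(m_{k,i},n_{k,i})$, then bound the isolated points by (refined) Bezout using $\deg\Per_i(m,n)=d^m$ and win because $I^c\neq\emptyset$; the paper handles the singular locus by the stratification $V=\reg(V)\cup\reg(\sing V)\cup\cdots$, which is just your induction on $\dim V$ in different packaging. Your explicit appeal to the refined Bezout theorem for isolated components makes precise what the paper does implicitly when it discards the positive-dimensional components $Z_{I,k}$.
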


\begin{proof}[Proof of Theorem~\ref{thmupperbound}]
We shall rely on the next two lemmas.
\begin{lemma}\label{lmtransversal}
Suppose $V$ is any irreducible algebraic subvariety of dimension $q$ in $\A^{d-1}_\C$, and let $p$ be a smooth point in $V$. 
 Pick a finite set of hyperplanes $H_1, \ldots , H_{d-1}$ that intersect transversely at $p$ in $\A^{d-1}_\C$.

Then there exists a finite set $I\subset \{ 1, \ldots, q-1\}$ of cardinality $q$ such that 
$p$ is an isolated point of $V\cap_{i \in I}  H_i$.
\end{lemma}

\begin{lemma}\label{lmdegree}
For any $0\le i \le d-2$ and any integer $m> n\ge0$ we have
$$
\deg ( P^m_{c,a}(c_i) - P^n_{c,a}(c_i))= d^m~.
$$
\end{lemma}
For any multi-index $|I| = q$, we decompose into two subsets
$$
V \cap \TPCF_I (\um_k , \un_k) = F_{I,k} \cup Z_{I,k}
$$
where $F_{I,k}$ consists of those isolated points of $V\cap \TPCF_I(\um_k,\un_k)$, and $Z_{I,k}$ is the union of all positive dimensional components of $V\cap \TPCF_I(\um_k,\un_k)$.
Observe that by Bezout's Theorem and Lemma~\ref{lmdegree}, we have
\begin{equation}\label{eqbdbezout}
\card(F_{I,k}) \le \deg (V) \, \prod_{i\in I} \deg (\Per_i(m_{k,i},n_{k,i}) ) = \deg(V) \, d^{\sum_{i\in I} m_{k,i}}~.
\end{equation}
Let $\reg(V)$ be the regular locus of $V$. It is an open Zariski dense subset of $V$.
By Lemma~\ref{lmtransversal}, for any point $p\in \TPCF (\um_k,\un_k)\cap \reg(V)$ one can find a multi-index $|I| = q$ such that $p\in F_{I,k}$. It follows that
$$
\card \left( \reg(V) \cap \TPCF(\um_k,\un_k) \right)
\le 
\sum_{|I| =q} \card F_{I,k}
\le 
\sum_{|I| =q} \deg(V) \, d^{\sum_{i\in I} m_{k,i}}~.
$$
Since $V$ is a proper subvariety, $q< d-1$ whence we can find a constant $C>0$ only depending on $V$, $q$ and $d$ such that 
$$
\card \left( \reg(V) \cap \TPCF(\um_k,\un_k) \right)
\le C\,  d^{|\um_k| - \min_{0\le i \le d-2} m_{k,i}}~.
$$
The latter bound is $o(d^{|\um_k|})$ since $m_{k,i} \to \infty$ for all $i$.

The result follows by stratifying $V = \reg (V) \cup \reg ( \sing (V)) \cup \reg ( \sing ( \sing (V)))\cup \ldots$, and by applying the preceding bound to each strata. 
\end{proof}

\begin{proof}[Proof of Lemma~\ref{lmtransversal}]
In suitable coordinates $z_i$ at $p$, we may suppose that $H_i = \{ z_i = 0\}$ for each $i$.
Since $V$ is smooth, its tangent space at $p$ has dimension $q$ and one of the $q$-forms
$\om_I:= dz^I$ for $|I| =q$ satisfies $\om_I|_{T_pV} \neq 0$. Since the kernel of $\om_I$ is the tangent space of $\cap_I H_i$ at $p$, we conclude that the intersection between $V$ and the $H_i$'s with $i\in I$ is transversal.
\end{proof}

\begin{proof}[Proof of Lemma~\ref{lmdegree}]
An immediate induction shows that 
$$ P^{l+1}_{c,a} (c_i) = \frac1{d^{1+\ldots + d^{l-1}}}\, P_{c,a}(c_i) ^{d^l}  + Q (c,a) ~,$$
where $Q$ is a polynomial of degree $< d^{l+1}$.
\end{proof}


\subsection{Lower bound for the number of postcritically finite polynomials.}

We shall prove the following result.
\begin{theorem}\label{thm:countSPCF}
There exists a positive constant $c>0$ depending only on $d$ such that, if $(\um_k,\un_k)$ are any two sequences of $(d-1)$-tuples of non-negative integers such that 
$m_{k,i}> n_{k,i}\ge 1$ and $m_{k,i}\rightarrow\infty$ for all $0\le i\le d-2$, there exists $k_0\ge1$ such that
\begin{center}
$\displaystyle \card \left(\PCF^*(\um_k,\un_k)\right) \ge 
c\, d^{|\um_k|}$
\end{center}
for all $k\ge k_0$.
\end{theorem}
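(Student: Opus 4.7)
My plan combines a Bézout-type count on dynatomic divisors with the transversality result of Theorem~\ref{cortransverse}. First, by Lemma~\ref{lmdegree} each hypersurface $\Per_i(m_{k,i},n_{k,i})$ has degree $d^{m_{k,i}}$ in $\p^{d-1}_\C$. Since $\PCF(\um_k,\un_k)\subset\mathcal{C}_d$ is bounded, it is zero-dimensional, and the leading homogeneous parts of the defining polynomials (proportional to $P_{c,a}(c_i)^{d^{m_{k,i}-1}}$) share no common zero at infinity by Lemma~\ref{lminfinity}. Bézout's theorem thus gives $\sum_p \mathrm{mult}_p(\PCF(\um_k,\un_k)) = d^{|\um_k|}$.

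I next introduce the dynatomic factorization. Using $P_{c,a}(z)-P_{c,a}(w)=(z-w)Q(z,w,c,a)$ one obtains $f_{m,n}(c,a):=P^m_{c,a}(c_i)-P^n_{c,a}(c_i)=f_{m-1,n-1}\cdot R_i$ with $\deg R_i=(d-1)d^{m-1}$, and iterating together with Möbius inversion yields a factorization $f_{m_i,n_i}=\prod_{n'\le n_i,\,p\mid(m_i-n_i)}\phi^*_{n',p}$, where $\phi^*_{n',p}$ has support $\Per^*_i(n'+p,n')$ and degree $D(n',p)=(d-1)d^{n'-1}J(p)$ for $n'\ge 1$, with $J(p):=\sum_{p'\mid p}\mu(p/p')d^{p'}$. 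A direct check for $p\le 5$ together with the crude bound $J(p)\ge d^p-\tau(p)d^{p/2}$ for $p\ge 6$ yields $J(p)\ge(d-1)d^{p-1}$ uniformly in $p\ge 1$, $d\ge 2$, whence $D(n_{k,i},m_{k,i}-n_{k,i})\ge c_d\, d^{m_{k,i}}$ with $c_d:=(d-1)^2/d^2>0$. Since each $\phi^*_i:=\phi^*_{n_{k,i},m_{k,i}-n_{k,i}}$ is a subdivisor of $\Per_i(m_{k,i},n_{k,i})$, the intersection at infinity of its closures in $\p^{d-1}_\C$ is contained in that of the $\Per_i$, hence empty. Bézout applied to $\Phi^*:=\bigcap_i\phi^*_i$ (whose support is $\PCF^*(\um_k,\un_k)$) therefore gives
$$\sum_p \mathrm{mult}_p(\Phi^*)\,=\,\prod_i D(n_{k,i},m_{k,i}-n_{k,i})\,\ge\, c_d^{\,d-1}\,d^{|\um_k|}.$$

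To extract cardinality from multiplicity, I invoke Corollary~\ref{corAdam}: at any $p\in\PCF^*(\um_k,\un_k)$ where $P_{c,a}$ has simple critical points with pairwise disjoint critical orbits, the hypersurfaces $\Per_i(m_{k,i},n_{k,i})$ meet transversally and are smooth at $p$. Locally at such a $p$ the factor $\phi^*_i$ coincides with $\Per_i$ (the other dynatomic pieces avoid $p$), so $\mathrm{mult}_p(\Phi^*)=1$. The non-transverse points of $\PCF^*$ are contained in a proper subvariety $V\subset\A^{d-1}_\C$ consisting of the discriminant hypersurface $\{\Delta(P'_{c,a})=0\}$ together with the finitely many orbit-coincidence hypersurfaces $\{P^k_{c,a}(c_i)=P^l_{c,a}(c_j)\}$ for $i\ne j$ and $0\le k,l\le \max_i m_{k,i}$. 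Replacing $\phi^*_{i_0}$ (for $i_0$ maximizing $m_{k,i_0}$) by a component of $V$ and applying the Bézout estimate from the proof of Theorem~\ref{thmupperbound} bounds $\sum_{p\in V\cap\Phi^*}\mathrm{mult}_p(\Phi^*)=O(d^{|\um_k|-m_{k,i_0}})=o(d^{|\um_k|})$. Combining everything yields $\card(\PCF^*(\um_k,\un_k))\ge c_d^{d-1}d^{|\um_k|}-o(d^{|\um_k|})\ge \tfrac12 c_d^{d-1}d^{|\um_k|}$ for $k$ large, establishing the theorem with $c=c_d^{d-1}/2$ depending only on $d$.

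The main obstacle I anticipate is the clean formulation of the dynatomic factorization: verifying that the factors $\phi^*_{n',p}$ exist as bona-fide polynomial divisors with the stated degrees and supports, and that the local identification $\phi^*_i=\Per_i$ at transverse points of $\PCF^*$ is rigorous despite potential non-reducedness phenomena in degenerate cases. A secondary difficulty is ensuring the $o(d^{|\um_k|})$ bound on $V\cap\Phi^*$ survives the fact that the number of relevant orbit-coincidence components in $V$ grows with $\max_i m_{k,i}$; this requires a uniform Bézout bound rather than the raw statement of Theorem~\ref{thmupperbound}.
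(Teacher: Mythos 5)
Your route is genuinely different from the paper's: the paper does not use Bézout at all for the lower bound. It invokes the bijection of \cite[Theorem~7.18]{favredujardin} (resting on Bielefeld--Fisher--Hubbard and Kiwi) between the simple-critical-point part of $\PCF^*(\um_k,\un_k)$ and a set of critical portraits, i.e.\ unlinked tuples of angles in $\R/\Z$, and then counts angles directly for the doubling-type map $\Phi_d(\a)=d\a$ (Lemma~\ref{lmcarpmn}). That combinatorial count works under the bare hypothesis $m_{k,i}\to\infty$ and avoids every multiplicity and support issue. Your algebraic approach, as written, has two genuine gaps.

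First, the dynatomic factorization is the crux, not a verification to be postponed. You need the factors $\phi^*_{n',p}$ to be effective divisors whose support is \emph{exactly} $\Per^*_i(n'+p,n')$; otherwise the Bézout number of $\Phi^*$ counts points outside $\PCF^*(\um_k,\un_k)$ and gives no lower bound on $\card(\PCF^*)$. The paper's own Theorem~\ref{tmdefPern}(2) shows the prototype of what goes wrong: exact-period divisors acquire parasitic components at parameters of smaller period with root-of-unity multiplier, and the analogous phenomenon for preperiod (collapse of the tail, or landing on a cycle of period strictly dividing $m_i-n_i$) means $\supp(\phi^*_{n',p})\supsetneq\Per^*_i(n'+p,n')$ is a real possibility. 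Known results of this type (Hutz--Towsley for $z^d+c$) are themselves nontrivial and one-dimensional; nothing in the paper supplies the $(d-1)$-parameter version, and establishing it is comparable in difficulty to the theorem you are proving.

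Second, the excision of the non-transverse locus does not close. The orbit-coincidence locus consists of roughly $(\max_i m_{k,i})^2$ hypersurfaces $\{P^k_{c,a}(c_i)=P^l_{c,a}(c_j)\}$ of degree up to $d^{\max_i m_{k,i}}=d^{m_{k,i_0}}$; substituting one such component for $\phi^*_{i_0}$ and applying Bézout gives $d^{m_{k,i_0}}\cdot d^{|\um_k|-m_{k,i_0}}=d^{|\um_k|}$ per component, hence $O\bigl((\max_i m_{k,i})^2 d^{|\um_k|}\bigr)$ in total --- not $o(d^{|\um_k|})$. The paper's bound on critical orbit relations (Theorem~\ref{thmcriticalrelations}) is obtained by the portrait combinatorics, not by Bézout, and it requires $m_{k,i}-n_{k,i}\to\infty$, a hypothesis \emph{absent} from Theorem~\ref{thm:countSPCF}; so even importing that result would not prove the statement as given. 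There is also a smaller unaddressed point: bounding $\sum_{p\in V\cap\Phi^*}\mathrm{mult}_p(\Phi^*)$ is not the same as bounding the Bézout number of the modified intersection in which $\phi^*_{i_0}$ is replaced by a component of $V$, since the local multiplicity of the original complete intersection at a bad point is not controlled by that of the modified one.
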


We rely on a deep result of J.~Kiwi~\cite{kiwi-portrait,BFH}, and follow the exposition of~\cite{favredujardin}. Let us first introduce some notation.
\begin{definition}
  We denote by $\mathsf{S}$ the set of pairs $\{ \alpha , \alpha ' \}$
  contained in the circle $\mathbb{R}/\mathbb{Z} $, such that $d\alpha
  = d \alpha '$ and $\alpha \neq \alpha '$.
\end{definition}
Two finite and disjoint subsets $\theta_1, \theta_2 \subset
\mathbb{R}/\mathbb{Z} $ are said to be \emph{unlinked} if $\theta_2$
is included in a single connected component of $(\mathbb{R}/\mathbb{Z})
\setminus \theta_1$.
\begin{definition}
 We let $\mathsf{Cb}_0$ be the set of $(d-1)$-tuples $\Theta =
  (\theta_1,\cdots , \theta_{d-1})\in \mathsf{S}^{d-1}$ such that for
  all $i\neq j$, the two pairs $\theta_i$ and $\theta_j$ are disjoint
  and unlinked.
\end{definition}

\begin{proof}[Proof of Theorem~\ref{thm:countSPCF}]
Write $\Phi_d(\a) = d \a$ on $\R/\Z$.
By~\cite[Theorem 7.18]{favredujardin}, the subset of $\PCF^*(\um_k,\un_k)$ having only simple critical points is in bijection with the following set
\begin{multline*}
\mathsf{C}_k
:= \{  \Theta \in \mathsf{Cb}_0 ,\, \theta_i = (\a_i, \a'_i),\, \text{ s.t. for all } 1\le i \le d-1, \, 
d^{n} \a_i \text{ is } \Phi_d-\text{periodic iff } n \ge n_{k,i}, \\
\text{ and its period is  precisely equal to  } m_{k,i}- n_{k,i} 
\}~.
\end{multline*}
We thus need to find a positive constant $c>0$ such that
$$
\card (\mathsf{C}_k) \ge c\, d^{|\um_k|}
$$
for all $k\ge k_0$.
The precise count of $\mathsf{C}_k$ is a very delicate issue, but obtaining a lower bound is
much easier.

\begin{lemma}\label{lmcarpmn}
For any $m>n > 0$, define
$$P(m,n) \pe \{ \a \in \R/ \Z, \, d^{n'} \a\text{ is } \Phi_d-\text{periodic iff } n' \ge n, \\
\text{ with period  equal to  } m- n\} ~.$$
There exists a constant $c>0$ such that for  all $x \in \R /\Z$
\begin{equation}\label{eq:victory}
\card \left( P(m,n) \cap \left[x, x + \frac1{d^2}\right] \right)\ge  c\, d^m
\end{equation}
for all  $m > n >0$.
\end{lemma}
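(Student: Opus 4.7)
Plan: The key idea is that the map $\Phi_d^2$ restricts to a bijection between any arc of length $1/d^2$ and $\R/\Z$, which allows reducing counting at this fine scale to global counting on $\R/\Z$.

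Denote by $N(k)$ the number of points of exact $\Phi_d$-period $k$ in $\R/\Z$. First I would establish the classical lower bound $N(k) \ge c_0\, d^k$ for every $k \ge 1$, with $c_0 = c_0(d)>0$. By M\"obius inversion $N(k) = \sum_{\ell \mid k} \mu(k/\ell)(d^{\ell} - 1)$, so a crude triangle estimate gives $N(k) \ge d^k - 2d^{k/2}$, which combined with a direct check for $k \in \{1,2,3\}$ yields the bound.

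For the main case $n \ge 3$, I would first verify that, for any $\alpha \in P(m,n)$, one has $\Phi_d^2(\alpha) \in P(m-2, n-2)$: the period is preserved and the preperiod drops by exactly $2$. Conversely, when $n \ge 3$, any $\beta \in P(m-2, n-2)$ has preperiod $n-2 \ge 1$, hence is \emph{not} periodic; consequently no preimage of $\beta$ under $\Phi_d$ or $\Phi_d^2$ is periodic, and each of the $d^2$ preimages of $\beta$ under $\Phi_d^2$ has preperiod exactly $n$ and therefore lies in $P(m,n)$. Combined with the bijection $\Phi_d^2|_I : I \to \R/\Z$ (where $I = [x, x+1/d^2]$), exactly one preimage of each $\beta$ falls in $I$, giving
\[
|P(m,n) \cap I| \;=\; |P(m-2, n-2)| \;=\; (d-1)\, d^{\,n-3}\, N(m-n) \;\ge\; c_0(d-1)\, d^{\,m-3}.
\]
This proves \eqref{eq:victory} with $c = c_0(d-1)/d^3$ in this case.

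The remaining cases $n \in \{1,2\}$ require a separate treatment because $\Phi_d^2(\alpha)$ is now periodic and the periodic preimages of the target must be excluded. For $n=1$ I would use the bijection $\Phi_d|_I : I \to J := \Phi_d(I)$ onto an arc of length $1/d$: then $\alpha \in P(m,1) \cap I$ corresponds to an exact-period-$(m-1)$ point $y = \Phi_d(\alpha) \in J$ whose unique preimage in $I$ is not the periodic preimage $y_{-1}$ in the cycle of $y$. By equidistribution of exact-period points (spaced at scale $\sim 1/d^{m-1}$), one has $|N(m-1) \cap J| = N(m-1)/d + O(1)$ and $|\{y \in N(m-1) : y_{-1} \in I\}| = N(m-1)/d^2 + O(1)$; their difference yields $|P(m,1) \cap I| \ge c\, d^m$ for $m$ sufficiently large. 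The case $n=2$ is analogous, via $\Phi_d^2|_I$ onto $\R/\Z$ and excluding the $d$ preimages lying in the periodic cycle of $\Phi_d^2(\alpha)$.

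The main obstacle is the bookkeeping in the small cases $n \in \{1,2\}$: extracting a uniform positive constant $c$ requires a quantitative form of the equidistribution of periodic points in short arcs and produces $O(1)$ error terms, so the conclusion holds only for $m$ larger than some $m_0 = m_0(d)$. This is harmless in the application to Theorem~\ref{thm:countSPCF}, where $m_{k,i} \to \infty$ by hypothesis.
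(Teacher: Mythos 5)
Your proof is correct, and while it shares the paper's basic toolkit (M\"obius inversion to bound the number $N(k)$ of exact-period-$k$ points, and transferring a global count into the interval $I=[x,x+d^{-2}]$ via iterates of $\Phi_d$), the case split and the mechanism in the main case are genuinely different. The paper's main case is $n\ge K+1$ for an unspecified large $K$: it takes the $d-1$ strictly preperiodic $\Phi_d$-preimages of each point of an exact-period-$(m-n)$ cycle and then uses a soft equidistribution statement (at least $d^{k-3}$ of the $d^k$ preimages of any point under $\Phi_d^{k}$ lie in $I$ once $k\ge K$); the range $1\le n\le K$ is handled by pushing $I$ forward and subtracting the periodic contribution. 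You instead exploit that $\Phi_d^2$ restricted to $[x,x+d^{-2})$ is an exact bijection onto $\R/\Z$ and that, for $n\ge3$, the $d^2$-to-$1$ map $\Phi_d^2$ carries $P(m,n)$ onto $P(m-2,n-2)$ with every preimage of a point of $P(m-2,n-2)$ again in $P(m,n)$; this converts the local count into the exact global identity $\card(P(m-2,n-2))=(d-1)d^{n-3}N(m-n)$. That buys a cleaner main case: no soft constant $K$, only the two exceptional values $n\in\{1,2\}$, and a bound valid for \emph{all} $m>n\ge3$ with an explicit constant. Your treatment of $n\in\{1,2\}$ is essentially the paper's second case, and like the paper's it only yields the estimate for $m\ge m_0(d)$; you are right that this is unavoidable (for $d=2$ one has $P(2,1)=\{1/2\}$, which misses many intervals of length $1/4$, so the statement as literally written fails for small $m$) and harmless in the application, where $m_{k,i}\to\infty$. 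Two minor imprecisions to fix in a write-up: the error in counting exact-period-$k$ points in a short arc is one $O(1)$ term per divisor of $k$ in the M\"obius sum, not a single $O(1)$ --- still negligible against $d^k$; and in the case $n=2$ the $d$ excluded preimages are the $\Phi_d$-preimages \emph{of} the cycle point $y_{-1}$, not points lying in the cycle.
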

\begin{proof}[Proof of Lemma~\ref{lmcarpmn}]
Observe that a point $y\in \R/\Z$ has exactly $d^k$ preimages by $\Phi_d^k$ and that these preimages are
equidistributed on $\R/\Z$. It follows that 
\begin{equation}\label{eqprem}
\card \left( \Phi_d^{-k}\{ y \}  \cap \left[x, x + \frac1{d^2}\right] \right) 
\ge d^{k-3}
\end{equation}
for all $k \ge K$ large enough and for all $x$ and $y$.

The number of periodic points of period divisible by $m-n$ is equal to $d^{m-n} -1$. 
It follows from the M\"obius inversion formula that the number 
$\Per(\Phi_d, m-n)$
of periodic points of period equal to 
$m-n$ can be bounded from below by
$$
\Per(\Phi_d, m-n) =
\sum_{l | (m-n)} \mu \left( \frac{m-n} {l}\right) (d^l -1 )
\ge 
d^{m-n} - \frac{d^{1+ \frac{m-n}{2}} -1}{d-1}~.
$$
Now any point in this cycle admits $(d-1)$ preimages that lie in $P(m-n,1)$, and each of these
points admits $\ge d^{(n-1)-3}$ points in $P(m,n)\cap \left[x, x + \frac1{d^2}\right]$ by~\eqref{eqprem} if $n\ge K+1$.
In this case, we thus get
\begin{multline*}
\card\left( P(m,n) \cap \left[x, x + \frac1{d^2}\right] \right)\ge d^{n-4} (d-1)\, 
\left(d^{m-n} - \frac{d^{1+ \frac{m-n}{2}} -1}{d-1}\right) 
\ge \\
 \frac{d-1}{d^4}\, d^m \left( 1 - \frac{d}{d-1} d^{(n-m)/2}\right)
\ge   \frac{d-1}{d^4}\, d^m \left( 1 - \frac{1}{d-1}\right)
\end{multline*}
at least when $m-n\ge2$. When $m-n=1$, observe that one can directly get the bound
$\card P(m,n) \ge d^{n-4} (d-1)\, d^{m-n}$.
This concludes the proof when $n \ge K+1$. 

\smallskip

When $1 \le n \le K$, we proceed as follows. Pick any closed segment $I \subset \R/\Z$ of sufficiently  length $|I| < 1$. Observe that 
$$
\left| \card ( I \cap \{\a , \, d^k \a = \a \} ) - \frac{d^k -1}{|I|} \right| \le 2
$$
when $k\ge 1$. This proves 
$$\card \left( I \cap \Per(\Phi_d, m-n) \right) \ge \frac{d^{m-n} -1}{|I|}  - \sum_{l\le (m-n)/2} \frac{d^l-1}{|I|} +2
\ge c\, \frac{d^{m-n}}{|I|}$$
for a suitable constant $c>0$. Observe that  we can  chose $c$ to be arbitrarily close to $1$ 
if $m \ge m_0 \gg1$.  Now since  $\Phi_d^n [x , x + d^{-2}]$ contains a segment of length at least $1/d$, for all $m\ge m_0$ 
we conclude that 
\begin{multline*}
\card ( P(m,n) \cap [x , x + d^{-2}] ) \ge 
\\
\card  \left(  \Per(\Phi_d, m-n)\cap  \Phi_d^n [x , x + d^{-2}] \right) 
- 
\card  \left(  \Per(\Phi_d, m-n)\cap [x , x + d^{-2}]\right) 
\\ 
\ge c \, d^{m-n-1} - \left( \frac{d^{m-n}-1}{d^2}+2\right) 
\ge c'\, d^{m-n}
\end{multline*}
which concludes the proof in this case too.
\end{proof}

Fix for a moment $\um,\un$ with $m_i>n_i\ge 1$ for all $i$, and
choose  a random point $\a_1$ in $P(m_1,n_1)$. We have at least $cd^{m_1}$ possibilities. Next choose a point in 
$\a_2\in P(m_2,n_2)$ in such a way that $\a_2\ge \a_1$ in the standard orientation of $\R/\Z$ and
$$
\a_1 + \frac1{d} < \a_2 < \a_1 + \frac1{d-1}~.
$$
By~\eqref{eq:victory}, we have at least $c  d^{m_2}$ possibilities.
We continue inductively constructing $\a_{j+1}\in P(m_{j+1},n_{j+1})$ such that $\a_{j+1}\ge \a_j$ and
$$
a_j + \frac1{d} < \a_{j+1} < \a_j+ \frac1{d-1}~.
$$
We again have at least at least $cd^{m_{j+1}}$ possibilities. We end up with at least $c^{d-1} d^{|\um|}$  possible $(d-1)$-tuples $(\a_1, \ldots , \a_{d-1})$
such that $\a_i \in P(m_i, n_i)$ for all $i$. 

To any $(\a_1, \ldots , \a_{d-1})$ as above
we let $\theta_i = \{ \a_i, \a_i + \frac1{d}\}$. Observe that, by construction, we  have $|\a_i - \a_j| > \frac1{d}$
for all $1\le i\neq j\le d-1$. Hence the $(d-1)$ pairs $\theta_1, \ldots , \theta_{d-1}$ are  unlinked. 

\smallskip

Applying this construction to $\um_k,\un_k$ implies
$\card(\mathsf{C}_k) \ge c^{d-1}\, d^{|\um_k|}$ as required.
\end{proof}


\subsection{Counting postcritically finite polynomials with critical relations.}

\begin{theorem}\label{thmcriticalrelations}
Pick any two sequences of $(d-1)$-tuples $(\um_k,\un_k)$ of non-negative integers such that 
$m_{k,i}> n_{k,i}>0$ and $m_{k,i}-n_{k,i} \rightarrow\infty$ for all $0\le i\le d-2$. 
\begin{equation}\label{eqwouf}
\card \left( \{ (c,a)\in \PCF^*(\um_k,\un_k), \, P_{c,a}^l(c_i) = P_{c,a} ^{l'} (c_j) \text{ for } i\neq j \text{ and } l, l' \ge 0\} \right) = o (d^{|\um_k|})~.
\end{equation}
\end{theorem}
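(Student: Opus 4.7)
The set in~\eqref{eqwouf} is contained in $\bigcup V_{i,j,l,l'}\cap \PCF^*(\um_k,\un_k)$, where the union runs over $i\ne j$ in $\{0,\ldots,d-2\}$, $0\le l\le m_{k,i}-1$, $0\le l'\le m_{k,j}-1$, and $V_{i,j,l,l'}:=\{(c,a)\in\A^{d-1}_\C\,:\,P^l_{c,a}(c_i)=P^{l'}_{c,a}(c_j)\}$ is an algebraic hypersurface of degree at most $d^{\max(l,l')}$. The reduction to these finite ranges uses that the orbit of each $c_i$ in a polynomial of $\PCF^*(\um_k,\un_k)$ has exactly $m_{k,i}$ distinct points. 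We may further restrict to quadruples satisfying $m_{k,i}-n_{k,i}=m_{k,j}-n_{k,j}$, since an actual collision at any polynomial in $\PCF^*(\um_k,\un_k)$ forces the two periods to coincide.

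My plan is to bound $|V_{i_0,j_0,l_0,l'_0}\cap\PCF^*(\um_k,\un_k)|$ by B\'ezout after replacing one of the periodicity equations by the collision equation $V=V_{i_0,j_0,l_0,l'_0}$. The key algebraic fact is that on $V\cap\Per_{i_0}(m_{k,i_0},n_{k,i_0})$, the equation $\Per_{j_0}(m_{k,j_0},n_{k,j_0})$ is implied: iterating the collision yields $P^{l_0+k}(c_{i_0})=P^{l'_0+k}(c_{j_0})$ for all $k\ge0$, and combined with the periodicity of $c_{i_0}$'s orbit with period $p=m_{k,i_0}-n_{k,i_0}$ this forces the required periodicity of $c_{j_0}$'s orbit once the periods are equal. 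Since $\PCF(\um_k,\un_k)$ is $0$-dimensional (being contained in the compact support of $\mu_\bif$), B\'ezout applied to the $(d-1)$ equations $V$ and $\Per_{i'}(m_{k,i'},n_{k,i'})$, $i'\ne j_0$, in $d-1$ variables yields
$|V\cap\PCF^*(\um_k,\un_k)|\le \deg(V)\cdot d^{|\um_k|-m_{k,j_0}}$.
Exchanging the roles of $i_0$ and $j_0$ we may replace $m_{k,j_0}$ by $\max(m_{k,i_0},m_{k,j_0})$, and since $\max(l_0,l'_0)\le\max(m_{k,i_0},m_{k,j_0})-1$ we deduce the per-quadruple estimate $|V_{i_0,j_0,l_0,l'_0}\cap\PCF^*(\um_k,\un_k)|\le d^{|\um_k|-1}$.

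The main obstacle is the union bound: naively summing over all $O(|\um_k|^2)$ quadruples yields $O(|\um_k|^2\,d^{|\um_k|-1})$, which is not $o(d^{|\um_k|})$ when the $m_{k,i}$ grow. To bypass this, I would associate to each polynomial in the exceptional set a canonical ``first collision'' $(i_0,j_0,l_0^*,l'^*_0)$, defined as the lex-minimal quadruple witnessing a critical-orbit coincidence, and restrict the union to canonical quadruples only. For canonical collisions the B\'ezout estimate above can be sharpened either by a refined transversality analysis at the collision point (in the spirit of Section~\ref{sec:trans}) or by exploiting the combinatorial description of $\PCF^*$ via critical portraits used in the proof of Theorem~\ref{thm:countSPCF}, where a canonical collision corresponds to a specific coincidence in the $(d-1)$-tuple of unlinked pairs; either refinement saves the needed polynomial factor and produces a total estimate of order $o(d^{|\um_k|})$ thanks to the hypothesis $m_{k,i}-n_{k,i}\to\infty$.

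The hardest step will be rigorously justifying the implication ``$V\cap\Per_{i_0}\Rightarrow \Per_{j_0}$'' and the resulting reduction to isolated points in the truncated B\'ezout system, because the failure of transversality at the collision (precisely the situation excluded by Theorem~\ref{cortransverse}) threatens to place some points of $V\cap\PCF^*(\um_k,\un_k)$ on positive-dimensional components of $V\cap\bigcap_{i'\ne j_0}\Per_{i'}$. This requires a careful case analysis depending on the position of $(l_0,l'_0)$ relative to the preperiodic times $(n_{k,i_0},n_{k,j_0})$.
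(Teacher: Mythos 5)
There is a genuine gap, and it is the one you flag and then do not close. Your stratification of the exceptional set by collision data $(i,j,l,l')$ and your per-stratum bound of the shape $\deg(V_{i,j,l,l'})\cdot d^{|\um_k|-\max(m_{k,i},m_{k,j})}$ are fine in spirit (the paper obtains an equivalent bound, though by counting critical portraits via Kiwi's bijection rather than by B\'ezout, after first removing the degenerate-critical-point locus by a separate lemma). But with $l\le m_{k,i}-1$ and $l'\le m_{k,j}-1$ this only gives $d^{|\um_k|-1}$ per stratum, and the union over the $O(|\um_k|^2)$ strata is not $o(d^{|\um_k|})$. Your proposed repair --- restrict to a lex-minimal ``canonical first collision'' and assert that a refined transversality or portrait analysis ``saves the needed polynomial factor'' --- is not an argument: passing to canonical quadruples does not decrease the number of terms in the union, and no sharpened per-stratum estimate is actually produced. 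A telling symptom is that your argument never uses the hypothesis $m_{k,i}-n_{k,i}\to\infty$, only $m_{k,i}\to\infty$.

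The missing ingredient is a dynamical reduction of the collision times, not a better count. If the orbits of $c_i$ and $c_j$ meet, they share their terminal cycle, so the first time the orbit of $c_i$ enters the orbit of $c_j$ occurs at some $l\le n_{k,i}$; and, permuting $i$ and $j$ if necessary so as to go around the common cycle the short way, one may further arrange $l'\le n_{k,j}+\frac12(m_{k,j}-n_{k,j})$. On these restricted ranges $\deg(V_{i,j,l,l'})\le d^{\max(l,l')}$ loses at least a factor $d^{\frac12\min_h(m_{k,h}-n_{k,h})}$ against $d^{\max(m_{k,i},m_{k,j})}$, so each stratum is $O\bigl(d^{|\um_k|}\,d^{-\frac12\min_h(m_{k,h}-n_{k,h})}\bigr)$ and the sum over the remaining quadruples becomes $o(d^{|\um_k|})$; this is exactly where the hypothesis $m_{k,i}-n_{k,i}\to\infty$ is consumed. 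Without some version of this reduction your plan does not close; with it, your B\'ezout count (plus a separate treatment of parameters with a degenerate critical point, and a verification that the truncated system $\{V\}\cup\{\Per_{i'}(m_{k,i'},n_{k,i'})\}_{i'\ne j}$ has zero-dimensional intersection inside the postcritically finite locus) would go through along lines parallel to the paper's.
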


\begin{proof}[Proof of Theorem~\ref{thmcriticalrelations}]
Suppose that there exists a critical relation $P_{c,a}^l(c_i) = P_{c,a} ^{l'} (c_j)$ for some $i\neq j$  and some integers $l, l' \ge 0$. Since $c_i$ and $c_j$ are eventually mapped to the same cycle after $n_{k,i}$ and $n_{k,j}$ iterates respectively,  the minimal integer $h$ (resp. $h'$) such that $P_{c,a}^h(c_i)$ (resp. $P_{c,a}^{h'}(c_j)$) belongs to the orbit of $c_j$ (resp. of $c_i$)
is less than $n_{k,i}$ (resp. than $n_{k,j}$). Permuting $i$ and $j$ if necessary we may assume that
$P^{h"} (P_{c,a}^{h'}(c_j)) = P_{c,a}^h(c_i)$ for some $h"$ less than half the period of the cycle attracting $c_i$ and $c_j$. 
Summarizing, we may assume that $l\le n_{k,i}$ and $l' \le n_{k,j} + \frac{m_{k,j} -n_{k,j}}2$.

\begin{lemma}\label{lmdegenerate}
Pick any two sequences of $(d-1)$-tuples $(\um_k,\un_k)$ of non-negative integers such that 
$m_{k,i}> n_{k,i}$ and $m_{k,j}\rightarrow\infty$ for all $0\le i\le d-2$. 
\begin{center}
$ \card \left( \{ (c,a)\in \PCF(\um_k,\un_k), \, \text{ some critical point is degenerate} \} \right) = o (d^{|\um_k|})~.$
\end{center}
\end{lemma}
We may thus restrict our attention to parameters having only simple critical points. 

As in the previous Section~\cite[Theorem 7.18]{favredujardin} applies. The subset of $\PCF^*(\um_k,\un_k)$ with only simple critical points, and such that $P^l_{c,a}(c_i) =P^{l'}_{c,a}( c_j)$ is in bijection with a subset of
\begin{multline*}
\mathsf{C}_k(i,j,l,l')
:=  \{  \Theta \in \mathsf{Cb}_0 ,\, \theta_h = (\a_h, \a'_h),\, 
d^{m_{k,h}} \a_h = d^{n_{k,h}} \a_h, \text{ for all } h\neq j \\
\text{ and }
d^{l'} \theta_j = d^l \theta_i
\}~.
\end{multline*}
For any $(i,j,l,l')$, the set $\mathsf{C}_k(i,j,l,l')$ has cardinality at most
\begin{eqnarray}
d^{d-1} \, d^{l'}\, \prod_{h\neq j}d^{m_{k,h}-n_{k,h}}  \le d^{|\um_k| - m_{k,j} + l' + d-1}~.
\label{ineg:victory}
\end{eqnarray}
Denote by $C(k)$ the right hand side of~\eqref{eqwouf}.
Then our discussion shows that
\begin{eqnarray*}
C(k) & \le & \sum_{i\neq j} \sum_{l \le n_k,j} \sum_{l' \le n_{k,j} +\frac12 (m_{k,j} - n_{k,j})} \card (\mathsf{C}_k(i,j,l,l'))\\
 & \le &
d^{d-1} \, d^{|\um_k|} \left(  \sum_{i \neq j } n_{k,i} m_{k,j}\, d^{- \frac{m_{k,j} - n_{k,j}}{2}} \right)= o (d^{|\um_k|} )
\end{eqnarray*}
since by assumption $m_{k,i} - n_{k,i} \to \infty$.
\end{proof}

\begin{proof}[Proof of Lemma~\ref{lmdegenerate}]
A critical point $c_i$ of $P_{c,a}$ is degenerate when $\ord_{c_i}(P_{c,a}) \ge 3$.
This is equivalent to having $c_i = c_j$ for some $i\neq j$, whence
\begin{eqnarray*}
\{ (c,a)\in \PCF(\um_k,\un_k), \, \text{ some critical point is degenerate} \}=\bigcup_{i\neq j}\{c_i=c_j\}\cap \PCF(\um_k,\un_k)~.
\end{eqnarray*}
Since $\PCF(\um_k,\un_k)\cap\{c_i=c_j\}=\{c_i=c_j\}\cap\bigcap_{h\neq i}\Per_h(m_{k,h},n_{k,h})$,  Bezout's theorem implies
\begin{eqnarray*}
\card(\PCF(\um_k,\un_k)\cap\{c_i=c_j\})\leq d^{|\um_k|-\um_{k,i}}\leq d^{|\um_k|-\min_{0\leq l\leq d-2}m_{k,l}}
\end{eqnarray*}
so that 
$$
\card \left( \{ (c,a)\in \PCF(\um_k,\un_k), \, \text{ some critical point is degenerate} \} \right)
\le C\, d^{|\um_k| - \min_i m_{k,i}}~,
$$
with $C=\card(\{ (i,j), \ i\neq j\}$.
Since $m_{k,i} \to \infty$ for all $i$, the result follows.
\end{proof}


\subsection{Proof of Theorem~\ref{maintm1}.} 
We rely on the following key
\begin{lemma}\label{lemSPCFcount}
Pick any two sequences of $(d-1)$-tuples $(\um_k,\un_k)$ of non-negative integers such that 
$m_{k,i}> n_{k,i}$ and $m_{k,i}-n_{k,i} \rightarrow\infty$ for all $0\le i\le d-2$. For any proper algebraic subvariety $V\subset\A^{d-1}_\C$, we have
$$ 
\lim_{k\to\infty}
\frac{\card \left( V\cap \PCF^*(\um_k,\un_k) \right)}{\card (\PCF^*(\um_k,\un_k) )} = 0~.$$
\end{lemma}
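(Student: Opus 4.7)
The plan is to sandwich the ratio by combining the transversality-based upper bound of Theorem~\ref{thmupperbound} with the counts of degenerate configurations coming from Theorem~\ref{thmcriticalrelations} and Lemma~\ref{lmdegenerate}, then dividing by the lower bound on $\card(\PCF^*(\um_k,\un_k))$ provided by Theorem~\ref{thm:countSPCF}.

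First, I would decompose
$$\PCF^*(\um_k,\un_k) = A_k \sqcup B_k,$$
where $A_k := \TPCF(\um_k,\un_k) \cap \PCF^*(\um_k,\un_k)$ consists of the parameters of $\PCF^*(\um_k,\un_k)$ at which the $(d-1)$ hypersurfaces $\Per_i(m_{k,i},n_{k,i})$ are smooth and meet transversally, and $B_k := \PCF^*(\um_k,\un_k) \setminus A_k$. By Theorem~\ref{cortransverse}, transversality is guaranteed at every $(c,a)\in\PCF^*(\um_k,\un_k)$ for which $P_{c,a}$ has only simple critical points \emph{and} the forward orbits of its critical points are pairwise disjoint. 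Consequently $B_k$ is contained in the union of
\begin{itemize}
\item the set of $(c,a) \in \PCF(\um_k,\un_k)$ at which some critical point is degenerate, and
\item the set of $(c,a) \in \PCF^*(\um_k,\un_k)$ carrying a critical relation $P^l_{c,a}(c_i) = P^{l'}_{c,a}(c_j)$ for some $i \neq j$ and some $l, l' \geq 0$.
\end{itemize}

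Next I would apply Lemma~\ref{lmdegenerate} to bound the first piece and Theorem~\ref{thmcriticalrelations} to bound the second. Since the hypothesis $m_{k,i}-n_{k,i}\to\infty$ forces $m_{k,i}\to\infty$ (because $m_{k,i}>n_{k,i}\ge 0$), both hypotheses are met, and both bounds are $o(d^{|\um_k|})$; whence $\card(B_k) = o(d^{|\um_k|})$. On the other hand, $A_k \subset \TPCF(\um_k,\un_k)$, so Theorem~\ref{thmupperbound} yields
$$\card(V \cap A_k) \le \card(V \cap \TPCF(\um_k,\un_k)) = o(d^{|\um_k|}).$$
Adding these two estimates gives
$$\card\bigl(V \cap \PCF^*(\um_k,\un_k)\bigr) \le \card(V \cap A_k) + \card(B_k) = o(d^{|\um_k|}).$$

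Finally, Theorem~\ref{thm:countSPCF} provides a constant $c>0$ (depending only on $d$) such that $\card(\PCF^*(\um_k,\un_k)) \ge c\, d^{|\um_k|}$ for all $k$ large enough; dividing yields the claimed convergence to $0$. The real work has already been done in Theorems~\ref{thmupperbound}, \ref{thm:countSPCF}, and \ref{thmcriticalrelations}: the only subtlety in the present lemma is correctly identifying $\PCF^* \setminus \TPCF$ with the locus of ``degenerate configurations'' controlled by those theorems, and this identification is exactly what Theorem~\ref{cortransverse} provides.
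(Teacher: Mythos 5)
Your proposal is correct and follows essentially the same route as the paper: bound $\card(V\cap\TPCF(\um_k,\un_k))$ by Theorem~\ref{thmupperbound}, bound the complement $\PCF^*(\um_k,\un_k)\setminus\TPCF(\um_k,\un_k)$ via Theorem~\ref{cortransverse} together with Theorem~\ref{thmcriticalrelations}, and divide by the lower bound of Theorem~\ref{thm:countSPCF}. The only (harmless) difference is that you invoke Lemma~\ref{lmdegenerate} separately for degenerate critical points, whereas the paper subsumes that case into Theorem~\ref{thmcriticalrelations} since $c_i=c_j$ is the critical relation with $l=l'=0$.
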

Let $\um_k = (m_{k,0}, \ldots, m_{k,d-2})$ and
$\un_k = (n_{k,0}, \ldots, n_{k,d-2})$. We want to apply Theorem~\ref{tmyuan} to
$Z_k:= \PCF^* (\um_k,\un_k)$ and the metrics induced by 
$G_v$ on $\O(1) \to \p^{d-1}_{\C_v}$ for each place $v \in M_\Q$.
By Theorem~\ref{thm:adelic}, these metrics induce a semipositive adelic metric.

Note that a postcritically finite polynomial that is not strictly postcritically finite
admits a periodic critical point. It follows that $\PCF^* (\um_k,\un_k)$ are all defined by
equations defined over $\Q$.
It is also clear that $h_\bif(Z_k) =0$ for each $k$. It is however not true that $Z_k$ is 
generic. To get around this problem, we proceed as follows. 

First we enumerate all irreducible hypersurfaces $\{ D_q\} _{q \in \N}$ of 
$\A_\C^{d-1}$ that are defined over $\Q$. Fix $\e>0$. We shall construct a 
sequence of sets $Z'_{k,\e} \subset Z_k$ such that:
\begin{enumerate}
\item
$\card (Z'_{k,\e}) \ge (1-\e) \card (Z_k)$ for all $k$;
\item
$Z'_{k,\e}$ is $\textup{Gal} (\bar{\Q}/\Q)$-invariant;
\item
for any $q$, $Z'_{k,\e} \cap D_q = \emptyset$ for all $k\ge K(q)$ large enough.
\end{enumerate}
Suppose for a moment that we have found such a sequence. The last condition 
implies $Z'_{k,\e}$ to be generic. By Theorem~\ref{tmyuan} we conclude that 
$$
\mu_{k,\e}'\pe\frac1{\card (Z'_{k,\e})} \sum_{x\in Z_{k,\e}'}\delta_x \to (dd^cG)^{d-1}=\mu_\bif~.
$$
Now pick any continuous function $\varphi$ with compact support on $\C^{d-1}$. Then 
\begin{eqnarray*}
\left|\int \varphi\, \mu_k - \int \varphi \, \mu_\bif\right|
\le 
\left|\int \varphi\,  \mu_k -  \int \varphi\,\mu_{k,\e}'\right| +
\left| \int \varphi\,\mu_{k,\e}'- \int \varphi \, \mu_\bif\right|.
\end{eqnarray*}
The second term in the sum tends to $0$ as $k\to \infty$. The first one can be bounded from above as follows:
\begin{eqnarray*}
\left|\int \varphi\,  \mu_k -  \int \varphi\,\mu_{k,\e}'\right| & \le & 
\frac1{\card (Z_k)}\int |\varphi|\sum_{x\in Z_k\setminus Z_{k,\e}'}\delta_x\\
& & + \left( \frac1{\card (Z'_{k,\e})} - \frac1{\card (Z_k)} \right)\int|\varphi|\sum_{x\in Z_{k,\e}'}\delta_x\\
& \le & 2\,\e\sup |\varphi|~.
\end{eqnarray*}
This shows that 
$$\limsup_{k\to\infty} \left|\int \varphi\,  \mu_k - \int \varphi \, \mu_\bif\right| \le 2\,\e \sup |\varphi|$$
for all $\e>0$. Letting $\e\to0$ we get that $\int \varphi\,  \mu_k \to \int \varphi \mu_\bif$,
and this concludes the proof.

\medskip

We are thus left with the construction of the sequence $Z'_{k,\e}$.
To do so we proceed as follows. 
By Lemma~\ref{lemSPCFcount}, for all $k\ge k_1$ we have
$\card (D_1\cap Z_k) \le \frac{\e}{4} \card (Z_k)$.
We set $Z^{(1)}_k = Z_k$ if $k < k_1$, and  $Z^{(1)}_k := Z_k \setminus (Z_k\cap D_1)$ if $k\ge k_1$.
Observe that by construction $\card (Z^{(1)}_k) \ge (1 - \frac{\e}{4}) \card (Z_k)$ for all $k$, and
$Z^{(1)}_k$ are all $\textup{Gal} (\bar{\Q}/\Q)$-invariant since $D_1$ is defined over $\Q$.

Next we find $k_2 > k_1$ such that $\card (D_2\cap Z_k) \le \frac{\e}{8} \card (Z_k)$ for all $k\ge k_2$.
And we set  $Z^{(2)}_k = Z^{(1)}_k$ if $k < k_2$, and  $Z^{(2)}_k := Z^{(1)}_k \setminus (Z^{(1)}_k\cap D_2)$ if $k\ge k_2$. Here again $Z^{(2)}_k$ are all $\textup{Gal} (\bar{\Q}/\Q)$-invariant, and
we have $\card (Z^{(2)}_k) \ge (1 - \frac{\e}{4})  (1 - \frac{\e}{8})\card (Z_k)$ for all $k$.

Recursively we find $k_j > k_{j-1}$ such that $\card (D_j\cap Z_k) \le \frac{\e}{2^{j+1}} \card (Z_k)$ for all $k\ge k_j$.
And we set  $Z^{(j)}_k = Z^{(j-1)}_k$ if $k < k_j$, and  $Z^{(j)}_k := Z^{(j-1)}_k \setminus (Z^{(j-1)}_k\cap D_j)$ if $k\ge k_j$. These are $\textup{Gal} (\bar{\Q}/\Q)$-invariant finite sets such that $\card (Z^{(j)}_k) \ge \prod_{1\le l \le j} (1 - \frac{\e}{2^{j+1}})\card (Z_k)$ for all $k$.

We set $Z'_{k,\e} := Z_k^{(j)}$ for all $k\le k_j$. This definition is coherent since $ Z_k^{(j)} =  Z_k^{(j')}$
for all $k < \min\{ k_j, k_{j'}\}$. The sets  $Z'_{k,\e}$ are $\textup{Gal} (\bar{\Q}/\Q)$-invariant
since all $ Z_k^{(j)}$ are. We have 
$$
\card (Z'_k) \ge \prod_{j\ge 1} \left(1 - \frac{\e}{2^{j+1}}\right)\, \card (Z_k)
\ge (1 -\e)\, \card (Z_k) $$ 
for all $k$. Finally, pick any integer $q\ge1$. Then for $k\ge k_q$ we have
$Z'_{k,\e} \cap D_q \subset Z_k^{(q)} \cap D_q = \emptyset$. This completes the construction of the sequence $Z'_{k,\e}$.

\begin{proof}[Proof of Lemma~\ref{lemSPCFcount}]
Theorem~\ref{thm:countSPCF} implies $\card (\PCF^*(\um_k,\un_k))\ge c\, d^{|\um_k|}$ for some positive $c>0$.
On the other hand, Theorem~\ref{thmupperbound} implies
$\card \left( V\cap \TPCF(\um_k,\un_k) \right) = o(d^{|\um_k|})$.
By Theorem~\ref{cortransverse}, the complement of $\TPCF(\um_k,\un_k)$ in $\PCF^*(\um_k,\un_k)$ is included in the set where a critical relation appears, whose cardinality is $o( d^{|\um_k|})$ by Theorem~\ref{thmcriticalrelations}.
Whence
\begin{multline*}
\card \left( V\cap \PCF^*(\um_k,\un_k) \right) \le \card \left( V\cap \TPCF(\um_k,\un_k) \right) \\ + 
\card \left( \PCF^*(\um_k,\un_k)\setminus \TPCF(\um_k,\un_k) \right) = o( d^{|\um_k|})~,
\end{multline*}
which concludes the proof.
\end{proof}

\begin{remark}
One can ask whether the assumptions $m_{k,i}> n_{k,i}>0$ and $m_{k,i} - n_{k,i} \rightarrow \infty$ can be weakened to $m_{k,i} \ge n_{k,i}\ge 0$ and $m_{k,i} \to \infty$. 
Removing these assumptions would probably require the notion of Hubbard trees which classify postcritically finite polynomials up to conjugacy, see~\cite{Poirier}. 
\end{remark}


\section{Distribution of hyperbolic parameters.}
We aim at proving Theorem~\ref{maintm3} from the introduction.
To that end, we first use Yuan's result to prove Theorem~\ref{maincrittm3}
from which it is not difficult to deduce Theorem~\ref{maintm3}
in the case all multipliers are equal to $0$.
Then we extend it to arbitrary multipliers of norm $<1$ using a paramaterization of hyperbolic components of the interior of the connectedness locus by the multipliers of the attracting cycles and Briend-Duval's length-area estimate for holomorphic disks (see \cite[Appendix]{briendduval2}).

\subsection{Equidistribution of centers of hyperbolic components.}~
To simplify notation, we denote by $\Per_i^*(m)$ (resp. $\Per_i(m)$) the set of polynomials $P_{c,a}$ such that 
$c_i$ is a periodic point of period exactly (resp. divisible by) $m$. 
Finally for any $(d-1)$-tuple $\um=(m_0,\ldots,m_{d-2})$, we also  set
$$
\Per(\um)\pe\bigcap_{i=0}^{d-2}  \Per_i(m_i)  \text{ and }
\Per^*(\um)\pe\bigcap_{i=0}^{d-2}  \Per_i^*(m_i)  ~.$$
We begin with proving Theorem~\ref{maincrittm3} from the introduction. 
We follow exactly the same approach as for proving Theorem~\ref{maintm1}.

First we have the following transversality result. 
\begin{theorem}\label{cortransverse2}
Let $\um$ be any $(d-1)$-tuple of integers such that $m_j>0$ for all $0\le j \le d-2$. Let $(c,a)\in\Per(\um)$ be such that $P_{c,a}$ has only simple critical points. Then the $(d-1)$ hypersurfaces $\Per_j(m_j)$ are smooth and intersect transversely $(c,a)$.
\end{theorem}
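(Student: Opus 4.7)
The plan is to deduce this theorem directly from Corollary~\ref{corAdam2} together with Proposition~\ref{proptransverse}, following line-by-line the strategy used in proving Theorem~\ref{cortransverse} from Corollary~\ref{corAdam}. The essential work has already been done in Section~\ref{sec:trans}: what remains is a routine transversality restriction argument from $\poly_d$ down to the family $\Lambda = \{P_{c,a}, (c,a) \in \C^{d-1}\}$.

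First, I would view the point $P := P_{c,a}$ as a polynomial in $\poly_d$. Since $(c,a)\in \Per(\um)$, each critical point $c_i$ is periodic with period $q_i \mid m_i$, and the assumption that $P_{c,a}$ has only simple critical points means $P$ is hyperbolic postcritically finite with periodic simple critical points. Thus the hypotheses of Theorem~\ref{tmAdam2} (and hence of Corollary~\ref{corAdam2}) are satisfied, yielding that the $(d-1)$ local hypersurfaces
\[
\widehat{H}_i := \{Q \in V_P \subset \poly_d :\ Q^{m_i}(c_i(Q)) = c_i(Q)\}
\]
are smooth at $P$ and intersect transversally in $\poly_d$, with $\bigcap_i T_P\widehat{H}_i = T_P\mathcal{O}(P)$ by a dimension count (each $\widehat{H}_i$ has codimension $1$, they are $(d-1)$ in number, and every $\widehat{H}_i$ contains $\mathcal{O}(P)$).

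Next, note that $\Per_i(m_i) = \widehat{H}_i \cap \Lambda$ locally at $P$. Proposition~\ref{proptransverse} ensures that $\Lambda$ is smooth of dimension $d-1$ and that $T_P\Lambda \oplus T_P\mathcal{O}(P) = T_P\poly_d$. Since $T_P\mathcal{O}(P) \subset T_P\widehat{H}_i$ for each $i$, we have $T_P\widehat{H}_i + T_P\Lambda = T_P\poly_d$, so $\widehat{H}_i$ is transverse to $\Lambda$ and $\Per_i(m_i)$ is smooth of codimension $1$ in $\Lambda$ at $P$, with $T_P\Per_i(m_i) = T_P\widehat{H}_i \cap T_P\Lambda$.

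Finally, to conclude transversality in $\Lambda$, I would compute
\[
\bigcap_{i=0}^{d-2} T_P\Per_i(m_i) = \left(\bigcap_{i=0}^{d-2} T_P\widehat{H}_i\right) \cap T_P\Lambda = T_P\mathcal{O}(P) \cap T_P\Lambda = 0,
\]
where the last equality uses again Proposition~\ref{proptransverse}. Since each hypersurface $\Per_i(m_i)$ has codimension $1$ in $\Lambda$ (dimension $d-1$), vanishing of the intersection of tangent spaces at $P$ is exactly the definition of transversality for $d-1$ hypersurfaces in a $(d-1)$-dimensional manifold. This is the general fact already invoked in the proof of Theorem~\ref{cortransverse}. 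There is no real obstacle here, since the hard analytic input (the quadratic differential identity $\ker \nabla_P = 0$ and the guided vector field construction) has been pushed into Theorem~\ref{tmAdam2}; only the linear algebra of restricting to $\Lambda$ remains.
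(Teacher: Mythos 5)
Your proposal is correct and follows exactly the route the paper takes: it deduces the statement from Corollary~\ref{corAdam2} together with Proposition~\ref{proptransverse} by restricting the transversal intersection in $\poly_d$ to the slice $\Lambda$, which is the same reduction the paper uses for Theorem~\ref{cortransverse} (with Corollary~\ref{corAdam} replaced by Corollary~\ref{corAdam2}). You merely spell out the linear-algebra step that the paper leaves as a ``general fact,'' and your verification of it is accurate.
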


\begin{proof}[Proof of Theorem~\ref{cortransverse2}]
The proof is similar to the one of Theorem~\ref{cortransverse}, replacing
 Corollary~\ref{corAdam} by
 Corollary~\ref{corAdam2}.
\end{proof}

Next we estimate the proportion of points of $\Per^*(\um_k)$ lying in a fixed proper subvariety.

\begin{theorem}\label{thm:ouf}
Pick any sequence of $(d-1)$-tuples $(\um_k)$ of non-negative integers such that 
$m_{k,j}\rightarrow\infty$ for all $0\le j\le d-2$ and $m_{k,i}\neq m_{k,j}$ for all $i\neq j$. For any proper algebraic subvariety $V\subset\A^{d-1}_\C$, we have
$$ 
\lim_{k\to\infty}
\frac{\card \left( V\cap \Per^*(\um_k) \right)}{\card (\Per^*(\um_k) )} = 0~.$$
\end{theorem}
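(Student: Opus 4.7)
The plan is to mirror the proof of Lemma~\ref{lemSPCFcount}, establishing two estimates: an upper bound $\card(V\cap \Per^*(\um_k)) = o(d^{|\um_k|})$ for any proper subvariety $V$, and a lower bound $\card(\Per^*(\um_k)) \geq c\, d^{|\um_k|}$ for some $c>0$.

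For the upper bound I would adapt the proof of Theorem~\ref{thmupperbound} to the hyperbolic setting. By Theorem~\ref{cortransverse2}, at every point of $\Per(\um_k)$ with only simple critical points the $(d-1)$ hypersurfaces $\Per_i(m_{k,i})$ are smooth and intersect transversally. Because the periods $m_{k,i}$ are pairwise distinct, no two critical points can lie on a common cycle, so the orbit-collision problem that required Theorem~\ref{thmcriticalrelations} in the preperiodic case is automatic here. The complementary degenerate locus $\bigcup_{i\neq j}\{c_i=c_j\}\cap \Per(\um_k)$ has cardinality $O(d^{|\um_k|-\min_i m_{k,i}})$ by Bezout applied to $\{c_i=c_j\} \cap \bigcap_{h\neq j}\Per_h(m_{k,h})$, exactly as in Lemma~\ref{lmdegenerate}. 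Stratifying $V = \reg(V) \cup \reg(\sing V) \cup \cdots$ and applying Lemma~\ref{lmtransversal} at each $q$-dimensional smooth point selects $q$ of the hypersurfaces $\Per_i(m_{k,i})$ whose intersection with the stratum is locally isolated and transverse; Bezout combined with $\deg \Per_i(m_{k,i}) = d^{m_{k,i}}$ (Lemma~\ref{lmdegree}) then yields at most $\deg(V)\, d^{\sum_{i\in I} m_{k,i}} \leq \deg(V)\, d^{|\um_k|-\min_i m_{k,i}}$ transverse intersection points per stratum, and the total is $o(d^{|\um_k|})$.

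For the lower bound I would combine Bezout with M\"obius inversion. By an elementary induction (compare Lemma~\ref{lmdegree}), the leading homogeneous part of $P^{m}_{c,a}(c_i) - c_i$ is proportional to $P_{c,a}(c_i)^{d^{m-1}}$, and Lemma~\ref{lminfinity} shows that the polynomials $P_{c,a}(c_i)$ have no common zero other than the origin. Hence the closures $\overline{\Per_i(m_{k,i})}\subset\p^{d-1}$ do not meet the hyperplane at infinity, so $\Per(\um_k)$ is a finite subset of $\A^{d-1}$ and Bezout with multiplicity gives $\sum_p m_p = d^{|\um_k|}$. Using the disjoint decomposition $\Per(\um_k) = \bigsqcup_{l_i | m_{k,i}} \Per^*(\underline{l})$, the set-theoretic Bezout bounds $\card(\Per(\underline{l})) \leq d^{|\underline{l}|}$, and the elementary estimate $\sum_{\underline{l} < \um_k} d^{|\underline{l}|} = O(d^{|\um_k|-\min_i m_{k,i}/2})$, one then deduces $\card(\Per^*(\um_k)) \geq (1 - o(1))\, d^{|\um_k|}$. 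Combining the two estimates gives Theorem~\ref{thm:ouf}.

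The main obstacle lies in the lower bound: multiplicities $m_p > 1$ in the Bezout count may occur precisely at degenerate parameters, and each excess multiplicity reduces the set-theoretic cardinality. A uniform bound on the total excess multiplicity is required, which should follow from the fact that the degenerate locus has cardinality $o(d^{|\um_k|})$ together with a local bound on intersection multiplicities of the $\Per_i(m_{k,i})$ at coincident-critical-point configurations. In contrast to the preperiodic case, where Theorem~\ref{thm:countSPCF} demanded a critical-portrait argument via Kiwi's theorem~\cite{kiwi-portrait}, here the distinct-period assumption forces distinct critical cycles and reduces the counting to a purely algebraic Bezout--M\"obius computation.
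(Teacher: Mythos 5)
Your upper bound is essentially the paper's argument: transversality from Theorem~\ref{cortransverse2} combined with Lemmas~\ref{lmtransversal} and~\ref{lmdegree} and a stratification of $V$. The detour through the degenerate locus is superfluous here: since the exact periods $m_{k,i}$ are pairwise distinct, no two critical points of a parameter in $\Per^*(\um_k)$ can coincide (they lie on cycles of different lengths), so every such parameter automatically has $d-1$ simple critical points; this is precisely how the paper disposes of the issue.

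The lower bound is where you diverge and where there is a genuine gap. You apply Bezout to the full hypersurfaces $\Per_i(m_{k,i})=\{P^{m_{k,i}}_{c,a}(c_i)=c_i\}$, obtaining $\sum_p m_p=d^{|\um_k|}$, and then try to subtract the contribution of parameters where some $c_i$ has period a proper divisor $l_i$ of $m_{k,i}$. But, as you yourself note, you only control the \emph{cardinality} of that locus (via $\sum_{\underline{l}}d^{|\underline{l}|}$), not its total Bezout multiplicity, and the required uniform bound on local multiplicities is not automatic: the multiplicity of $\{P^{rl}_{c,a}(c_i)=c_i\}$ at a parameter where $c_i$ has exact period $l$ could a priori grow with $r=m_{k,i}/l_i$, and no such bound is supplied. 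The paper avoids this entirely by working with the exact-period hypersurfaces: Lemma~\ref{lm:lwerdeg} performs the M\"obius inversion at the level of \emph{degrees of hypersurfaces}, giving $\deg(\Per^*_i(m))\ge(1-d^{-1})d^m$; one then observes that every point of $\bigcap_i\Per^*_i(m_{k,i})$ genuinely has all critical points periodic of exact periods $m_{k,i}$ (a periodic critical point lies on its own cycle, forcing multiplier $0$, so the parabolic degenerations that could enlarge the dynatomic locus cannot occur), whence Theorem~\ref{cortransverse2} gives transversality at \emph{every} intersection point and Bezout yields $\card(\Per^*(\um_k))\ge\prod_i\deg(\Per^*_i(m_{k,i}))\ge(1-d^{-1})^{d-1}d^{|\um_k|}$. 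To complete your route you would have to prove the uniform multiplicity bound you postulate; otherwise switch to the exact-period hypersurfaces as above.
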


\begin{proof}[Proof of Theorem~\ref{thm:ouf}] 
Since $m_{k,i} \neq m_{k,j}$ for all $i\neq  j$, any point in $\Per^*(\um_k)$ has $(d-1)$ critical points, and these critical points are necessarily simple. 
Theorem~\ref{cortransverse2} thus applies for each point in 
$\Per(\um_k)$. In particular, it applies to any point in $\Per^*(\um_k)$, and Lemmas~\ref{lmtransversal} and~\ref{lmdegree} show that 
$$
\card \left( V\cap \Per^*(\um_k) \right) \le
\sum_{|I| =q} \deg(V) d^{\sum_{i\in I} m_{k,i}}
$$
just as in the proof of Theorem~\ref{thmupperbound}.
To estimate  $\card (\Per^*(\um_k) )$ from below, we rely on 
\begin{lemma}
For all $m\ge1$, 
$$\deg(\Per^*_i(m))\ge  (1 - d^{-1}) \,d^m~.$$
\label{lm:lwerdeg}
\end{lemma}
By Theorem~\ref{cortransverse2}, the  $(d-1)$ hypersurfaces $\{ \Per^*_i(m_{k,i})\}_{0\le i \le d-2}$
intersect transversely at any point of the finite set $\Per^*(\um_k)$. According to Lemma \ref{lm:lwerdeg},
Bezout's Theorem gives 
$$\card(\Per^*(\um_k))\ge \prod_{i=0}^{d-2}\deg(\Per^*_i(m_{k,i}))\ge(1-d^{-1})^{d-1}d^{|\um_k|}~,$$
which ends the proof.
\end{proof}

\begin{proof}[Proof of Lemma~\ref{lm:lwerdeg}]
Since $\Per_i(m) = \sum_{l | m} \Per_i^*(l)$, the M\"obius inversion formula implies
$$
\deg(\Per_i^*(m) )= \sum_{l | m} \mu\left(\frac{m}{l}\right) \Per_i(l)~.
$$
It follows that
$$
\deg(\Per_i^*(m) ) \ge d^m - \sum_{l \le m/2} d^l \ge (1 - d^{-1}) \,d^m  ~,
$$
as required.
\end{proof}


\begin{proof}[Proof of Theorem \ref{maincrittm3}]
Observe that each hypersurface $\Per^*_i(m_{k,i})$ is defined over $\Q$, since $\Per_i(m) = \sum_{l | m} \Per_i^*(l)$. It follows that $\Per^*(\um_k)$ is $\textup{Gal}(\bar{\Q}/\Q)$-invariant. For any point $(c,a)$ in this set the critical points have a finite orbit, hence $G(c,a) =0$ and $h_\bif(c,a) =0$.
We may thus apply Theorem~\ref{tmyuan} to the adelic metrized bundle given by Theorem~\ref{thm:adelic}
and the set $Z'_k := \Per^*(\um_k)$.
This  sequence of finite sets is not generic but Theorem~\ref{thm:ouf} allows one to copy the proof we used 
for Theorem~\ref{maintm1} to conclude. 
\end{proof}

As a corollary, we can prove Theorem \ref{maintm3} in the case when $w_0=\cdots=w_{d-2}=0$.

\begin{corollary}\label{corcvPern}
For all $0\leq i\leq d-2$,  choose a sequence of integers $m_{k,i}$ s.t. $m_{k,i} \rightarrow \infty$ as $k\rightarrow\infty$.  Assume in addition that $m_{k,i}\neq m_{k,j}$ for all $k$ and all $i\neq j$.
Consider the probability measure $\mu''_k$
 that is uniformly distributed over all parameters $(c,a)\in \C^{d-1}$ 
admitting $(d-1)$ super-attracting periodic orbits of length $m_{k,1}, \ldots , m_{k,d-1}$ respectively.

Then the measures $\mu''_k$ converge in the weak sense to $\mu_\bif$ as $k\rightarrow\infty$.
\end{corollary}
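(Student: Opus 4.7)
The plan is to reduce Corollary~\ref{corcvPern} to Theorem~\ref{maincrittm3} by a direct bijection followed by a convex-combination argument. Write $S_k \subset \C^{d-1}$ for the support of $\mu''_k$, i.e.\ the set of $(c,a)$ for which $P_{c,a}$ admits $(d-1)$ super-attracting cycles of respective lengths $m_{k,0},\ldots,m_{k,d-2}$.

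The first step is a structural observation. Any super-attracting cycle of $P_{c,a}$ contains at least one critical point, since its multiplier is the product $\prod_j P_{c,a}'(z_j)$. As $P_{c,a}$ has exactly $d-1$ critical points counted with multiplicity, the existence of $(d-1)$ distinct super-attracting cycles forces $c_0,\ldots,c_{d-2}$ to be pairwise distinct and simple, each lying on exactly one of the cycles. Since the periods $m_{k,i}$ are pairwise distinct, for every $(c,a) \in S_k$ there is a unique permutation $\sigma = \sigma(c,a) \in \mathfrak{S}_{d-1}$ such that $c_i$ has exact period $m_{k,\sigma(i)}$ for each $i$.

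This produces the disjoint decomposition
$$
S_k \;=\; \bigsqcup_{\sigma \in \mathfrak{S}_{d-1}} \Per^*(\um_{k,\sigma}), \qquad \um_{k,\sigma} := (m_{k,\sigma(0)}, \ldots, m_{k,\sigma(d-2)}),
$$
and correspondingly
$$
\mu''_k \;=\; \sum_{\sigma \in \mathfrak{S}_{d-1}} \frac{\card\bigl(\Per^*(\um_{k,\sigma})\bigr)}{\card(S_k)}\, \mu'_{k,\sigma},
$$
where $\mu'_{k,\sigma}$ denotes the uniform probability measure on $\Per^*(\um_{k,\sigma})$. For each fixed $\sigma$ the permuted tuple $\um_{k,\sigma}$ still has pairwise distinct entries all tending to infinity, so Theorem~\ref{maincrittm3} applies and yields $\mu'_{k,\sigma} \to \mu_\bif$ weakly as $k \to \infty$. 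Since $\mathfrak{S}_{d-1}$ is finite and a finite convex combination of sequences of probability measures converging to a common limit converges to that same limit, we conclude $\mu''_k \to \mu_\bif$.

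The only non-routine ingredient is the multiplicity count showing that every $(c,a) \in S_k$ has simple, distinct critical points distributed one per super-attracting cycle; everything else is pure bookkeeping and a single application of Theorem~\ref{maincrittm3}. No new appeal to heights, Yuan's theorem, or transversality is required beyond what already went into Theorem~\ref{maincrittm3}.
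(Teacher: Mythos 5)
Your proof is correct and follows essentially the same route as the paper: decompose the support of $\mu''_k$ as a disjoint union over permutations $\sigma\in\mathfrak{S}_{d-1}$ of the sets $\Per^*(\um_{k,\sigma})$, apply Theorem~\ref{maincrittm3} to each, and pass to the convex combination. Your version is in fact slightly more careful than the paper's, which writes $(d-1)!\,\mu''_k=\sum_\sigma\mu_{\sigma,k}$ (implicitly using that all the $\Per^*(\um_{k,\sigma})$ have equal cardinality), whereas your argument with general convex weights avoids needing that equality.
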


\begin{proof}
For any permutation 
$\sigma\in\mathfrak{S}_{d-1}$, denote by $\mu_{\sigma,k}$ the probability measure equidistributed on $\bigcap_{j=0}^{d-2}\Per^*_{\sigma(j)}(m_{k,j})$.
We observe that the support of these measures are  disjoint for any two distinct permutations, and that 
$(d-1)!\, \mu''_k = \sum_{\sigma\in\mathfrak{S}_{d-1}}\mu_{\sigma,k}$. By Theorem~\ref{maincrittm3}, $\mu_{\sigma,k}\to \mu_\bif$ for any $\sigma$, hence $\mu''_k\to \mu_\bif$.
\end{proof}

\begin{remark}
Theorem~\ref{cortransverse2} shows that the cardinality of the support of $\mu''_k$ is equivalent to $(d-1)!\, d^{|\um_k|}$ when $k\to\infty$. 
\end{remark}

\subsection{The algebraic varieties $\Per^*(n,w)$.}\label{section:Per}

\par In this section, we explain how to parameterize the set of polynomials $P\in\poly_d$ possessing a cycle with a given multiplier and period following~\cite{Silverman} and \cite[\S 2.1]{BB2}.

\begin{theorem}
For any $n\geq1$ there exists a polynomial function $q_n:\poly_d\times\C\longrightarrow\C$ such that :
\begin{enumerate}
	\item For any $w\in \C\setminus\{1\}$, $q_n(P,w)=0$ if and only if $P$ has a cycle of exact period $n$ and of multiplier $w$;
	\item $q_n(P,1)=0$ if and only if $P$ has a cycle of period $n$ and multiplier $1$ or $P$ has a cycle of period $m$ and multiplier a $r$-th primitive root of unity with $n=mr$.
\end{enumerate}
\label{tmdefPern}
\end{theorem}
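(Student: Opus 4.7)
The plan is to follow the classical dynatomic-polynomial approach, as in Silverman's monograph on arithmetic dynamics and in~\cite{BB2}. First I would introduce the $n$-th dynatomic polynomial
$$
\Phi_n(P, z) := \prod_{m \mid n} (P^m(z) - z)^{\mu(n/m)},
$$
where $\mu$ is the M\"obius function. The divisibility pattern $P^m(z)-z \mid P^n(z)-z$ (for $m \mid n$) together with a M\"obius-inversion argument in $\C[z]$ show that $\Phi_n(P,z)$ is in fact a polynomial in $z$ whose coefficients are polynomials in the coefficients of $P$. Its roots are the points of \emph{formal period} $n$: every point of exact period $n$, together with ``exotic'' points of exact period $m < n$ (with $m \mid n$) whose multiplier $\zeta := (P^m)'(z_0)$ is a primitive $(n/m)$-th root of unity; at such an exotic root one computes $(P^n)'(z_0) = \zeta^{n/m}= 1$.

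Next I would introduce
$$
R_n(P, w) := \operatorname{Res}_z\bigl(\Phi_n(P, z),\, w - (P^n)'(z)\bigr) \in \C[\textup{coeffs}(P), w].
$$
Because $(P^n)'$ is constant along each $n$-cycle, each cycle $\mathcal{O}$ of formal period $n$ with multiplier $\lambda_\mathcal{O}$ contributes the factor $(w-\lambda_\mathcal{O})^n$ to $R_n$. Setting $q_n(P, w) := \prod_{\mathcal{O}} (w - \lambda_\mathcal{O})$, where the product ranges over all cycles of formal period $n$, properties (1) and (2) then follow at once from the description of the roots of $\Phi_n$ recalled above: $q_n(P,w) = 0$ iff some root $z_0$ of $\Phi_n(P,\cdot)$ satisfies $(P^n)'(z_0) = w$. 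For $w \neq 1$, the exotic roots are excluded by the identity $(P^n)'(z_0)=1$, so $z_0$ must have exact period $n$; for $w = 1$, both genuine parabolic $n$-cycles and exotic contributions from cycles of shorter period $m$ with multiplier a primitive $(n/m)$-th root of unity are allowed, which is exactly the content of~(2).

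The main obstacle is to show that $q_n$ is actually a \emph{polynomial} in the coefficients of $P$ (a priori it is only algebraic). I would argue that the identity $R_n = q_n^n$ holds globally in $\C[\textup{coeffs}(P), w]$: on the Zariski-dense open set where $\Phi_n(P,\cdot)$ has only simple roots and distinct cycles carry distinct multipliers, this identity follows directly from the factorization above. Since $\poly_d \simeq \C^*\times\C^d$ is normal and $R_n$ is monic in $w$ with unit leading coefficient (after normalizing $\Phi_n$ monically), a Gauss-lemma style argument in the UFD $\C[\textup{coeffs}(P), w]$ then shows that each irreducible factor of $R_n$ has multiplicity divisible by $n$, so the $n$-th root $q_n$ lies in the polynomial ring. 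Controlling the behaviour at the loci where cycles collide or where distinct cycles have coinciding multipliers is the most technical step, and is precisely where the careful analysis of the exotic roots of $\Phi_n$ from the first paragraph enters in full.
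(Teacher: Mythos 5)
Your proposal is correct and takes essentially the same route as the paper: the authors likewise form the dynatomic polynomial $\Phi_n^*$, build the multiplier polynomial $r_n(P,w)=\prod_j\bigl(w-(P^n)'(z_j)\bigr)$ over its roots (expressed via elementary symmetric functions rather than literally as a resultant, which is the same object), observe that constancy of the multiplier along each $n$-cycle makes every root of $r_n(P,\cdot)$ appear with multiplicity divisible by $n$, and define $q_n$ as the $n$-th root. Your Gauss-lemma/UFD justification for why that $n$-th root is again polynomial in the coefficients of $P$ is a reasonable way to make precise a step the paper passes over quickly, but it is not a different method.
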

\begin{proof}[Sketch of proof]
Define
$$
\Phi_n (P,z) \pe P^n(z) -z, \text{ and } \Phi^*_n (P,z) \pe \prod_{l|n} \Phi_l(P,z)^{\mu\left( \frac{n}{l}\right)}
~.$$
Then for all $P\in \poly_d$,  the roots of $\Phi_n^*(P,\cdot)$ are either simple roots at the $n$-periodic points of $P$, or  multiple roots at the periodic points of $P$ with exact period $m$ dividing $n$ and multiplier $w$ satisfying $w^r=1$ for $r=n/m\ge2$.

According to \cite[Theorem 2.3.4 and Proposition 2.3.5]{bsurvey}, see also~\cite[Chapter~4]{Silverman}, $\Phi^*_n$ is a polynomial function on $\poly_d \times \C$ such that $$\nu_d(n)\pe\deg_z(\Phi_n^*)\sim d^n, \text{ and } \mu_d(n)\pe\deg_P(\Phi_n^*)\sim (d-1)^{-1} d^n~.$$
The projection map $\pi: (\Phi_n^*)^{-1} (0) \to \poly_d$ is a proper ramified cover of degree $\nu_d(n)$. 
For any polynomial function $H: \poly_d \times \C \to \C$, and for any symmetric function $\sigma_i$ on 
$\nu_d(n)$ symbols, the following function
$T_i(H): \poly_d \to \C$ defined by $$T_i(H)(P) \pe \sigma_i ( H(P_{1}) , \ldots, H(P_{\nu_d(n)}))$$
is polynomial
where $\pi^{-1}(P) = \{ H(P_{1}) , \ldots, H(P_{\nu_d(n)}) \}$ possibly with repetitions.
This follows for instance from the next lemma, whose proof is left to the reader.
\begin{lemma}\label{lmresul}
Let $A(T) = a_0 + a_1 T + \ldots + a_k T^k$ and $B(T) = b_0 + b_1 T + \ldots + b_l T^l$ 
be two complex polynomials of degree $k$ and $l$ respectively. 
Denote by $\alpha_1, \ldots , \alpha_k$ the roots of $A$ possibly with repetitions. For any symmetric 
function $\sigma_i$ of degree $i$ on $k$ symbols, one can write
$\sigma_i (B(\alpha_1), \ldots , B(\alpha_k))$ as a polynomial in the coefficients of $B$ and in the $a_j/a_k$'s.
\end{lemma}
Define
$$
r_n(P,w)\pe \sum_{i=0}^{\nu_d(n)} T_i((P^n)')(-w)^{\nu_d(n)-i}~.
$$
For a fixed $w\in \C$, we have $r_n(P,w) =0$  if and only if there exists a point $(P,z) \in \{ \Phi_n^* = 0\}$ such that $(P^n)'(z) =w$. 
Since the  multiplier is constant on any point in the same periodic orbit,
it follows that for any fixed $P$, the polynomial $r_n(P,w)$ has all its root of multiplicity $n$.
Whence there exists a unique polynomial function $q_n: \poly_d \times \C \to \C$ such that
$$
q_n(P,w) ^n = r_n(P,w)~.
$$
Properties (1) and (2) now follow  from the definition.
\end{proof}

By setting $p_n(c,a,w)\pe q_n(P_{c,a},w)$ and using Lemma \ref{lm:lwerdeg}, we get
\begin{corollary}
For any $n\geq1$ there exists a polynomial function $p_n:\C^{d-1}\times\C\longrightarrow\C$ such that :
\begin{enumerate}
	\item For any $w\in \C\setminus\{1\}$, $p_n(a,c,w)=0$ if and only if $P_{c,a}$ has a cycle of exact period $n$ and of multiplier $w$;
	\item $p_n(a,c,1)=0$ if and only if $P_{c,a}$ has a cycle of period $n$ and multiplier $1$ or $P_{c,a}$ has a cycle of period $m$ and multiplier a $r$-th primitive root of unity with $n=mr$;
	\item $\bar{M}_d(n)\pe\deg_{(c,a)}p_n(\cdot,0)\ge d^{-1}(d-1)^2\, d^n$. 
\end{enumerate}
\label{cordefPern}
\end{corollary}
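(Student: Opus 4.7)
Setting $p_n(c,a,w) := q_n(P_{c,a},w)$ with $q_n$ furnished by Theorem~\ref{tmdefPern} yields a polynomial on $\C^{d-1} \times \C$, and properties (1) and (2) transfer immediately from the corresponding parts of Theorem~\ref{tmdefPern}. Only the lower bound (3) requires genuine work.

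For $0 \le i \le d-2$, let $F_{i,n}(c,a)$ denote the M\"obius dynatomic polynomial defining $\Per^*_i(n)$, characterized by the multiplicative identity $P^n_{c,a}(c_i) - c_i = \prod_{l \mid n} F_{i,l}(c,a)$. By Lemma~\ref{lmdegree} and the M\"obius inversion used in the proof of Lemma~\ref{lm:lwerdeg}, $\deg F_{i,n} \ge (1-d^{-1}) d^n$. My plan is to show that $\prod_{i=0}^{d-2} F_{i,n}$ divides $p_n(c,a,0)$ in $\C[c,a]$, from which
$$
\deg_{(c,a)} p_n(c,a,0) \ge \sum_{i=0}^{d-2} \deg F_{i,n} \ge (d-1)(1-d^{-1}) d^n = d^{-1}(d-1)^2 d^n
$$
will give (3). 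The set-theoretic statement $\{p_n(c,a,0) = 0\} = \bigcup_i \Per^*_i(n)$ is clear: $q_n(P,0)$ is, up to a power, the product over the exact period-$n$ cycles of $P$ of their multipliers, hence vanishes iff $P_{c,a}$ admits a super-attracting period-$n$ cycle, iff some $c_i$ is periodic of exact period $n$. To upgrade this to divisibility, I would match multiplicities at a smooth generic point of each irreducible component $V \subset \Per^*_i(n)$: the implicit function theorem applied to the equation $P^n_{c,a}(z) = z$ shows that the continuation $z(c,a)$ of the periodic point confluent with $c_i$ satisfies $z(c,a) - c_i \sim P^n_{c,a}(c_i) - c_i$ to leading order, so the factor $(P^n_{c,a})'(z(c,a))$ in the multiplier vanishes transversely to $V$ to the same order as $P^n_{c,a}(c_i) - c_i$, which equals the order of vanishing of $F_{i,n}$ along $V$ (the lower-period factors $F_{i,l}$ for $l \mid n$, $l < n$, being nonzero on $V$ by exactness of the period).

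The main obstacle is to establish that $F_{0,n}, \ldots, F_{d-2,n}$ are pairwise coprime in $\C[c,a]$, so that the sum of their degrees genuinely appears in $\deg p_n(c,a,0)$. If a common irreducible factor $h$ of $F_{i,n}$ and $F_{j,n}$ existed for some $i \ne j$, its zero locus $V$ would be a codimension-one subvariety of $\A^{d-1}_\C$ at whose generic point both $c_i$ and $c_j$ are periodic of exact period $n$. If $c_i$ and $c_j$ lie in disjoint super-attracting cycles, a transversality argument in the spirit of Theorem~\ref{cortransverse2} restricted to this pair of critical points will make the two periodicity equations $P^n_{c,a}(c_i) = c_i$ and $P^n_{c,a}(c_j) = c_j$ independent, forcing $V$ to have codimension at least two, a contradiction; if instead they lie in a common cycle, an additional critical orbit relation $c_j = P^k_{c,a}(c_i)$ with $0 < k < n$ must hold, again independent from the periodicity condition and again forcing codimension at least two.
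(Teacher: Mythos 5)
Your definition of $p_n$ and the transfer of (1)--(2) match the paper. For (3), however, your argument has a gap at its central step: the pairwise coprimality of the dynatomic polynomials $F_{i,n}$. You propose to rule out a common irreducible factor of $F_{i,n}$ and $F_{j,n}$ by ``a transversality argument in the spirit of Theorem~\ref{cortransverse2} restricted to this pair of critical points''. But Theorem~\ref{cortransverse2} and its source (Corollary~\ref{corAdam2}, via Theorem~\ref{tmAdam2} and Proposition~\ref{propguided}) are proved only for polynomials \emph{all} of whose critical points are periodic: the operator $\nabla_P$ acts on $\mathcal{Q}(\mathcal{P}(P))$, which requires $\mathcal{P}(P)$ to be finite. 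At a generic point of your putative common component $V$ only $c_i$ and $c_j$ are constrained; the remaining $d-3$ critical points may well have infinite orbits, so the map need not be postcritically finite and none of the transversality results of Section~\ref{sec:trans} applies. Transversality for a proper subset of critical orbit relations is a genuinely harder statement (it requires Epstein's full deformation-space machinery, which the paper deliberately does not develop), so this step cannot be waved through; the same objection applies to your ``additional critical orbit relation'' case.

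The paper avoids transversality entirely here. Since $\Per^*_l(n)\subset\mathcal{B}_l=\{g_{c,a}(c_l)=0\}$, Proposition~\ref{propBH} shows that the projective closure of each $\Per^*_l(n)$ meets $H_\infty$ inside $\{P_{c,a}(c_l)=0\}$, and Lemma~\ref{lminfinity} says these loci have no common point at infinity; hence $\bigcap_l\overline{\Per^*_l(n)}$ is a projective set contained in the bounded connectedness locus, therefore finite. If $\Per^*_i(n)$ and $\Per^*_j(n)$ shared a $(d-2)$-dimensional component, intersecting it with the remaining $d-3$ hypersurfaces in $\p^{d-1}$ would leave a positive-dimensional piece in that full intersection, a contradiction. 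This soft compactness argument yields exactly the proper (pairwise) intersection you need, with no dynamical input. On the positive side, your multiplicity-matching step (comparing $\ord_V$ of the multiplier with $\ord_V(P^n_{c,a}(c_i)-c_i)$ via the implicit function theorem) is sound, and is in fact more careful than the paper, which passes from the set-theoretic identity $\{p_n(\cdot,0)=0\}=\bigcup_l\Per^*_l(n)$ to the degree inequality without discussing possible multiplicities of the dynatomic divisors.
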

\begin{proof}
It only remains to prove $(3)$. Since $\{p_n(\cdot,0)=0\}=\bigcup_j\Per^*_j(n)$ and since, by Lemma \ref{lminfinity} and  Proposition \ref{propBH}, the $\Per^*_j(n)$ intersect properly, we see that $\deg(p_n(\cdot,0))\ge \sum_j\deg(\Per^*_j(n))$ and Lemma \ref{lm:lwerdeg} ends the proof.
\end{proof}

For $n\geq1$ and $w\in\C$ we set
$$\Per^*(n,w)\pe\{(c,a)\in\C^{d-1} , \, p_n(c,a,w)=0\}~.$$

When $\uw\pe(w_0,\ldots,w_{d-2})\in\D^{d-1}$ and $\um=(m_0,\ldots,m_{d-2})$ satisfy $m_j\neq m_i$ for all $i\neq j$, any parameter in the intersection of the hypersurfaces $\Per^*(m_j,w_j)$ has all its critical points in the filled-in Julia set. The intersection $\bigcap_{0\le j\leq d-2}\Per^*(m_j,w_j)$ is thus a compact algebraic set, i.e. a finite set we denote by
$$
\Per^*(\um,\uw)\pe\bigcap_{j=0}^{d-2}\Per^*(m_j,w_j)~.
$$
\begin{remark}
Observe that  $\Per^*(\um) \neq \Per^*(\um,\underline{0})$. The set $\Per^*(\um)$ is the set of polynomials for which the critical point $c_i$ is periodic of period exactly $m_i$ for each $0\le i \le d-2$, whereas $\Per^*(\um,\underline{0})$ is the set of polynomials for which for each $0\le i \le d-2$ there exists a critical point $c_{j_i}$ that is periodic of period exactly equal to $m_i$.
\end{remark}


\subsection{Parameterizing hyperbolic components with $(d-1)$ attracting cycles.}

Pick $n_0,\ldots,n_{d-2} \in \N^*$ such that $n_i\neq n_j$ if $i\neq j$, and consider a connected component $\mathcal{H}\subset\C^{d-1}$ of the set of $(c,a) \in \C^{d-1}$ such that $P_{c,a}$ admits $(d-1)$ distinct attracting periodic orbits of  exact periods $n_0, \ldots , n_{d-2}$ respectively.
Recall that $\mathcal{H}$ is open. 

Observe that all critical points of $P_{c,a} \in \mathcal{H}$ are attracted to one and only one attracting orbit, and thus are simple. For each $i$, we let $w_i(c,a) \in \D$ be the multiplier of the attracting periodic orbit that 
has exact period $n_i$. In this way we get a holomorphic map
$$\mathcal{W}(c,a)\pe (w_0(c,a),\ldots, w_{d-2}(c,a))~.$$

Following closely\cite[\S~2]{BB3}, we shall prove 
\begin{theorem}
The map $\mathcal{W}:\mathcal{H}\longrightarrow\D^{d-1}$ is a biholomorphism.
\label{tm:parametrize}
\end{theorem}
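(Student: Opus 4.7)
The plan is to show $\mathcal{W}$ is a proper holomorphic map between the connected equidimensional complex manifolds $\mathcal{H}$ and $\D^{d-1}$, of degree one, and hence a biholomorphism.

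\emph{Holomorphy.} For each $0\le i \le d-2$ and each $(c_0,a_0)\in\mathcal{H}$, any point of the attracting cycle of period $n_i$ is a simple zero of $P_{c_0,a_0}^{n_i}(z)-z$ since its multiplier has modulus strictly less than $1$. The implicit function theorem yields a holomorphic continuation $(c,a)\mapsto z_i(c,a)$ defined on all of $\mathcal{H}$, and $w_i(c,a)=(P_{c,a}^{n_i})'(z_i(c,a))$ is then holomorphic.

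\emph{Properness.} All critical orbits of $P_{c,a}$ for $(c,a)\in\mathcal{H}$ are bounded (they lie in attracting basins), so $\mathcal{H}$ is contained in the bounded connectedness locus by Proposition~\ref{propBH}. Given $(c_k,a_k)\in \mathcal{H}$ with $\mathcal{W}(c_k,a_k)\to \underline{w}\in\D^{d-1}$, one extracts a subsequence with $(c_k,a_k)\to (c_\infty,a_\infty)\in\overline{\mathcal{H}}$. The multipliers $w_i(c_k,a_k)$ stay uniformly bounded away from $\partial\D$, so the attracting cycles persist at the limit by the implicit function theorem; being of pairwise distinct periods $n_i$ they remain distinct, and each still attracts a critical point by Fatou's theorem, so $(c_\infty,a_\infty)\in \mathcal{H}$.

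\emph{Regularity at a center.} Being proper and non-constant between equidimensional connected manifolds, $\mathcal{W}$ is open and surjective, so there exists a super-attracting center $P_*\in \mathcal{W}^{-1}(\underline{0})$. At such a point every attracting cycle contains a critical point, so for some permutation $\sigma\in\mathfrak{S}_{d-1}$ we have $P_*^{n_{\sigma(i)}}(c_i)=c_i$ for all $i$. Corollary~\ref{corAdam2} ensures that the $(d-1)$ local hypersurfaces $\{Q^{n_{\sigma(i)}}(c_i)=c_i\}$ are smooth and transverse at $P_*$, and a chain-rule computation identifies them locally with $\{w_{\sigma(i)}=0\}$. Hence $D_{P_*}\mathcal{W}$ is invertible, $\underline{0}$ is a regular value, and the degree of $\mathcal{W}$ as a branched cover equals $\card \mathcal{W}^{-1}(\underline{0})$.

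\emph{Main obstacle.} The crucial remaining step is to prove $\card \mathcal{W}^{-1}(\underline{0})=1$, i.e., the super-attracting center inside $\mathcal{H}$ is unique. Two such centers share the same pairing $\sigma$ between critical points and attracting cycles (since $\sigma$ is locally constant, hence constant, on the connected set $\mathcal{H}$), and thus share the same combinatorial critical portrait; by the classification of postcritically finite polynomials via critical portraits~\cite{BFH,kiwi-portrait} they must be affinely conjugate, and combined with the finite-to-one projection $\pi:\C^{d-1}\to\mathcal{P}_d$ and connectedness of $\mathcal{H}$ this forces them to coincide in $\C^{d-1}$. Once $\card \mathcal{W}^{-1}(\underline{0})=1$, the proper holomorphic map $\mathcal{W}$ has degree one and is therefore a biholomorphism.
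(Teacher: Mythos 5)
Your reduction to ``proper holomorphic map of degree one'' is sound in outline, and your properness argument is essentially the paper's (the paper makes the persistence of the attracting cycles quantitative via a uniform contraction lemma, but your IFT argument, using that the multiplier $\neq 1$ makes each cycle point a simple zero of $P^{n_i}(z)-z$ so that the exact period cannot collapse in the limit, can be made to work). The problem is the step you yourself flag as the main obstacle: $\card\,\mathcal{W}^{-1}(\underline{0})=1$. Your argument for it does not hold up. Two centers of the same component share the data ``$c_{j_i}$ is periodic of exact period $n_{\sigma(i)}$'', but this is far weaker than sharing a critical portrait in the sense of \cite{Fisher,BFH,kiwi-portrait}: a critical portrait records the external angles landing at the critical points, and there are in general on the order of $d^{|\un|}$ distinct affine conjugacy classes of centers realizing the same period data (this is exactly what Theorem~\ref{thm:countSPCF} exploits). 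Nothing in your argument uses that the two centers lie in the \emph{same} component to force their portraits to agree, so the appeal to the Bielefeld--Fisher--Hubbard/Kiwi classification proves nothing here. In fact, ``each hyperbolic component has a unique center'' is essentially equivalent to the statement being proved, so assuming it is circular. (The final step, passing from affine conjugacy to equality in $\C^{d-1}$ via ``connectedness of $\mathcal{H}$'', is also only gestured at.)

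The paper avoids counting preimages of $\underline{0}$ altogether: it proves that $\mathcal{W}$ is a \emph{local} homeomorphism at \emph{every} point of $\mathcal{H}$, by quasiconformal surgery. Given $(c_0,a_0)\in\mathcal{H}$, one deforms the Blaschke-product model of $P^{n_i}$ on each immediate basin to prescribe arbitrary multipliers $\lambda\in\D(0,1-\e)^{d-1}$, straightens, and renormalizes into the family $P_{c,a}$; this produces a \emph{continuous} local section $\sigma$ with $\mathcal{W}\circ\sigma=\id$, and Lemma~\ref{lm:genhol} upgrades this to local invertibility of the holomorphic map $\mathcal{W}$. A proper local homeomorphism onto the simply connected $\D^{d-1}$ is then a one-sheeted covering, hence a biholomorphism. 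If you want to keep your degree-theoretic framing, you still need an input of this kind (surgery, or Epstein/Levin-type transversality giving injectivity of the multiplier map, or the Douady--Hubbard theory of centers) to pin the degree down to one; the transversality Corollary~\ref{corAdam2} only gives you that $\underline{0}$ is a regular value, not that its fiber is a single point.
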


\begin{proof}
Since $\D^{d-1}$ is simply connected it is sufficient to prove that $\mathcal{W}$ is proper and locally invertible.

We first prove that $\mathcal{W}$ is proper by contradiction. Suppose there exists a sequence $(c_k,a_k)\in\mathcal{H}$ converging to $\partial\mathcal{H}$ such that 
$$\mathcal{W}(c_k,a_k)=(w_0(c_k,a_k),\ldots,w_{d-2}(c_k,a_k))\to (w_{0,\infty},\ldots,w_{d-2,\infty})\in\D^{d-1}~.$$
 Since $\mathcal{H}$ is bounded, we may assume that $(c_k,a_k)\to(c_\infty,a_\infty)$. 
\begin{lemma}
Suppose $P(z) = w z + \sum_{2 \le i \le D} a_i z^i$ is a polynomial map fixing the origin with $|w| <1$. Then for any $ r\le \frac{ |w|^{1/2} - |w|}{D \max \{ |a_i|\}}$, one has
$$
P (\D(0,r)) \subset \D( 0, \sqrt{|w|}\, r)~.
$$
\end{lemma}
\begin{proof}
If $M = \max \{ |a_i|\}$, then $|P(z) - wz | \le D\, M |z|^2$ for any $|z| \le 1$.
For any $|z| \le r$, we get
$|P(z) | \le |w| r + D\, M r^2\le |w|^{1/2} r$
as soon as $|w| + DM r \le |w|^{1/2}$.
\end{proof}
For each $0\le i \le d-2$,  pick $x_{i,k}$ a point in the attracting periodic orbit for $P_{c_k,a_k}$ which period is $n_i$. For each $k$, the orbit of $x_{i,k}$ attracts a unique critical point $c_{i,k}\pe c_{j_{i,k}}(P_{c_k,a_k})$.  Extracting further, we may assume that $j_{i,k}\equiv j_i$ doesn't depend on $k$. Since the period is fixed equal to $n_i$ for all $k$, the preceding lemma implies the existence of a fixed radius $r >0$ and a fixed positive constant $\e>0$ such that $P_{c_k,a_k}^{n_i}( \D(x_{i,k},r)) \subset  \D(x_{i,k}, (1-\e) r)$ for all $k$.
Since the cycles $\{P_{c_k,a_k}^m(x_{i,k})\}_{m\ge 0}$ and $\{P_{c_k,a_k}^m(x_{j,k})\}_{m\ge 0}$ are disjoint, it follows that 
$\min_{m,m'\ge 0, i\neq j} |P^m_{c_k,a_k}(x_{i,k}) - P^{m'}_{c_k,a_k}(x_{j,k})| \ge r$.

Extracting further if necessary we may also assume that $x_{i,k}$ is converging to a  periodic point $x_{i,\infty}$ of $P_{c_\infty,a_\infty}$. The previous estimate shows that $x_{i,\infty}$ is attracting, and further that  these $(d-1)$ cycles are  distinct. We conclude that $P_{c_\infty,a_\infty}$ is hyperbolic. Since the space of hyperbolic maps is open,
this would imply $P_{c_\infty,a_\infty}\in \mathcal{H}$,  which is a contradiction.

\medskip

Next we show that $\mathcal{W}$ is locally invertible.
Choose any base point $(c_0,a_0 ) \in \mathcal{H}$, and
pick any $\e>0$ small enough such that $\mathcal{W}(c_0,a_0) = (w_0, \ldots , w_{d-2})$ lies in the open polydisk of center $0$ and radius $1 -\e$. We shall first construct a continuous map $$\sigma : \D (0, 1 - \e) ^{d-1} \to \mathcal{H}$$ such that $ \mathcal{W} \circ \sigma = \id$ using quasi-conformal surgery. We only sketch the construction referring to~\cite[Theorem~VIII.2.1]{carleson} for detail.

The polynomial  $P_{c_0,a_0}$ has $(d-1)$ distinct attracting cycles. 
We let $U_{1,i}, \ldots , U_{n_i,i}$ be the immediate basin of attraction of these cycles
indexed by $1\le i \le d-1$ such that the unique critical point $c_{j_i}$
attracted to this cycle belongs to $U_{1,i}$.
Since it is a simple critical point, 
there exists a conformal map $\varphi : U_{1,i} \to \D$
such that 
$$\varphi \circ  P^{n_i} \circ \varphi^{-1} (\zeta) = \zeta\, \frac{\zeta +w_i }{ 1 + \bar{w_i} \zeta}, \text{ for any } |\zeta| < 1~.$$
For any $\lambda = (\lambda_0, \ldots , \lambda_{d-2}) \in \D (0, 1 - \e) ^{d-1}$, we construct a smooth
map $\tilde{P}_\lambda$ by setting 
$\tilde{P}_\lambda\pe P_{c_0,a_0}$
outside the union of all $U_{j,i}$, 
and such that 
$\varphi \circ  P^{n_i}_\lambda \circ \varphi^{-1} (\zeta) = \zeta\, \frac{\zeta +\lambda_i }{ 1 + \bar{\lambda_i} \zeta}$ on a fixed disk $|\zeta| < 1 -r$ containing the critical point of  the latter Blashke product. Details of the construction can be found in op. cit. 
In particular, one can see that $\tilde{P}_\lambda$ depends continuously on the parameter. 

We now solve the Beltrami equation for the unique Beltrami form which is $0$ on the complement  the $U_{j,i}$'s and invariant under $\tilde{P}_\lambda$. In this way we get a quasiconformal homeo\-morphism $\psi_\lambda : \C \to \C$ such that $\psi_\lambda (z) = z + o(1)$ at infinity, and $P_\lambda \pe \psi_\lambda \circ \tilde{P}_\lambda \circ \psi_\lambda^{-1}$ is holomorphic. At infinity, we see that 
$P_\lambda (z) = \frac1d z^d + O(z^{d-1})$. 

At this point we have constructed a continuous map
$\D (0, 1 - \e) ^{d-1} \to \poly_d$, $\lambda \mapsto P_\lambda$ such that $P_{\uw} = P_{c_0,a_0}$
and $P_\lambda$ admits $(d-1)$ attracting periodic cycles of exact period $m_i$ and multiplier $\lambda_i$ respectively. 

\medskip

Let us now prove that $\mathcal{W}$ is locally invertible in a neighborhood of $(c_0,a_0)$. 
Since $0$ is a simple critical point, we may find a holomorphic map $c : U \to \C$
defined on an open set $U\subset \poly_d$ containing $(c_0,a_0)$ and satisfying
$c( c_0,a_0) = 0$ and $P'(c(P)) =0$ for all $P\in U$.
We then set $\sigma(\lambda) \pe P_\lambda ( \cdot + c(P_\lambda) ) - c(P_\lambda)$.
In this way we get a polynomial of degree $d$ with dominant term equal to $\frac1d$ and
having $0$ as a critical point. Since any such polynomial is equal to some $P_{c,a}$ for a unique  $(c,a) \in \C^{d-1}$ we thus get a continuous map defined in a neighborhood of $(c_0,a_0)$ and such that $\mathcal{W} \circ \sigma = \id$.
The next lemma applied to $\phi \pe \mathcal{W}$ and $\varphi\pe \sigma$ implies $\mathcal{W}$ to be locally invertible at $(c_0,a_0)$ as required.
\end{proof}

\begin{lemma}\label{lm:genhol}
Let $\phi : (\C^n,0)  \to (\C^n,0)$ be a germ of holomorphic  map such that 
$\phi^{-1}(0) = (0)$. Suppose that there exists a germ of 
continuous map $\varphi:  (\C^n,0)  \to (\C^n,0)$ such that $\phi \circ \varphi = \id$.

Then  $\varphi$ is holomorphic and $\phi$ is locally invertible at $0$.
\end{lemma}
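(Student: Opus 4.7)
The strategy is to exploit the identity $\phi\circ\varphi=\id$ in two stages: first, purely topological considerations will force $\phi$ to be a local homeomorphism at $0$; then a classical fact from several complex variables will upgrade this to a biholomorphism.

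To begin with, the relation $\phi\circ\varphi=\id$ automatically makes $\varphi$ injective, since $\varphi(u)=\varphi(u')$ entails $u=\phi(\varphi(u))=\phi(\varphi(u'))=u'$. Moreover $\varphi(0)=0$, because $\phi(\varphi(0))=0$ together with the hypothesis $\phi^{-1}(0)=\{0\}$ leaves no other choice. Fix then a small open neighborhood $U$ of the origin on which $\varphi$ is defined and continuous. Since $\varphi:U\to\C^n$ is a continuous injection between open subsets of $\R^{2n}$, Brouwer's invariance of domain theorem guarantees that $V:=\varphi(U)$ is an open neighborhood of $0$ and that $\varphi:U\to V$ is a homeomorphism, whose inverse is precisely the restriction $\phi|_V$. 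In particular, $\phi$ restricts to a homeomorphism between two open neighborhoods of the origin in $\C^n$.

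It remains to upgrade this topological statement to a holomorphic one. The map $\phi|_V:V\to U$ is now an injective holomorphic map between open subsets of $\C^n$. I would invoke the classical fact that any such injective holomorphic map has nowhere vanishing Jacobian, so that its set-theoretic inverse is automatically holomorphic; this is a standard result found, e.g., in any textbook on several complex variables, and can be proved by induction on $n$ starting from the one-variable statement (an injective holomorphic function on a planar domain has nonvanishing derivative). Applying this statement to $\phi|_V$ shows simultaneously that $\phi$ is locally invertible at $0$ and that $\varphi=(\phi|_V)^{-1}$ is holomorphic, completing the proof.

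The hypothesis $\phi^{-1}(0)=\{0\}$ enters only to pin down $\varphi(0)=0$, so that $V$ is a neighborhood of the origin rather than of some other point; the heart of the argument is the pairing of invariance of domain with the Osgood-type theorem on injective holomorphic maps. No substantial obstacle is anticipated beyond citing these two classical facts: invariance of domain does the topological work, and the several-complex-variables statement about injective holomorphic maps handles the promotion to a biholomorphism.
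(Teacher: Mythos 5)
Your proof is correct, but it follows a genuinely different route from the paper. The paper exploits the hypothesis $\phi^{-1}(0)=(0)$ in an essential way: it makes $\phi$ a finite branched cover near the origin, so that off the hypersurface $H$ of critical values the continuous section $\varphi$ must coincide locally with a holomorphic branch of $\phi^{-1}$ (inverse function theorem); continuity plus the Riemann extension theorem then propagates holomorphy of $\varphi$ across $H$, and differentiating $\phi\circ\varphi=\id$ at the origin yields invertibility of $D_0\phi$. You instead argue topologically first: $\varphi$ is injective because it has a left inverse, so invariance of domain makes $V=\varphi(U)$ an open neighborhood of $0$ and exhibits $\phi|_V$ as a continuous inverse of $\varphi$; in particular $\phi|_V$ is an \emph{injective} holomorphic map, and Osgood's theorem (an injective holomorphic map between domains in $\C^n$ has nonvanishing Jacobian and is a biholomorphism onto its open image) finishes everything at once. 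Your version is shorter, skips the branched-cover and removable-singularity steps, and in fact uses $\phi^{-1}(0)=(0)$ only to pin down $\varphi(0)=0$ (which is already encoded in the germ notation), so it proves a marginally more general statement; the price is that Osgood's injectivity theorem is itself a nontrivial classical input, whereas the paper gets by with the inverse function theorem and Riemann extension. Both arguments are complete; just make sure to cite a precise reference for the Osgood-type theorem rather than only sketching its inductive proof.
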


\begin{proof}[Proof of Lemma~\ref{lm:genhol}]
Since $\phi^{-1}(0) = (0)$ there exist open sets $U,V$ containing $0$ such that 
$\phi : U \to V$ is a finite branched cover, and $\varphi$ is defined over $V$. 
The critical values of $\phi$ define a hypersurface $H \subset V$. The holomorphic
inverse mapping theorem shows that $\varphi$ is holomorphic at any point in $V\setminus H$.
Since it is continuous, it extends holomorphically through $H$. The differential of $\phi$ and $\varphi$ are thus both invertible at $0$ and the result follows.
\end{proof}

\begin{remark}
We could also have used transversality arguments of Epstein \cite{epstein2} to get the local invertibility of the map $\mathcal{W}$ (see also Levin~\cite{Levin2}).
This alternative approach actually proves that 
the map $\mathcal{W}$ extends as a homeomorphism $\overline{\mathcal{W}}:\overline{\mathcal{H}}\longrightarrow\overline{\D}^{d-1}$. 
\end{remark}

\subsection{Proof of Theorem~\ref{maintm3}.}

Let now pick any $\uw\pe(w_0,\ldots,w_{d-2})\in\D^{d-1}$ and any sequence $\um_k$ of $(d-1)$-tuples with $m_{k,i}\to\infty$ for all $i$ and $m_{k,i}\neq m_{k,j}$ for all $i\neq j$. We want to prove
$ \mu''_k \longrightarrow\mu_\bif$,
where $\mu''_k$ is the probability measure that is uniformly distributed over the set $\Per^*(\um_k,\uw)$.
We write $\uw[0]\pe (0,\ldots,0)$ and for any $1\le j\le d-1$, we set
$$\uw[j]\pe (w_0,\ldots,w_{j-1},0,\ldots,0)~.$$
We denote by $\mu_{k,j}$ the measure that is uniformly distributed on $\Per^*(\um_k,\uw[j])$.
By Corollary \ref{corcvPern}, we know that $\mu_{k,0}\to\mu_\bif$. 
To conclude it is thus sufficient to  prove that for any $0\le j\le d-2$, we have  
\begin{eqnarray}
\mu_{k,j+1}-\mu_{k,j}\longrightarrow 0~.
\label{cvdiff}
\end{eqnarray}
Let us now fix $0\le j\le d-2$. If $w_j=0$, we have $\mu_{k,j+1}=\mu_{k,j}$ and the proof is finished. We thus assume that $w_j\in\D\setminus\{0\}$. For any $k$, we consider the algebraic subvariety 
$$C_{k,j} \pe\bigcap_{h< j}\Per^*(m_{k,h},w_h)\cap\bigcap_{l>j}\Per^*(m_{k,l},0)~.$$
Observe that  $C_{k,j} \cap  \Per^*(m_{k,j},0)$ is finite, hence $C_{k,j}$ is an algebraic curve.
Observe also that the $(d-1)$ hypersurfaces $\Per^*(m_{k,h},w_h)$ for $0\le h \le j-1$ (resp. $0\le h\le j$) and $\Per^*(m_{k,i},0)$ otherwise intersect transversally. Indeed any point in the intersection belongs to a hyperbolic component $\mathcal{H}$ for which Theorem~\ref{tm:parametrize} applies. The transversality statement then follows since the images of the hypersurfaces $\Per^*(m_{k,h},w_h)$ and  $\Per^*(m_{k,i},0)$ under $\mathcal{W}$ are coordinate hyperplanes.

Pick any point $(c,a) \in \Per^*(\um_k,\uw[j])$, and let $\mathcal{H}$ be the hyperbolic component containing $(c,a)$. Using Theorem~\ref{tm:parametrize}, we define $\phi_{c,a} : \D(0, |w_j|^{-1/2}) \to \mathcal{H}$ by setting 
$$\phi_{c,a}(t) \pe \mathcal{W}^{-1} (w_0, \ldots , w_{j-1}, t w_j , 0, \ldots 0) ~.$$
By construction, the disk $\D_{c,a} \pe \phi_{c,a}( \D(0, |w_j|^{-1/2}))$ is included in $\mathcal{H} \cap C_{k,j}$, 
$\phi_{c,a}(0) = (c,a)$ and $\phi_{c,a} (1)$ belongs to $\Per^*(\um_k,\uw[j+1])$.
Any hyperbolic component contains a unique point in $\Per^*(\um_k,\uw[j])$, hence
the collection of disks $\D_{c,a}$ is disjoint.
Note also  that  any point in $\Per^*(\um_k,\uw[j+1])$ belongs to a hyperbolic component, and thus
is equal to  $\phi_{c,a} (1)$ for a unique $(c,a) \in \Per^*(\um_k,\uw[j])$.

~

We conclude from these two discussions that one can count precisely the cardinality of the set $\Per^*(\um_k,\uw[j])$ for all $j$. First remark that we have proved
$$\card(\Per^*(\um_k,\uw[j+1]))=\card(\Per^*(\um_k,\uw[j])).$$
Using Bezout's theorem, Corollary~\ref{cordefPern} and an induction on $j$ we find
$$\card(\Per^*(\um_k,\uw[j+1])) = \card(\Per^*(\um_k,\uw[0])) = \prod_{l}\bar{M}_d(m_{k,l})~.$$ 
Since $\deg(\Per^*(m_{k,j},0))=\bar{M}_d(m_{k,j})$ and $C_{k,j}\cap \Per^*(m_{k,j},0) = \Per^*(\um_k,\uw[j])$ this gives $\deg (C_{k,j}) = \prod_{l\neq j}\bar{M}_d(m_{k,l})$ from which we infer
\begin{equation}\label{eqBD}
\frac{\card(\Per^*(\um_k,\uw[j+1]))}{\deg (C_{k,j})} = \bar{M}_d(m_{k,j})^{-1} \le d(d-1)^{-2}\, d^{-m_{k,j}}\to 0~.
\end{equation}

We fix any k\"ahler form $\om$ on $\p^{d-1}$ normalized so that $\int\om^{d-1}=1$. Recall that the area of any holomorphic curve $C\subset \p^{d-1}$ is defined by 
$\area ( C) \pe \int_C \om$ which is equal to $\deg(C)$ when $C$ is algebraic. We thus have
\begin{eqnarray}
 \sum_{(c,a) \in \Per^*(\um_k,\uw[j]) }\area (\D_{c,a}) 
\le \deg (C_{k,j})~.
 \label{ineg:volume}
\end{eqnarray}
Choose $\varepsilon>0$  sufficiently small. 
Denote by $\mathcal{B}_k$ the set of $(c,a) \in \Per^*(\um_k,\uw[j])$ such that 
$\area ( \D_{c,a}) \le \e^2$. 
Then by~\eqref{eqBD} there exists $$ \card ( \mathcal{B}_k) \ge \card (\Per^*(\um_k,\uw[j])) - \frac{\deg (C_{k,j})}{\e^2} 
\ge ( 1- \e) \card (\Per^*(\um_k,\uw[j]))
$$
if $k$ is large enough.
We now rely on the following length-area estimate.
\begin{lemma}[\cite{briendduval2}]
There exists $c>0$, such that for any holomorphic disks $D_1\Subset D_2\subset\p^{d-1}_\C$, 
$$
\left(\textup{diam}(D_1)\right)^2\le c\cdot \frac{\area(D_2)}{\min(1,\textup{mod}(A))}~,
$$
where $A$ is the annulus $D_2\setminus D_1$.
\label{lm:BD}
\end{lemma}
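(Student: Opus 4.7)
My plan is a classical conformal length--area argument. After uniformizing the annular region $A$, I may assume $D_2 = \phi(\D(0,r_2))$ and $D_1 = \phi(\D(0,r_1))$ for a holomorphic map $\phi : \D(0,r_2) \to \p^{d-1}_\C$, with $\textup{mod}(A) = \frac{1}{2\pi}\log(r_2/r_1)$. For each $t \in (r_1,r_2)$, let $L(t)$ denote the Fubini--Study length of the loop $\gamma_t \pe \phi(\{|z|=t\})$. Cauchy--Schwarz gives
\[
L(t)^2 \le 2\pi t \int_0^{2\pi} \|\phi'(te^{i\theta})\|_{FS}^2 \, t \, d\theta.
\]
Dividing by $t$ and integrating from $r_1$ to $r_2$ turns the right-hand side into $2\pi\,\area(\phi(A)) \le 2\pi\,\area(D_2)$, while the left-hand side is bounded below by $(\min_t L(t))^2 \int_{r_1}^{r_2} dt/t = 2\pi (\min_t L(t))^2\,\textup{mod}(A)$. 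There is therefore some $t^\ast \in (r_1,r_2)$ with $L(t^\ast)^2 \le \area(D_2)/\textup{mod}(A)$.

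The remaining task is to pass from the length bound on $\gamma_{t^\ast}$ to a diameter bound on $D_1$. When $\textup{diam}(D_1)$ is of order $\textup{diam}(\p^{d-1}_\C)$ the lemma holds trivially for $c$ large, so I may assume $D_2$ is contained in a single affine chart of $\p^{d-1}_\C$. In such a chart the components of $\phi$ are ordinary holomorphic functions; the coordinate-wise maximum principle gives that the Euclidean diameter of $\phi(\overline{\D(0,t^\ast)})$ coincides with the Euclidean diameter of $\gamma_{t^\ast}$, which is at most $L(t^\ast)/2$ since any two points of a closed loop of length $\ell$ are joined by a sub-arc of length $\le \ell/2$. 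Comparability of the Euclidean and Fubini--Study metrics on the chart, combined with $D_1 \subset \phi(\D(0,t^\ast))$, yields $\textup{diam}(D_1)^2 \le C\cdot L(t^\ast)^2 \le C'\,\area(D_2)/\textup{mod}(A)$. Since $1/\textup{mod}(A) \le 1/\min(1,\textup{mod}(A))$, this gives the statement of the lemma.

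The main obstacle is exporting the length-to-diameter step from Euclidean to projective geometry: Jordan curves in $\p^{d-1}_\C$ do not globally separate the ambient space, so the maximum principle cannot be applied ambiently. The case distinction above reduces the problem to a chart-local configuration, after which the estimate is standard.
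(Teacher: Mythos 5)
The length--area half of your argument is correct: uniformizing and applying Cauchy--Schwarz to $\int_{r_1}^{r_2} L(t)^2\,dt/t$ does produce a radius $t^\ast$ with $L(t^\ast)^2\le \area(D_2)/\textup{mod}(A)$. The gap is in the second half, exactly at the point you yourself flag as ``the main obstacle''. The reduction ``when $\textup{diam}(D_1)$ is of order $\textup{diam}(\p^{d-1}_\C)$ the lemma holds trivially for $c$ large'' is false: since $\min(1,\textup{mod}(A))\le 1$, the right-hand side is only bounded below by $c\,\area(D_2)$, and a long thin holomorphic sliver $D_1\Subset D_2$ can have $\textup{diam}(D_1)$ comparable to $\textup{diam}(\p^{d-1}_\C)$ while $\area(D_2)$ is arbitrarily small; in that regime the inequality is rescued only by the modulus in the denominator, i.e.\ it is precisely the non-trivial case. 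Moreover, even if one grants that $\textup{diam}(D_1)$ is small, nothing forces $D_2$ --- or even $\phi(\D(0,t^\ast))$ --- into a fixed compact piece of a single affine chart, which is what you need both for the uniform comparability of the Fubini--Study and Euclidean metrics and for the components of $\phi$ to be globally defined holomorphic functions on $\D(0,t^\ast)$. The maximum-principle step therefore presupposes the very containment it is meant to establish.

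The missing ingredient, which is how the cited appendix of Briend--Duval actually concludes (the present paper does not reprove the lemma, it imports it), is the Lelong--Bishop lower bound for the area of analytic subsets of a ball. Writing $\ell=L(t^\ast)$ and $V=\phi(\D(0,t^\ast))\supset D_1$, one has $\gamma_{t^\ast}\subset B(p,\ell)$ for some $p$; if some $q\in V$ satisfied $d(p,q)>2\ell$, then for $s<\min\{d(p,q)-\ell,\varepsilon_0\}$ the component of $V\cap B(q,s)$ through $q$ would be a closed analytic curve of $B(q,s)$ (it cannot reach the boundary curve $\gamma_{t^\ast}$), whence $\area(D_2)\ge\area(V)\ge c_1 s^2$. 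Combining this with the case $V\subset B(p,2\ell)$ gives $\textup{diam}(D_1)^2\lesssim \ell^2+\area(D_2)\lesssim \area(D_2)/\min(1,\textup{mod}(A))$. Your chart-plus-maximum-principle substitute for this step does not work as written, and the proof cannot be completed without some such global area lower bound for holomorphic curves in $\p^{d-1}_\C$.
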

For each disk $\D_{c,a}\in \mathcal{B}_k$ we conclude that 
$$d_\om ( \phi_{c,a}(1) , \phi_{c,a}(0) ) \le K \e$$ with
$K\pe  \sqrt{c} / \min \{1, \log |w_j|^{-1/4}\}$, where $d_\om$ denotes the distance computed with respect to the k\"ahler form $\om$.
We  conclude the proof in the following way. Let $\varphi:\C^{d-1}\longrightarrow\R$ be a smooth function with compact support and write  $N_k \pe \card ( \Per^*(\um_k,\uw[j]) ) = \card (\Per^*(\um_k,\uw[j+1]))$ to simplify notation. Then we have
\begin{eqnarray*}
\left| \int \varphi\, \mu_{k,j+1}-\int \varphi \,\mu_{k,j}  \right|
& = & 
\frac1{N_k}
\left| \sum_{\Per^*(\um_k,\uw[j+1])}\varphi-\sum_{\Per^*(\um_k,\uw[j])}\varphi\right|
\\
& = & 
\frac1{N_k}
\left| \sum_{(c,a) \in \Per^*(\um_k,\uw[j])}\varphi (\phi_{c,a}(1) )  - \varphi (\phi_{c,a}(0) )\right|
\\
& \le  & 
\frac1{N_k}
 \sum_{\mathcal{B}_k}\left|\varphi (\phi_{c,a}(1) )  - \varphi (\phi_{c,a}(0) )\right|
+ 
\frac1{N_k}
\,  (2 \sup |\varphi| \e N_k)
\\
& \le  & 
\frac1{N_k}
(|\varphi|_{\mathcal{C}^1}\e N_k)
+ 
2 \sup |\varphi| \e  \le 3\e |\varphi|_{\mathcal{C}^1}~.
\end{eqnarray*}
Since $\e$ can be chosen arbitrarily small, we have $\mu_{k,j+1}-\mu_{k,j}\to0$, as required.


\bibliographystyle{short}
\bibliography{biblio}

\begin{thebibliography}{GHT}

\bibitem[B]{bsurvey}
Fran\c{c}ois Berteloot.
\newblock Bifurcation currents in holomorphic families of rational maps, 2011.
\newblock CIME Lecture Notes Springer To appear.

\bibitem[BB1]{BB1}
Giovanni Bassanelli and Fran{\c{c}}ois Berteloot.
\newblock Bifurcation currents in holomorphic dynamics on {$\mathbb{P}^k$}.
\newblock {\em J. Reine Angew. Math.}, 608:201--235, 2007.

\bibitem[BB2]{BB3}
Giovanni Bassanelli and Fran{\c{c}}ois Berteloot.
\newblock Lyapunov exponents, bifurcation currents and laminations in
  bifurcation loci.
\newblock {\em Math. Ann.}, 345(1):1--23, 2009.

\bibitem[BB3]{BB2}
Giovanni Bassanelli and Fran{\c{c}}ois Berteloot.
\newblock Distribution of polynomials with cycles of a given multiplier.
\newblock {\em Nagoya Math. J.}, 201:23--43, 2011.

\bibitem[BD1]{BD}
Matthew Baker and Laura Demarco.
\newblock Special curves and postcritically-finite polynomials, 2012.
\newblock preprint arXiv : math.DS/1211.0255.

\bibitem[BD2]{briendduval2}
Jean-Yves Briend and Julien Duval.
\newblock Deux caract\'erisations de la mesure d'\'equilibre d'un endomorphisme
  de {${\rm P}^k(\bold C)$}.
\newblock {\em Publ. Math. Inst. Hautes \'Etudes Sci.}, (93):145--159, 2001.

\bibitem[BE]{buffepstein}
Xavier Buff and Adam~L. Epstein.
\newblock Bifurcation measure and postcritically finite rational maps.
\newblock In {\em Complex dynamics : families and friends / edited by Dierk
  Schleicher}, pages 491--512. A K Peters, Ltd., Wellesley, Massachussets,
  2009.

\bibitem[BFH]{BFH}
Ben Bielefeld, Yuval Fisher, and John Hubbard.
\newblock The classification of critically preperiodic polynomials as dynamical
  systems.
\newblock {\em J. Amer. Math. Soc.}, 5(4):721--762, 1992.

\bibitem[BG1]{Mod2}
Fran\c{c}ois Berteloot and Thomas Gauthier.
\newblock On the geometry of bifurcation currents for quadratic rational maps,
  2012.
\newblock preprint.

\bibitem[BG2]{Article2}
Xavier Buff and Thomas Gauthier.
\newblock Pertubations of flexible {L}att\`es maps.
\newblock to appear in \textit{Bull. Soc Math. France,} 2011.

\bibitem[BH1]{baker-hsia}
M.~Baker and L.~H. H'sia.
\newblock Canonical heights, transfinite diameters, and polynomial dynamics.
\newblock {\em J. Reine Angew. Math}, (585):61--92, 2005.

\bibitem[BH2]{BH}
Bodil Branner and John~H. Hubbard.
\newblock The iteration of cubic polynomials. {I}. {T}he global topology of
  parameter space.
\newblock {\em Acta Math.}, 160(3-4):143--206, 1988.

\bibitem[BT]{BedfordTaylor}
Eric Bedford and B.~A. Taylor.
\newblock The {D}irichlet problem for a complex {M}onge-{A}mpere equation.
\newblock {\em Bull. Amer. Math. Soc.}, 82(1):102--104, 1976.

\bibitem[CG]{carleson}
Lennart Carleson and Theodore~W. Gamelin.
\newblock {\em Complex dynamics}.
\newblock Universitext: Tracts in Mathematics. Springer-Verlag, New York, 1993.

\bibitem[CL1]{ACL}
Antoine Chambert-Loir.
\newblock Mesures et \'equidistribution sur les espaces de {B}erkovich.
\newblock {\em J. Reine Angew. Math.}, 595:215--235, 2006.

\bibitem[CL2]{ACL2}
Antoine Chambert-Loir.
\newblock Heights and measures on analytic spaces. {A} survey of recent
  results, and some remarks.
\newblock In {\em Motivic integration and its interactions with model theory
  and non-{A}rchimedean geometry. {V}olume {II}}, volume 384 of {\em London
  Math. Soc. Lecture Note Ser.}, pages 1--50. Cambridge Univ. Press, Cambridge,
  2011.

\bibitem[De]{DeMarco1}
Laura DeMarco.
\newblock Dynamics of rational maps: a current on the bifurcation locus.
\newblock {\em Math. Res. Lett.}, 8(1-2):57--66, 2001.

\bibitem[Du]{dsurvey}
Romain Dujardin.
\newblock Bifurcation currents and equidistribution on parameter space, 2011.
\newblock preprint.

\bibitem[DF]{favredujardin}
Romain Dujardin and Charles Favre.
\newblock Distribution of rational maps with a preperiodic critical point.
\newblock {\em Amer. J. Math.}, 130(4):979--1032, 2008.

\bibitem[E]{epstein2}
Adam Epstein.
\newblock Transversality in holomorphic dynamics, 2009.
\newblock preprint.

\bibitem[F]{Fisher}
Yuval Fisher.
\newblock The classification of critically preperiodic polynomials, 1989.
\newblock {P}h{D} {T}hesis, {C}ornell {U}niversity.

\bibitem[FRL]{FRL}
C.~Favre and J.~Rivera-Letelier.
\newblock Equidistribution quantitative des points de petite hauteur sur la
  droite projective.
\newblock {\em Math. Ann.}, 335(2):311--361, 2006.

\bibitem[G]{Article1}
Thomas Gauthier.
\newblock Strong bifurcation loci of full {H}ausdorff dimension.
\newblock {\em Ann. Sci. \'Ec. Norm. Sup\'er. (4)}, 45(6):947--984, 2012.

\bibitem[GHT]{GHT}
Dragos Ghioca, Liang-Chung H'sia, and Thomas Tucker.
\newblock Preperiodic points for families of rational maps.
\newblock Preprint, 2012.

\bibitem[HS]{Silvermandiophantine}
Marc Hindry and Joseph~H. Silverman.
\newblock {\em Diophantine geometry}, volume 201 of {\em Graduate Texts in
  Mathematics}.
\newblock Springer-Verlag, New York, 2000.
\newblock An introduction.

\bibitem[I]{Ingram}
Patrick Ingram.
\newblock A finiteness result for post-critically finite polynomials.
\newblock {\em Int. Math. Res. Not. IMRN}, (3):524--543, 2012.

\bibitem[K]{kiwi-portrait}
Jan Kiwi.
\newblock Combinatorial continuity in complex polynomial dynamics.
\newblock {\em Proc. London Math. Soc. (3)}, 91(1):215--248, 2005.

\bibitem[Lev1]{Levin4}
G.~M. Levin.
\newblock Bifurcation set of parameters of a family of quadratic mappings.
\newblock In {\em Approximate methods for investigating differential equations
  and their applications}, pages 103--109. Ku\u\i byshev. Gos. Univ.,
  Kuybyshev, 1982.

\bibitem[Lev2]{Levin1}
Genadi Levin.
\newblock On the theory of iterations of polynomial families in the complex
  plane.
\newblock {\em J. Soviet Math.}, 52(6):3512--3522, 1990.

\bibitem[Lev3]{Levin2}
Genadi Levin.
\newblock Multipliers of periodic orbits in spaces of rational maps.
\newblock {\em Ergodic Theory Dynam. Systems}, 31(1):197--243, 2011.

\bibitem[Lev4]{Levin3}
Genadi Levin.
\newblock Perturbations of weakly expanding critical orbits, 2011.
\newblock preprint arXiv math.DS/1111.6270.

\bibitem[Lev5]{Levy}
Alon Levy.
\newblock An algebraic proof of thurston's rigidity for a polynomial, 2012.
\newblock preprint arXiv : math.AG/1201.1969.

\bibitem[M]{McMullen}
Curtis~T. McMullen.
\newblock {\em Complex dynamics and renormalization}, volume 135 of {\em Annals
  of Mathematics Studies}.
\newblock Princeton University Press, Princeton, NJ, 1994.

\bibitem[P]{Poirier}
Alfredo Poirier.
\newblock On postcritically finite polynomials, part 2: Hubbard trees., 1993.
\newblock preprint of the IMS -- ims93-7.

\bibitem[S]{Silverman}
Joseph~H. Silverman.
\newblock {\em The arithmetic of dynamical systems}, volume 241 of {\em
  Graduate Texts in Mathematics}.
\newblock Springer, New York, 2007.

\bibitem[Y]{Yuan}
Xinyi Yuan.
\newblock Big line bundles over arithmetic varieties.
\newblock {\em Invent. Math.}, 173(3):603--649, 2008.

\end{thebibliography}

\end{document}